\newtheorem{theorem}{Theorem}
\newtheorem{lemma}[theorem]{Lemma}
\newtheorem{definition}[theorem]{Definition}
\newtheorem{proposition}[theorem]{Proposition}
\numberwithin{theorem}{section}
\numberwithin{equation}{section}
\def\N {{\mathbb N}}
\def\Z {{\mathbb Z}}
\def\R {{\mathbb R}}
\def\Rd {{\mathbb R}^{d}}
\def\C {{\mathbb C}}
\def\D {{\mathcal D}}
\DeclareMathOperator{\diff}{d\!}
\DeclareMathOperator{\supp}{supp}
\DeclareMathOperator{\dist}{dist}
\DeclareMathOperator{\sgn}{sgn}
\DeclareMathOperator{\diam}{diam}
\DeclareMathOperator{\pv}{p.v.}
\DeclareMathOperator{\Mod}{Mod}
\DeclareMathOperator{\SUM}{sum}
\DeclareMathOperator{\BDR}{bdr}
\DeclareMathOperator{\COR}{cor}
\DeclareMathOperator{\TOP}{top}
\DeclareMathOperator{\FULL}{full}
\DeclareMathOperator{\op}{op}
\DeclareMathOperator{\FT}{FT}
\DeclareMathOperator{\HL}{HL}
\DeclareMathOperator{\I}{I}
\DeclareMathOperator{\II}{II}
\DeclareMathOperator{\III}{III}
\DeclareMathOperator{\IV}{IV}
\DeclareMathOperator{\V}{V}
\def\<{\left\langle}
\def\>{\right\rangle}
\providecommand{\trm}[1]{\textrm{#1}}
\providecommand{\abs}[1]{ \left \lvert  #1 \right \rvert }
\providecommand{\no}[1]{  \lVert  #1  \rVert }
\providecommand{\nos}[1]{ \left \lVert  #1 \right \rVert }
\providecommand{\la}[1]{  \langle #1 \rangle}
\title[Uniform bounds]{Uniform bounds for 
bilinear symbols with linear K-quasiconformally 
embedded singularity}
\author{Marco Fraccaroli}
\author{Olli Saari}
\author{Christoph Thiele}
\address{Marco Fraccaroli, BCAM - Basque Center for Applied Mathematics, 
    Mazarredo, 14 E48009 Bilbao, 
    Basque Country, Spain.}
\email{mfraccaroli@bcamath.org}
\address{Olli Saari, Departament de Matem\`atiques, 
	Universitat Polit\`ecnica de Catalunya,
	Avinguda Diagonal 647, 08028 Barcelona,
	Catalunya, Spain}
\address{Centre de Recerca Matem\`atica, Edifici C, Campus Bellaterra, 08193 Bellaterra, Catalunya, Spain}
\email{olli.saari@upc.edu}
\address{Christoph Thiele, Mathematical Institute, 
	University of Bonn,
	Endenicher Allee 60, 53115 Bonn,
	Germany}
\email{thiele@math.uni-bonn.de}
\subjclass[2020]{42B15, 42C15}
\keywords{Phase space localization, time-frequency analysis, modulation invariant operators, uniform estimates}
\newcommand{\changelocaltocdepth}[1]{%
  \addtocontents{toc}{\protect\setcounter{tocdepth}{#1}}%
  \setcounter{tocdepth}{#1}%
}
\begin{document}
\begin{abstract}
We prove bounds in the strict local $L^{2}(\R^{d})$ range 
for trilinear Fourier multiplier forms with a $d$-dimensional singular subspace. 
Given a fixed parameter $K \ge 1$, 
we treat multipliers with non-degenerate singularity that are push-forwards by $K$-quasiconformal matrices 
of suitable symbols.
As particular applications,
our result recovers the uniform bounds for the one-dimensional bilinear Hilbert transforms in the strict local $L^{2}$ range,
and it implies the uniform bounds for two-dimensional bilinear Beurling transforms,
which are new,
in the same range.
\end{abstract}

\maketitle

\tableofcontents

\section{Introduction}

Let $d \ge 1$ and 
let $\Gamma_0$ be the linear
subspace of $\R^{3\times d}$
that consists of all vectors $(\xi_1,\xi_2,\xi_3)$ with
$\xi_1+\xi_2+\xi_3=0$.
Trilinear Fourier multiplier forms on $\Gamma_0$ 
are studied in order to understand mapping properties 
of bilinear Fourier multiplier operators on $\R^{d}$.
In the present paper,
we prove bounds in the strict local $L^{2}$ range for multipliers whose singular set can be written as an image of the $d$-dimensional diagonal of $\R^{3\times d}$ under a block $K$-quasiconformal matrix.
Our bounds depend on the matrix through the parameter $K$ alone, in this sense we prove bounds uniform
in isotropic dilations and rotations. 
We comment more on the motivation for such bounds after stating the main result.
 
We normalize the Fourier transform of a Schwartz function as 
\[\widehat{f}(\xi)=\int_{\R^d} f(x) e^{-2\pi i x \cdot \xi} \, \diff x 
.\]
Let $1 < p < \infty$. 
We denote the $L^{p}$ norm of a measurable function by 
\[ \no{f}_p^p \coloneq \int_{\R^d} \abs{ f(x) }^p \, \diff x.\]
Let $K \ge 1$.
A linear map
\[
L = L_1 \oplus L_2\oplus L_3 \]
mapping $\R^{3 \times d}$ to itself 
 is said to be block $K$-quasiconformal if 
for all $n \in \{1,2,3\}$ we have $L_n:\R^d\to \R^d$
and \[ 
\no{L_n}_{\op}^{d} \le K \det L_n.  
\]
 We say that $L$ is non-trivial if additionally 
\[
L_1  + L_2 + L_3 = 0.
\]

\begin{theorem}
\label{main-theorem}
Let $d \ge 1$, $K \ge 1$ and 
\[
\frac{1}{p_1}+\frac{1}{p_2}+\frac{1}{p_3} = 1, \quad 2 < p_1,p_2,p_3 < \infty.
\]
There exists a constant $C = C (d,K,p_1,p_2,p_3)$ such that the following holds.

Let $m:\R^{3 \times d} \to \C$ satisfy 
\begin{equation}
\label{e:tildesymbol}
\abs{ \partial_1^{\gamma_1}\partial_2^{\gamma_2}\partial_3^{\gamma_3} m(\xi) } \le \sup\{  \abs{ \xi - (\tau,\tau,\tau) }^{- \abs{ \gamma }} \colon \tau \in \Rd\}
\end{equation}
for all $\gamma \in \N^{3 \times d}$ with $\abs{ \gamma } \le 100d$.
Let $L$ be a non-trivial block $K$-quasiconformal matrix.
Define  
\[
\Lambda_m(f_1,f_2,f_3) = 
\int_{\R^{3 \times d}} \delta_0(\xi_1+\xi_2+\xi_3) \widehat{f}_1(\xi_1)\widehat{f}_2(\xi_2)\widehat{f}_3(\xi_3) m(L^{-1}\xi) \, \diff \xi
\]
where $\delta_0$ is the Dirac mass at origin.

Then for all triples of Schwartz functions $f_1$, $f_2$ and $f_3$ on $\R^{d}$
\[
\abs{ \Lambda_m(f_1,f_2,f_3) } \le C \prod_{n=1}^{3} \no{f_n}_{p_n}.
\] 
\end{theorem}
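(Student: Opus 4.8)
The plan is to discretize $\Lambda_m$ into a Lacey--Thiele-type model sum and run the standard time--frequency organization, taking care that every constant depends on $L$ only through $K$, and otherwise only on $d$ and the $p_n$.

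I would first record the symmetries. The estimate claimed is unchanged under simultaneous isotropic dilation and rotation of the three copies of $\R^d$ and under simultaneous modulation $f_n(x)\mapsto e^{2\pi i\eta_n\cdot x}f_n(x)$ with $(\eta_1,\eta_2,\eta_3)\in L\Delta$, where $\Delta=\{(\tau,\tau,\tau)\colon\tau\in\Rd\}$; each of these operations preserves the class \eqref{e:tildesymbol} and the block quasiconformality of $L$, and together they reduce matters to a convenient normal form for $L$. The presence of this large symmetry group --- translations and modulations along the $d$-dimensional subspace $L\Delta$, plus dilations and rotations --- is what excludes ordinary Calder\'on--Zygmund theory and forces a phase-space argument. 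Condition \eqref{e:tildesymbol} says exactly that $m$ is a Mikhlin--H\"ormander symbol with singular set $\Delta\subset\R^{3\times d}$; since $L$ is non-trivial, $L\Delta$ lies inside the $2d$-dimensional hyperplane $\Gamma_0=\{\xi_1+\xi_2+\xi_3=0\}$, and $\xi\mapsto m(L^{-1}\xi)$ restricts to $\Gamma_0\cong\R^{2d}$ as a symbol that is smooth away from, and scale-invariantly controlled relative to, the $d$-dimensional subspace $L\Delta$. The quasiconformality is used in two ways, which I would isolate at the outset: $\no{L_n}_{\op}^d\le K\det L_n$ forces each $L_n$ to have condition number at most $K$, so all frequency cubes below are genuine cubes up to an eccentricity bounded by $K$; and it forces $L\Delta$ to sit in $\Gamma_0$ isotropically, so that although $L\Delta$ may come close to the coordinate subspaces $\{\xi_n=0\}$, it can do so only through an essentially one-parameter (scalar) degeneration with rate controlled by $K$ --- the multi-parameter degenerations that would break the analysis are excluded precisely by the quasiconformality rather than by mere invertibility of the $L_n$.

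Next I would Whitney-decompose $\Gamma_0\setminus L\Delta$ into pieces at dyadic distance $2^k$ from $L\Delta$ and, on each piece, expand the multiplier into wave packets: a superposition --- with multiplicities bounded in terms of $K$ --- of tensor bumps $\phi_1(\xi_1)\phi_2(\xi_2)\phi_3(\xi_3)$ with each $\phi_n$ adapted to a cube of side $\sim 2^k$, the triples organized into the standard separated (``rank one'') configuration relative to $L\Delta$, all geometric constants uniform in $K$. Expanding also the constraint $\delta_0(\xi_1+\xi_2+\xi_3)$ in space and summing over scales yields a representation
\[
\Lambda_m(f_1,f_2,f_3)=\sum_{\vec P\in\P}\frac{c_{\vec P}}{\abs{I_{\vec P}}^{1/2}}\prod_{n=1}^3\la{f_n,\phi_{P_n}}+\mathrm{Err},
\]
a sum over tritiles $\vec P=(P_1,P_2,P_3)$ with common spatial cube $I_{\vec P}\subset\R^d$, pairwise separated frequency cubes $\omega_{P_n}$ with $\abs{\omega_{P_n}}\,\abs{I_{\vec P}}\sim 1$, $L^2$-normalized wave packets $\phi_{P_n}$ with Schwartz tails uniform in $K$, and $\abs{c_{\vec P}}\lesssim 1$; the error term $\mathrm{Err}$ collects the smooth non-singular part of the multiplier and is controlled by routine non-modulation-invariant estimates. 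On the model sum I would run the classical organization, which in the local $L^2$ range $2<p_1,p_2,p_3<\infty$ is streamlined: define trees as collections of tritiles with a common top, the sizes $\size_n(\T)$ as $L^2$-based (BMO-type) averages of $f_n$ over the top, and the corresponding energies; prove the single-tree estimate $\abs{\sum_{\vec P\in\T}\cdots}\lesssim_{d,K}\abs{I_\T}\prod_n\size_n(\T)$ by stripping the common modulation and combining almost-orthogonality of the wave packets with Cauchy--Schwarz; prove the Bessel/energy inequality for the skewed, $K$-eccentric wave packets with a constant depending on $K$ but not on $L$; select trees by the usual greedy stopping-time argument on sizes; and sum. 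Since each $p_n$ exceeds $2$, every $\size_n$ is dominated pointwise by a maximal or square-function quantity bounded on $L^{p_n}$, so interpolating the single-tree bound against the tree count closes without endpoint difficulty and yields $\abs{\Lambda_m(f_1,f_2,f_3)}\le C(d,K,p_1,p_2,p_3)\prod_{n=1}^3\no{f_n}_{p_n}$ for Schwartz data.

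The main obstacle is uniformity, not bookkeeping. The combinatorial scheme is a dimensional extension of the one-dimensional Lacey--Thiele and uniform-bilinear-Hilbert-transform arguments and raises nothing new by itself; the difficulty is that as $L$ ranges over all block $K$-quasiconformal matrices the frequency cubes $\omega_{P_n}$ and the wave packets $\phi_{P_n}$ genuinely tilt and skew away from the axes, and one must verify that the almost-orthogonality underpinning both the single-tree estimate and the Bessel inequality survives with constants controlled by $K$ alone. This is the higher-dimensional, quasiconformal incarnation of what makes the uniform bounds for the bilinear Hilbert transform strictly harder than the fixed-singularity case, and it is precisely where the hypothesis $\no{L_n}_{\op}^d\le K\det L_n$, rather than mere non-degeneracy of $L\Delta$ in $\Gamma_0$, is indispensable.
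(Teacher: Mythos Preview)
Your architecture is broadly right and matches the paper's: Whitney decomposition of $\Gamma_0\setminus L\Delta$, reduction to a tensorized model sum over multitiles, organization into trees, size/energy bounds, and greedy selection. The paper also reduces via multilinear interpolation to a restricted weak-type estimate (testing on $f_n$ with $\no{f_n}_\infty\le 2$ and controlled $L^2$ norm), which you omit but which is routine.

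The genuine gap is at the single-tree estimate. You propose to ``strip the common modulation and combine almost-orthogonality of the wave packets with Cauchy--Schwarz,'' but this is precisely the step that fails to be uniform: when $L\Delta$ approaches a degenerate direction, the three frequency cubes of a multitile in a tree can have wildly different sizes relative to each other (governed by the $v_n=\log_2\no{L_n}_{\op}$), and the naive paraproduct-type tree estimate picks up a factor depending on the disparity, not just on $K$. You correctly flag this in your last paragraph as ``the main obstacle,'' but you supply no mechanism to overcome it. The paper's mechanism is the heart of the work and is not standard: each tree is split into a \emph{boundary} part $\mathcal{B}_T$ (multitiles whose frequency box sees the top frequency only in an outer annulus) and a \emph{core} part $\mathcal{P}_T\setminus\mathcal{B}_T$, and three distinct sizes $\Sigma^{\BDR}$, $\Sigma^{\SUM}$, $\Sigma^{\COR}$ are introduced. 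The boundary part is handled by a bare H\"older/Cauchy--Schwarz argument with no paraproduct content; the core part is handled by comparing each $f_n$ with a \emph{phase plane projection} $\Pi_{T,n}f_n$ constructed in the companion paper \cite{fraccaroli2022phase}, then invoking a global (non-tree) paraproduct estimate on the projections. The point is that the projection absorbs the spatial localization into an object with sharp frequency support near $(\xi_T)_n$, so the subsequent paraproduct estimate is genuinely uniform. This boundary/core dichotomy, and the phase-plane projection technology feeding the core estimate, are what your sketch is missing; without them the single-tree bound you state does not hold with a $K$-uniform constant.

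A secondary but related point: your selection step presumes a single size functional per function. The paper needs all three of $\Sigma^{\BDR}$, $\Sigma^{\SUM}$, $\Sigma^{\COR}$ in the selection, with separate almost-orthogonality lemmas for each (Propositions~\ref{prop:ao-core}--\ref{prop:ao-sum-lac}), and the sum-size selection requires a directional cone decomposition and a non-metric ordering of tree tops. This is where the uniform Bessel-type inequality actually lives, and it is not the ``usual greedy stopping-time argument on sizes.''
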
 
We use a symbol $m$ defined on all of $\R^{3 \times d}$
for convenience, 
but instead of that, 
a symbol only defined on $\Gamma_0$ 
with conditions stated using directional
differential operators within the space $\Gamma_0$ could be used as well. 
Similarly,
the use of the mapping $L$ in the definition of the form is a compact way to express 
a set of certain anisotropic symbol estimates on $m$ through the simple condition \eqref{e:tildesymbol}.
We point out that the restriction of Theorem~\ref{main-theorem}
to the strict local $L^2$ range is likely not to be sharp.
Moreover, we do not see any obvious obstruction for an analogy of our result for higher orders of multilinearity. The only missing ingredient for the latter seems to be a suitable generalization of the uniform paraproduct estimate as in \cite{MR1945289}.
However,
we did not attempt any of these extensions in order to keep
the technicalities in this paper more limited and 
have better focus on some of the key ideas of our approach. 
For related work in $d=1$ extending the range of exponents the bilinear Hilbert transform, 
see \cite{MR1933076}, \cite{MR2320411}, \cite{gennady_uniform},
\cite{MR2997005}, and \cite{MR3453362}. 

The simplest interesting special case of the Theorem~\ref{main-theorem} is $d=K=1$, 
when $L=(L_1,L_2,L_3)$ is a vector of nonzero real numbers adding up to $0$ 
and 
\[m(\xi_1,\xi_2,\xi_3)=\frac{L_1\xi_1+L_2\xi_2+L_3\xi_3}
{\sqrt{(L_1\xi_1+L_2\xi_2+L_3\xi_3)^2+(\xi_1+\xi_2+\xi_3)^2}},\]
which restricted to the hyperplane $\xi_1+\xi_2+\xi_3=0$ reads as
\[m(\xi_1,\xi_2,\xi_3)=   \sgn (L_1\xi_1+L_2\xi_2+L_3\xi_3).\]
In this case, 
$\Lambda_m$ is a scalar multiple of the trilinear form dual to the bilinear Hilbert transform,
which can be written on the spatial side as
\begin{equation}
\label{bhtwithms}  
 \pv \iint_{\R^{2}}
f_1(x+M_1t)f_2(x+M_2t) f_3(x+M_3t)\, \frac{\diff x \diff t}{t} ,
\end{equation} 
where $M=(M_1,M_2,M_3)$ is a unit vector perpendicular to
both $(1,1,1)$ and $L$. 
No two components of $M$
are equal, 
because no component of $L$ is zero.
This condition is referred to as non-degeneracy of $M$.
The case of
\eqref{bhtwithms} with two components of the unit vector $M$ equal is called degenerate. 
If for example $M_3=M_1$, we have 
\[\Lambda_m(f_1,f_2,f_3)
= \int_\R f_1(x)  f_3(x)\left[ \pv \int_\R
f_2(x+t) \, \frac{\diff t}{t} \right]\, \diff x. \]
One obtains $L^p$ bounds for this form by H\"older's inequality 
and bounds for the linear Hilbert transform.
Bounds for the nondegenerate case of the bilinear Hilbert transform require a different argument
and were shown in the exponent range of Theorem~\ref{main-theorem} in \cite{MR1491450}, 
albeit with constants blowing up as $M$ tends to a degenerate value. 
Bounds uniform in $M$ were later proven in \cite{MR2113017} for the first time.
These results are covered by Theorem~\ref{main-theorem}.

The simplest example of our main theorem which is new is the case
where $d=2$, $K=1$ and $(L_1,L_2,L_3)$ is a triple of conformal matrices adding up to zero. 
In this case, we identify $\R^2$ with $\C$
and view the application of the  matrices $L_n$ as multiplication by complex numbers.
Moreover, we set
\[m(\zeta_1,\zeta_2,\zeta_3)=
\frac{(\overline{L_1\zeta_1+L_2\zeta_2+L_3\zeta_3})^2}
{ \abs{ L_1\zeta_1+L_2\zeta_2+L_3\zeta_3 }^2+ \abs{ \zeta_1+\zeta_2+\zeta_3 }^2.}
\]
Similar computations as for the bilinear Hilbert transform identify
$\Lambda_m$ as a scalar multiple of what one might call the bilinear Beurling transform
\[ \pv \iint_{\C^{2}} f_1(z+M_1\zeta)f_2(z+M_2\zeta) f_3(z+M_3\zeta) \, \frac{\diff A(z) \diff A(\zeta)}{\zeta^{2}} \]
where $A$ denotes the area measure.
Thus our main theorem implies $L^p$ bounds in the strictly locally $L^2$
range for the bilinear Beurling transform uniformly in $M$.
The Beurling kernel $\zeta^{-2}$ can be replaced by any standard Calder\'on--Zygmund kernel 
arising from a Mikhlin multiplier.

In dimension $d=1$, the cases for $L$ allowed in Theorem
\ref{main-theorem} together with a small number of easily understood degenerate cases provide an exhaustive picture of all cases of $L$.
The situation in higher dimensions is more complicated.
There are completely non-degenerate cases, 
completely degenerate cases in the sense that $L_n=0$ for some $n$, 
and further there is a zoo of distinct cases that one may call partially degenerate. 
For fixed $K$, 
our main theorem proves uniform bounds for the non-degenerate cases as 
one approaches the completely degenerate cases inside a cone 
that stays away from the partially degenerate cases. 
Within the conformal context,
our theorem covers all cases including the degenerate ones.
In this respect,
we show that the setting of one complex dimension is quite analogous 
to the setting of one real dimension.

Concerning the general case,
a list, not exhaustive, 
of five partially degenerate cases for $d=2$ were described in \cite{MR2597511},
and four of them were shown to be bounded,
albeit without any attempt to prove uniform bounds.
The remaining one,
called the twisted paraproduct,
was later treated in \cite{MR2990138} 
(see also \cite{MR3161332} for preliminary results 
and \cite{MR3488377,MR3683098} for further work).
A further partially degenerate case is the triangular Hilbert transform described in \cite{MR3482272}, where one dimension of the kernel is integrated out 
because it projects to zero in the arguments of all functions.
The triangular Hilbert transform is not known to satisfy any $L^p$ bounds, 
and it is well-understood that presently known techniques 
are insufficient to obtain such bounds.
A version of Theorem~\ref{main-theorem} with uniformity in $K$,
as opposed to our assumption on $K$ being fixed,
would imply bounds for the triangular Hilbert transform.
Bounds for the triangular Hilbert transform as well as some of the known bounds for other partially degenerate cases in $d=2$
would, in turn, 
imply bounds for the so-called Carleson operator in the corresponding $L^p$ spaces,
see \cite{MR199631}, \cite{MR340926} and \cite{MR0238019}. 
A more systematic classification of the partially degenerate cases appears in \cite{warchalski2019uniform}, 
where also some uniform bounds are proven in a discrete model.  

The main technical novelty of the current work is 
the application of our previous work \cite{fraccaroli2022phase},
where we improved and extended the method of phase plane projections, 
previously studied in \cite{MR1979774} in dimension one, 
to higher dimensions.
In order to apply the set-up introduced in \cite{fraccaroli2022phase},
we have to reformulate the standard phase space decomposition of the form $\Lambda_m$ in a new way.
Unlike the existing literature using either stopping times and outer measures, see \cite{MR3312633},
or a tree-selection algorithm with various size functionals acting on families of multitiles,
see \cite{MR1491450}, \cite{MR1933076} and \cite{MR2113017};
our proof arranges the tree-selection in a different way.
In particular,
unlike our main inspiration \cite{MR1979774},
we put emphasis on choosing the top intervals and top frequencies 
and let them define regions in phase space,
the trees.
Each tree, a region in the phase space, 
is then divided into a boundary and a core.
The treatise of these two parts can be separated into two independent modules.
The estimation of the boundaries is completely independent of paraproduct theory of any kind, 
just invoking H\"older's inequality.
The estimation of the cores in turn
relies on two real analysis lemmas, 
one on paraproduct estimates and one on phase space localization,
which are stand-alone results 
that do not make any explicit reference to the notion of a tree.
Clarifying the roles of the core part and the boundary part of a tree 
is the main insight we are communicating. 
Later,
at the level of tree selection,
we further notice that almost all non-trivial phase space interaction of the selected trees is encoded in their boundary parts. 
Summing up,
while the paraproduct theory of boundaries is very simple and that of cores more complicated,
the orders of complexity are swapped when carrying out the tree selection.
 
We close the introduction commenting a bit more on the background context of the study uniform bounds for multilinear operators. 
On one hand,
one may use uniform bounds over parametrized families of singular operators 
to conclude bounds for superpositions of these operators as the parameter varies. 
While integrable rather uniform dependence on the parameter may suffice for this purpose in some applications, 
even integrable dependence may need more work than the basic non-uniform bounds.
We refer to \cite{MR3231215}, \cite{MR3255002} and \cite{MR3293439} for a discussion about connections to Calder\'on commutators and the Cauchy integral over Lipschitz curves, 
as the original motivation for studying the bilinear Hilbert transforms. 
Secondly, multilinear forms whose multipliers are
characteristic functions of convex sets $E$ are closely related to uniform bounds for multipliers which are characteristic 
functions of half planes relative to tangent lines of $E$.
This connection appears in \cite{MR2701349}, \cite{MR2197068}, \cite{MR3000982}, \cite{MR2413217}, \cite{MR4566711},
and \cite{MR3337797}.

Finally,
we describe the structure of the present paper.
Section \ref*{sec:out} contains the outline of the proof of Theorem \ref{main-theorem},
which is organized into four propositions.
These principal propositions are proved in Sections \ref{s:globaltreeprop}, \ref{s:tree-estimate}, \ref{s:almostorthogonal} and \ref{s:whitneyprop}, one proposition in each section.
Theorem \ref{main-theorem} is deduced from the contents of the outline Section \ref{sec:out}
in Section \ref{s:main-theorem}.
Sections \ref{s:globaltreeprop}-\ref{s:whitneyprop}
are independent of each other 
and only make reference to Section \ref{sec:out}.
Section \ref{s:main-theorem} depends on arguments in Sections \ref{s:globaltreeprop}-\ref{s:whitneyprop} only through the propositions stated in Section \ref{sec:out}.
Section \ref{s:almostorthogonal} is slightly longer than its siblings,
and it is divided further into an outline part and 
five further numbered subsections,
which only refer to Section \ref{sec:out} and the overview part of Section \ref{s:almostorthogonal}. 
The sections that are a part of the logical scheme just described appear in the table of contents
while subtitles beyond the logical scheme do not appear there. 

\bigskip

\noindent
\textit{Acknowledgement.}
The authors were funded by the Deutsche Forschungsgemeinschaft (DFG, German Research Foundation) 
under the project numbers 390685813 (EXC 2047: Hausdorff Center for Mathematics) and 211504053 (CRC 1060: Mathematics of Emergent Effects).
The first author was supported by the Basque Government through the BERC 2022-2025 program and by the Ministry of Science and Innovation: BCAM Severo Ochoa accreditation CEX2021-001142-S / MICIN / AEI / 10.13039/501100011033.
The second author was supported by Generalitat de Catalunya (2021 SGR 00087), 
Ministerio de Ciencia e Innovaci\'on and the European Union -- Next Generation EU (RYC2021-032950-I),  (PID2021-123903NB-I00) and the Spanish State Research Agency 
through the Severo Ochoa and María de Maeztu Program for Centers and Units of Excellence in 
R\&D (CEX2020-001084-M).

\section{Outline of the proof} 
\label{sec:out}
We fix the dimension $d\ge 1$, 
dilation parameters $k_2 > k_1 > k_0 \ge 3$
with $k_i - k_j > 100d$ for $0\le j< i \le 2$
and the triple of exponents $(p_1,p_2,p_3)$ 
satisfying 
\[
\frac{1}{p_1}+\frac{1}{p_2}+\frac{1}{p_3} = 1, \quad 2 < p_1,p_2,p_3 < \infty.
\]
Let
\begin{equation*}\label{defineepsilon}
\varepsilon= \min \{ p_1 - 2, p_2-2, p_3 - 2 \}.
\end{equation*}
In addition,
we fix a number $\alpha > 2d$, $\alpha < 8 d$.
We further fix linear maps $L_1$, $L_2$ and $L_3$ as in Theorem~\ref{main-theorem}. 
For $n\in \{1,2,3\}$, 
we choose $v_n \in \Z$ such that 
\[
2^{v_n-1} <  \no{L_n}_{\op} \le 2^{v_n}.
\] 
Fix an index $n_*\in \{1,2,3\}$ such that
\begin{equation}
v_{n_*}=\min \{ v_1, v_2, v_3 \}.
\end{equation}
As the condition \eqref{e:tildesymbol} is invariant under scaling $\xi \mapsto \lambda\xi$, 
we may assume that $v_{n_*}=0$.

Denote by $B(x,r)$ the open ball centered at $x \in \Rd$ and with radius $r$.
For $\xi \in \R^{3\times d}$, $r>0$, and $n \in \{1,2,3\}$, 
define $Q_n(\xi,r) \subset \R^{d}$ to be the minimal open rectangular box with sides parallel to the coordinate axes
containing $B(\xi_n, 2^{v_n}r)$.
Let 
\[
Q(\xi,r) = Q_1(\xi,r)  \times Q_2(\xi,r)  \times Q_3(\xi,r) .
\]
Let
\[
\Gamma = \{ (L_1\tau,L_2\tau,L_3\tau) \colon \tau \in \R^{d} \}.
\]

Let $\mathcal{W}$ be a maximal set of pairwise disjoint 
rectangles of the form $Q(\xi,2^{-j})$ with  $\xi \in \R^{3\times d}$ and $j\in \Z$  with
\begin{equation*}\label{whitney1}
\Gamma \cap  Q(\xi, 2^{k_0-j})=\emptyset
\end{equation*}
and \begin{equation*}\label{whitney2}
\Gamma\cap Q(\xi, 2^{k_0+1-j}) \neq \emptyset.  
\end{equation*} 
For all $N > 0$ let $\mathcal{W}_N$ be the finite subset of $\mathcal{W}$ defined by
\begin{equation*}
    \mathcal{W}_N = \{ Q(\xi,2^{-j}) \in \mathcal{W} \colon \abs{\xi}, \abs{j} \leq N \} .
\end{equation*}

For a cube with sides parallel to the coordinate axes $I \subset \R^d$, define  the mollified distance $\rho_I$ by
\begin{equation*}\label{definerhoi}
\rho_I(x)=\inf\{r>1 \colon x\in (2r-1)I\},
\end{equation*}
where $aI$ denotes the cube with same center as $I$ and $a$ times the side-length. Moreover, for a Borel set $F \subset \R^d$, define
\begin{equation*}\label{definerhoi_2}
\rho_I(F)=\inf\{ \rho_I(x) \colon x \in F \}.
\end{equation*}

\begin{definition}[Frequency cut-offs]
\label{def:freqsupp}

Let $E \subset \R^{3 \times d}$ be bounded with open interior.  Define $\Phi_{n}^{\alpha}(E)$ to be the set of continuous complex valued functions $\phi$ on $\Rd$ with 
\[\abs{ \phi(x) }\le  2^{(v_n-j)d} \rho_{[0,2^{j-v_n})^{d}}^{-\alpha}(x)  \]
for all $x\in \Rd$ and 
\[\supp \widehat{\phi} \subset \{\xi_n:\xi \in E \},\]
where $j\in \Z$ is maximal such that
there exists $\xi \in \R^{3 \times d}$ with $E \subset Q(\xi,2^{-j})$.

\end{definition}

In Section~\ref{s:main-theorem},
Theorem~\ref{main-theorem} 
is reduced to Proposition~\ref{whitneyprop} below,
where the multiplier is replaced by a sum of tensor multipliers.

\begin{proposition}[Weak estimate for tensor model]
\label{whitneyprop}
Let 
\begin{equation*}
\frac{1}{q_1}+\frac{1}{q_2}+\frac{1}{q_3} = 1, \quad 2 < q_1,q_2,q_3 < \infty.
\end{equation*}
There exists a constant $C = C(d,\alpha,k_0,q_1,q_2,q_3)$ such that the following holds.

For $Q \in {\mathcal{W}}$ and $n\in \{1,2,3\}$, 
let $\phi_{Q,n} \in \Phi_{n}^{4\alpha}(Q)$. 
For each $n\in \{1,2,3\}$ let $f_n \in L^2(\R^d)$ be a function
such that
\begin{equation*}\label{l2normalized}
\no{f_n}_{\infty} \le 2. 
\end{equation*}
Then, for all $N>0$,
\begin{equation}\label{e:tensorest}
\abs{ \sum_{ Q \in \mathcal{W}_N}
  \int_{\R^d} \prod_{n=1}^3 [ \phi_{Q,n} * {f}_n (x)] \, \diff x }
  \le C \prod_{n=1}^3 \no{f_n}^{2/q_n}_{2} 
  .
\end{equation}

\end{proposition}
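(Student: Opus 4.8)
The plan is to prove Proposition~\ref{whitneyprop} by a standard time-frequency decomposition of the tensor form into wave packets, followed by a tree-selection algorithm. First I would expand each convolution $\phi_{Q,n} * f_n$ into a superposition of wave packets adapted to tiles whose frequency support lies in $Q_n(\xi,2^{-j})$ and whose spatial scale is comparable to $2^{j-v_n}$. Because the rectangles in $\mathcal{W}$ are Whitney with respect to the singular subspace $\Gamma$ --- they are separated from $\Gamma$ by a controlled multiple of their size but come within a bounded multiple --- the three frequency boxes $Q_1(\xi,2^{-j})$, $Q_2(\xi,2^{-j})$, $Q_3(\xi,2^{-j})$ together with the constraint $\xi_1+\xi_2+\xi_3 = 0$ behave like a multitile: the sum of the three frequency locations stays at scale comparable to the box size, so the product $\prod_n \phi_{Q,n}*f_n$ has spatial integral that is genuinely trilinear and not a paraproduct degeneracy. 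The key geometric input is that, after normalizing $v_{n_*}=0$, the boxes $Q(\xi,2^{-j})$ with the two Whitney conditions form a bounded-overlap cover of a neighborhood of $\Gamma$, and each such box corresponds to a single scale.

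Next I would reduce \eqref{e:tensorest} to a sum over trees. A tree here is a collection of rectangles $Q \in \mathcal{W}_N$ whose frequency boxes are nested in a common top datum (a top cube $I_T$ in space and a top frequency triple on $\Gamma$). For a single tree I expect the estimate
\[
\Bigl| \sum_{Q \in T} \int_{\R^d} \prod_{n=1}^3 [\phi_{Q,n}*f_n(x)] \, \diff x \Bigr| \le C |I_T| \prod_{n=1}^3 \sup_{Q \in T}\bigl( \text{size-type quantity of } f_n \text{ on } T\bigr),
\]
which is the content of a single-tree estimate proved by splitting $T$ into a boundary part and a core part, as the introduction advertises: the boundary part is handled by Hölder's inequality alone, and the core part by the paraproduct lemma and the phase-space localization lemma. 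Then I would run a tree-selection (greedy) algorithm: for each $n$ and each dyadic size level $2^{-\ell}$, select a maximal disjoint collection of trees on which the relevant size functional of $f_n$ exceeds $2^{-\ell}$, prove a Bessel-type counting bound $\sum_{T \text{ selected at level } \ell} |I_T| \lesssim 2^{2\ell} \|f_n\|_2^2$, and sum the geometric series over $\ell$, balancing the three indices $n$ against one another using $1/q_1+1/q_2+1/q_3=1$ and the normalization $\|f_n\|_\infty \le 2$ (so that sizes are bounded, which truncates the sum over $\ell$ at the top and makes it converge). The exponents $2/q_n$ on the right of \eqref{e:tensorest} arise exactly from interpolating the trivial $L^\infty$ bound against the Bessel $L^2$ counting bound in the tree selection.

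The main obstacle I expect is the tree-selection counting step: proving that the spatial measures $|I_T|$ of the selected trees at a fixed size level sum to at most a constant times $2^{2\ell}\|f_n\|_2^2$. This requires an almost-orthogonality argument among the wave packets attached to distinct selected trees, and the subtlety is that the nontrivial phase-space interaction between two selected trees is concentrated in their boundary parts (the cores are essentially orthogonal once the tops are disjoint), so the counting argument must be set up so that the boundary contributions --- which are only controlled by Hölder, not by any orthogonality --- do not spoil the $TT^*$ estimate. Handling the $K$-quasiconformal anisotropy uniformly, i.e.\ making sure all constants depend only on $d,\alpha,k_0$ and the $q_n$ and not on $L$, adds bookkeeping: one must verify that the Whitney geometry, the wave packet decay exponents $\rho_{[0,2^{j-v_n})^d}^{-\alpha}$, and the tile counting all scale correctly under the $L_n$, using only $\|L_n\|_{\op}^d \le K\det L_n$. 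Once the single-tree estimate and the counting bound are in hand, assembling \eqref{e:tensorest} is a routine summation.
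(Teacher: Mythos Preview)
Your proposal is correct and follows essentially the same strategy as the paper: pass from Whitney rectangles to multitiles, prove a single-tree estimate split into a boundary part (H\"older) and a core part (paraproduct plus phase-space projection), run a greedy tree-selection with a Bessel-type counting bound, and sum over size levels using $\|f_n\|_\infty\le 2$ to truncate and $\sum 1/q_n=1$ to balance. One minor deviation worth noting: the paper does not expand $\phi_{Q,n}*f_n$ into wave packets but simply partitions the $x$-integral into dyadic cubes $I$ with $|I|=|Q_{n_*}|^{-1}$ to form multitiles $P=I\times Q$, keeping the convolutions intact throughout; the role you assign to wave packets is played instead by the three distinct size functionals $\Sigma^{\BDR}$, $\Sigma^{\SUM}$, $\Sigma^{\COR}$ and the phase-space projection from \cite{fraccaroli2022phase}.
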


The proof of Proposition~\ref{whitneyprop} can be found in Section~\ref{s:whitneyprop}.
It requires several intermediate results,
which we state next.
The following frequency localized version of Proposition~\ref{whitneyprop} 
will play a role inside the proof.
While the singularity of the bilinear multiplier in Proposition~\ref{whitneyprop} 
can still be truly $d$-dimensional,
Proposition~\ref{globaltreeprop} only deals with a point singularity in the spirit of more classical Coifman-Meyer multilinear multipliers.
Proposition~\ref{globaltreeprop} will be proven in Section~\ref{s:globaltreeprop}. 

\begin{proposition}[Frequency localized estimate]
\label{globaltreeprop}
Let $k$ be a positive integer and
\begin{equation*}
\frac{1}{q_1}+\frac{1}{q_2}+\frac{1}{q_3} = 1, \quad 2 < q_1,q_2,q_3 < \infty.
\end{equation*}
There exists a constant $C = C(d,\alpha,k_0,k,q_1,q_2,q_3)$ such that the following holds.

Let $\eta\in \Gamma$.
For $Q\in \mathcal{W}$ and $n\in \{1,2,3\}$, 
let $\phi_{Q,n} \in \Phi_{n}^{4\alpha}(Q)$. 
For each $n\in \{1,2,3\}$ let $f_n \in L^{q_n}(\R^d)$. Then, for all $N > 0$,
\begin{equation*}\label{e:localest}
\abs{ \sum_{\substack{Q \in \mathcal{W}_N \\ \eta \in 2^{k}Q }} 
  \int_{\R^d} \prod_{n=1}^3 [ \phi_{Q,n} * {f}_n (x) ] \, \diff x }
  \le C \prod_{n=1}^3 \no{f_n}_{q_n} .
\end{equation*}  
\end{proposition}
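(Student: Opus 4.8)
The plan is to estimate the sum in Proposition~\ref{globaltreeprop} by exploiting the frequency localization $\eta \in 2^k Q$, which forces all the cubes $Q$ appearing in the sum to have comparable size and to have their frequency supports clustered near the single point $\eta \in \Gamma$. Concretely, the condition that $Q = Q(\xi, 2^{-j}) \in \mathcal{W}$ means that $\Gamma$ comes $2^{k_0}$-close to $Q$ at scale $2^{k_0+1-j}$ but not at scale $2^{k_0-j}$; combined with $\eta \in 2^k Q$, this pins $j$ to a bounded range depending on $k$ and $k_0$ once we normalize by the position of $\eta$. So after a harmless translation in frequency we are in the situation of finitely many scales, each carrying a Coifman--Meyer-type point singularity at $\eta$. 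First I would make this reduction precise: fix $\eta$, observe that the $Q$'s in the sum all satisfy $2^{-j} \sim 2^{-j(\eta)}$ for some reference integer $j(\eta)$ determined by $|\eta|$ (or, after the dilation normalization already set up in Section~\ref{sec:out}, by an absolute constant times $k$), and that their centers $\xi$ all lie within $O(2^{k-j})$ of a fixed point on $\Gamma$.

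Next I would recognize the resulting operator as a (bounded family of) classical trilinear paraproducts and invoke the multilinear Coifman--Meyer / Hörmander--Mikhlin theory in the Banach range, i.e. for exponents $q_n > 2$ with $\sum 1/q_n = 1$. The point is that once the singular set is a single point, the functions $\phi_{Q,n}$ — which by Definition~\ref{def:freqsupp} have frequency support in $Q_n$ and spatial decay governed by $\rho_{[0,2^{j-v_n})^d}^{-4\alpha}$ with $4\alpha > 8d$ — are, up to the uniform normalization constants $2^{(v_n - j)d}$, $L^1$-normalized bump functions adapted to cubes of sidelength $2^{j - v_n}$ in frequency and its reciprocal in space. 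Two of the three factors will have frequency support in a fixed annulus away from the origin (after subtracting $\eta$), and the paraproduct with one low-frequency factor is the standard situation; summing over the boundedly many scales loses only a constant $C(k, k_0, d)$. For each scale, the trilinear form $\int \prod_n (\phi_{Q,n} * f_n)$ summed over the $O_k(1)$ admissible $Q$ at that scale is a bounded trilinear Fourier multiplier form with a Mikhlin-class symbol, hence bounded by $\prod_n \|f_n\|_{q_n}$ in the range $2 < q_n < \infty$.

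The main obstacle I anticipate is handling the \emph{spatial} sum cleanly: even though the admissible scales are finite in number, at each scale there are infinitely many cubes $Q = Q(\xi, 2^{-j})$ translated in frequency, and one must sum the corresponding multiplier pieces into a single symbol satisfying Mikhlin bounds while controlling how the $\phi_{Q,n}$ (which are only assumed to be \emph{some} functions in $\Phi_n^{4\alpha}(Q)$, not canonical ones) overlap. I would deal with this by noting that the frequency supports $Q_n$ at a fixed scale have bounded overlap — they are essentially a tiling up to dilation — so the sum over $Q$ at that scale decomposes into $O_d(1)$ subsums of disjointly frequency-supported pieces, each of which can be treated by Cauchy--Schwarz in the overlapping directions together with the single-cube estimate. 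The decay exponent $4\alpha > 8d$ in the definition of $\Phi_n^{4\alpha}$ is exactly what gives absolutely summable tails for the off-diagonal spatial interactions. Once the frequency-disjoint reorganization is in place, the remaining estimate is a direct application of the Coifman--Meyer paraproduct bound plus Hölder, with no modulation invariance or tree selection needed, which is why this proposition is genuinely easier than Proposition~\ref{whitneyprop}.
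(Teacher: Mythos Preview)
Your reduction is backwards. The condition $\eta \in 2^k Q$ with $Q \in \mathcal{W}$ does \emph{not} pin the scale $j$ to a bounded range. A Whitney cube $Q(\xi, 2^{-j})$ sits at distance $\sim 2^{k_0-j}$ from $\Gamma$ in the anisotropic metric, so for \emph{every} scale $j$ there exist such cubes with $\eta \in 2^k Q$: they are precisely the $C(d,k)$ Whitney cubes at scale $j$ whose $2^k$-dilate contains the fixed point $\eta$. Thus at each fixed scale there are only boundedly many admissible $Q$, but the number of scales is bounded only by $N$, and the estimate must be uniform in $N$. You have the finite and infinite directions swapped, and your worry about ``infinitely many cubes translated in frequency'' at a single scale is a non-issue.

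Consequently, the single-scale trilinear estimate you sketch is not enough; what is needed is a genuine paraproduct bound summed over all scales. The paper translates $\eta$ to the origin, reduces (after splitting into $C(d,k)$ pieces) to exactly one cube per scale $j$, and arranges that one factor, say $\phi_{j,1}$, lies in $\Psi_{1,j}^{4\alpha}(0)$ and so has annular frequency support. The remaining two factors are split into a low-frequency piece $\widehat{\phi}_{j,n}(0)\widehat{\chi_{j-v_n}}$ and a mean-zero remainder $\rho_{j,n}$, producing three terms. Two of them are bounded by Cauchy--Schwarz in $j$, H\"older in $x$, and standard Littlewood--Paley square-function and maximal-function estimates. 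The third term, with two low-frequency factors, requires a further telescoping of $\chi_{j-v_n}$ into Littlewood--Paley pieces $\psi_l$ and, critically, the $L^{q}$ bound for the \emph{maximally truncated} singular integral $\sup_k \bigl|\sum_{l \ge k} \psi_l * f\bigr|$. None of this is captured by a finite-scale reduction, and in particular the appearance of the maximal truncation is the non-obvious ingredient you are missing.
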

The reduction of Proposition~\ref{whitneyprop} to Proposition~\ref{globaltreeprop} 
features a stopping time argument, 
which introduces spatial truncations in addition to the mere frequency localization discussed so far
and utilizes the notion of trees defined below.

For $k \in \Z$, let $\mathcal{D}_k = \{2^{k}([0,1)^{d} + l)  \colon l \in \Z^{d}\}$ 
and $\mathcal{D} = \bigcup_{k \in \Z} \mathcal{D}_k$.
An element of $\mathcal{D}$ is called a dyadic cube.

\begin{definition}[Multitile, $n$-tile]

A product $I \times Q$ is called a multitile if $I \in \D$ and $Q\in \mathcal{W}$  and $ \abs{ Q_{n_{*}} } ^{-1} = \abs{I} $.
For a multitile $I \times Q$ and  $n \in \{1,2,3\}$,
we call the product $I \times Q_n$ an $n$-tile. 
If $P = I \times {Q}$ is a  multitile,
we write $I_P$ for  $I$ and
 $Q_P$ for  ${Q}$. 
\end{definition}

\begin{definition}[Tree]
\label{def:tree}
Let $\mathcal{V}$ be a finite subset of multitiles,
let $\xi \in \Gamma$, and let $I_0 \in \mathcal{D}$.
Assume there exists at least one $P \in \mathcal{V} $ with $I_P = I_0$ and 
$\xi \in 2^{k_2+1} Q_P $.
Then the triple $(\xi,I_0,\mathcal{V})$ defines a tree $T$. We write $\xi_T $ for $\xi$, $I_T$ for $I_0$, $\mathcal{V}_T$ for $\mathcal{V}$, and $j_T$ for the top scale $\log_{2} \abs{I_0}^{1/d}$.
Attached to the tree $T$ are the following objects.
\begin{itemize}
  \item The family $\mathcal{P}_T$ of multitiles 
  in $\mathcal{V}$ with
   $I_P \subset  I_T$ and \[\xi_T \in 2^{k_2+1} Q_P.\]
  \item The family $\mathcal{B}_T$ of multitiles 
  $P \in \mathcal{P}_T$ with 
  \[\xi_T \in 2^{k_2+1} Q_P \setminus 2^{k_1+1} Q_P .\]
  \item The family  $\mathcal{I}_T$ of dyadic cubes $I\in \mathcal{D}$ such that there exist $ P$ and $P'$ in $ \mathcal{P}_T \setminus \mathcal{B}_T$ with 
  $I_P \subset I \subset I_{P'}$.
\end{itemize}  
\end{definition}

The following definition gives a gauge to 
the size of a functions near a tree.
\begin{definition}[Main sizes]
\label{def:main-sizes}
Let $1 \le p \le \infty$,
$n \in \{1,2,3\}$ and 
$f \in L^{p}(\R^{d})$.
Let $T$ be a tree.
We define
\begin{align*}
\Sigma_{n,p,f}^{\BDR}(T)  &= 
\sup_{P \in \mathcal{B}_T}
\sup_{\phi \in \Phi_{n}^{4\alpha}(Q_P)}    
\frac{\no{\rho_{I_P}^{-\alpha} [ \phi* f ] }_p}{ \abs{I_P}^{1/p}}, \\
\Sigma_{n,f}^{\SUM}(T)  &= 
 \left(  \frac{1}{\abs{I_T}} \sum_{P \in \mathcal{B}_T} 
\sup_{\phi \in \Phi_{n}^{4\alpha}(Q_P)}  
 \no{1_{I_P} [ \phi* f ] }_2^{2} \right)^{1/2}, \\
\Sigma_{n,p,f}^{\COR}(T)  &=   \sup_{i \in \Z}
\sup_{I \in \D_i \cap \mathcal{I}_T}\  
\sup_{\phi \in \Phi_{n}^{4\alpha}(Q_{T,i})}  
\frac{\no{\rho_I^{-\alpha} [ \phi* f ]}_p}{\abs{I}^{1/p}}, 
\end{align*} 
where $Q_{T,i} = Q(\xi_T,2^{k_1+5d-i})$ and $1/\infty$ is understood to be $0$.
\end{definition}
Heuristically,
the core size is large enough to control a phase space paraproduct,
but it is slightly too imprecise in terms of phase space localization.
In order to maintain the information about frequency localization of a tree,
the frequencies seen as peripheral with respect to the top-frequency 
must be measured with a different kind of size, the sum size.
The pair of sum-size and core-size are together strong enough to control the paraproduct and maintain the phase space localization,
but in order to sum together the trees of different amplitudes,
this couple still fails by a logarithmic blowup.
To adjust this last piece,
a multiplicative fraction of the sum-size is replaced by the boundary size,
which is a sup-size again,
but of nature lacunary with respect to the top frequency.
After this last adjustment,
the triple of sizes succeeds in the task of controlling the paraproduct,
maintaining phase space localization
and recovering summability over amplitudes.  
In the following proposition,
we control the phase space paraproduct by the sizes.
The proof can be found in Section~\ref{s:tree-estimate}.
 
\begin{proposition}[Phase space localized estimate]
\label{thm:tree-estimate}
Let 
\begin{equation*}
\frac{1}{q_1}+\frac{1}{q_2}+\frac{1}{q_3} = 1, \quad 2 < q_1,q_2,q_3 < \infty.
\end{equation*}
There exists a constant $C = C(d,\alpha,k_0,k,q_1,q_2,q_3)$ such that the following holds.

Let $T $ be a tree.
For each $P \in \mathcal{P}_T$ and $n \in \{1,2,3\}$,
let $\phi_{P,n} \in \Phi_{n}^{4\alpha}(Q_P)$.
Then for any $n' \in \{1,2,3\}$
\begin{multline}
\label{eq:single-tree-lac}
\abs{ \sum_{ P \in \mathcal{B}_T } \int_{\R^d} 1_{I_P} (x) \prod_{n=1}^{3} [ \phi_{P,n} * f_n(x) ] \, \diff x } \\ \le C \abs{I_T} \Sigma_{n',\infty,f_{n'}}^{\BDR}(T)   \prod_{n \ne n'} \Sigma_{n,f_n}^{\SUM}(T)   ,
\end{multline}
\begin{equation} 
\label{eq:single-tree-cor}
\abs{ \sum_{ P \in \mathcal{P}_T \setminus \mathcal{B}_T } \int_{\R^d} 1_{I_P} (x) \prod_{n=1}^{3} [ \phi_{P,n} * f_n(x) ] \, \diff x } 
 \le C \abs{I_T} \prod_{n = 1}^{3} \Sigma_{n,q_n,f}^{\COR}(T) .    
\end{equation}
\end{proposition}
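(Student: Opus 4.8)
\textbf{Proof proposal for Proposition~\ref{thm:tree-estimate}.}

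The plan is to handle the two displayed estimates separately, as the statement suggests, since they correspond to the two structurally different parts of a tree. For the boundary estimate~\eqref{eq:single-tree-lac}, the multitiles $P \in \mathcal{B}_T$ have the feature that $\xi_T$ lies in the ``annulus'' $2^{k_2+1}Q_P \setminus 2^{k_1+1}Q_P$, so the frequency supports relevant to the $\phi_{P,n}$ are separated by a fixed dilation factor from the top frequency in at least one coordinate. The first step is to recall the spatial scale organization: for fixed $n'$, group the multitiles in $\mathcal{B}_T$ by their spatial cube $I_P$, and use H\"older's inequality pointwise in $x$ on $\int_{\R^d} 1_{I_P}(x)\prod_n [\phi_{P,n}*f_n(x)]\,\diff x$, placing $f_{n'}$ in $L^\infty$ (weighted by $\rho_{I_P}^{-\alpha}$, matching the definition of $\Sigma^{\BDR}_{n',\infty,f_{n'}}$) and the other two factors in $L^2$. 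The key point is that after pulling out $\Sigma^{\BDR}_{n',\infty,f_{n'}}(T)$, the remaining $\ell^2$-type sum over $P$ of $\no{1_{I_P}[\phi_{P,n}*f_n]}_2^2$, for $n \ne n'$, is exactly what $\abs{I_T}\,(\Sigma^{\SUM}_{n,f_n}(T))^2$ controls; the Cauchy--Schwarz in the summation over $P$ then combines the two $n \ne n'$ factors. I would need to be slightly careful that the boundary weight $\rho_{I_P}^{-\alpha}$ is genuinely summable against the characteristic functions $1_{I_P}$ — this is where the assumption $\alpha > 2d$ and the bounded overlap of cubes at each scale enters, together with geometric decay across scales.

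For the core estimate~\eqref{eq:single-tree-cor}, the multitiles $P \in \mathcal{P}_T \setminus \mathcal{B}_T$ all satisfy $\xi_T \in 2^{k_1+1}Q_P$, so the frequency cut-offs cluster around the single frequency $\xi_T$, and the family $\{I_P\}$ is (by the construction of $\mathcal{I}_T$) organized into a nested collection of dyadic cubes. This is precisely a paraproduct-type sum localized to a martingale structure, and the natural approach is a telescoping/martingale-difference argument: write the sum over $P$ as a sum over scales $i$ and over $I \in \D_i \cap \mathcal{I}_T$, bound each local piece using the convolution bounds encoded in $\Phi_n^{4\alpha}(Q_{T,i})$, and then apply H\"older with exponents $q_1,q_2,q_3$ at the level of the $L^{q_n}$-normalized local averages appearing in $\Sigma^{\COR}_{n,q_n,f}(T)$. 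The choice $Q_{T,i} = Q(\xi_T, 2^{k_1+5d-i})$ in the definition of the core size is made exactly so that $\phi_{P,n}$ for $P$ at scale $i$ can be absorbed into (a fixed dilate of) functions in $\Phi_n^{4\alpha}(Q_{T,i})$; I would verify this containment as a preliminary lemma. The summation over scales $i$ should converge because at least one of the three $L^{q_n}$-averages exhibits genuine decay — morally, the innermost function is being tested against a mean-zero-ish bump — but since all $q_n > 2$ and the sizes are defined as suprema, one must argue this carefully, likely by exploiting a single $L^2$-orthogonality or square-function estimate for one of the three factors to gain the summable scale decay while keeping the other two factors in their $L^{q_n}$ size.

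I expect the main obstacle to be the core estimate, specifically making the scale-summation in~\eqref{eq:single-tree-cor} converge while only using $\sup$-type sizes $\Sigma^{\COR}$: since a supremum over scales is weaker than an $\ell^2$-sum over scales, one cannot simply invoke Littlewood--Paley square-function bounds directly, and the gain must come from the fine structure of how consecutive scales in $\mathcal{I}_T$ interlock with the mean-zero behavior built into the bump functions. The cleanest route is probably to isolate, for each scale $i$, a ``difference'' form where the innermost Schwartz bump is replaced by a telescoping difference of two averages at consecutive scales, producing a factor of $2^{-\delta|i-j_T|}$ for some $\delta = \delta(\alpha,d) > 0$; summing this geometric series then yields the clean bound by the product of the three core sizes. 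By contrast, the boundary estimate~\eqref{eq:single-tree-lac} should be routine once the weighted H\"older and the scale-decay of $\rho_{I_P}^{-\alpha}$ are set up, since the separation $k_2 - k_1 > 100d$ is ample room for the needed geometric decay.
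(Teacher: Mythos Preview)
Your treatment of the boundary estimate~\eqref{eq:single-tree-lac} is essentially correct and matches the paper's Proposition~\ref{prop:STlac}: H\"older in $x$ to pull out the $L^\infty$ factor, then Cauchy--Schwarz over $P \in \mathcal{B}_T$ to split the remaining two $L^2$ factors into the sum sizes. Your extra concern about summing $\rho_{I_P}^{-\alpha}$ against $1_{I_P}$ is unnecessary here, since $1_{I_P} \le \rho_{I_P}^{-\alpha}$ pointwise and the sum size already absorbs the full $\ell^2$-sum over $\mathcal{B}_T$; no additional scale decay is needed.

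For the core estimate~\eqref{eq:single-tree-cor}, however, there is a genuine gap. You correctly identify the obstacle --- the sizes $\Sigma^{\COR}$ are suprema, so summation over scales cannot be handled by a direct Littlewood--Paley square function --- but your proposed fix does not work. Replacing a bump by a telescoping difference of averages at consecutive scales does \emph{not} produce a factor $2^{-\delta|i-j_T|}$: there is no preferred scale in $\mathcal{I}_T$, and a tree can have arbitrarily many scales all contributing at full strength. What one actually needs is a \emph{local} square function bound of the form $\sum_{i} \|1_{I_T}[\psi_i * f_n]\|_2^2 \lesssim |I_T|\,\Sigma^{\COR}_{n,2,f_n}(T)^2$, and this is false in general for the original $f_n$ (it would hold with $\|f_n\|_\infty$ on the right, but not with the core size).

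The paper's route is substantially different and relies on an external ingredient you have not invoked: the phase space projection operators $\Pi_{T,n}$ from \cite{fraccaroli2022phase} (Theorem~\ref{thm:ppp} here). One writes $f_n = \Pi_{T,n}f_n + (f_n - \Pi_{T,n}f_n)$ and expands $\Lambda_{\mathcal{P}_T \setminus \mathcal{B}_T}$ into five pieces. The main piece $\Lambda(\Pi_{T,1}f_1,\Pi_{T,2}f_2,\Pi_{T,3}f_3)$, after adding back the complementary spatial region, becomes a \emph{global} paraproduct to which Proposition~\ref{globaltreeprop} applies, using that $\|\Pi_{T,n}f_n\|_{q_n} \lesssim |I_T|^{1/q_n}\Sigma^{\COR}_{n,q_n,f_n}(T)$ by \eqref{e:unifgnorm}. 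The remaining pieces --- both the error terms $f_n - \Pi_{T,n}f_n$ inside the tree region and the contribution of $\Pi_{T,n}f_n$ outside it --- are controlled by the quantitative approximation estimates \eqref{e:unif-gtheo} and \eqref{e:unigtheo-variant}, which give summable (not merely sup) control and are the nontrivial content of \cite{fraccaroli2022phase}. This machinery is precisely what bridges the gap between a sup-type size and a paraproduct bound, and your proposal does not supply a substitute for it.
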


The remaining ingredient of the proof of Proposition~\ref{whitneyprop} 
is a partition of the set of all multitiles into trees,
to which Proposition~\ref{thm:tree-estimate} can be applied.
This last proposition will be proved in Section~\ref{s:almostorthogonal}.


\begin{proposition}[Decomposition of the phase space]
\label{almostorthogonal}
There exists a constant $C = C(d,\alpha,k_0,k_1,k_2)$ such that the following holds.

Let $N, N' > 0$. Let $\mathcal{V}$ be the finite subset of multitiles defined by
\[
\mathcal{V} = \{ P \colon Q_P \in \mathcal{W}_N , \ I_P \subset [ - N' 2^N, N' 2^N ]^{3 \times d} \}
\]
with $\mathcal{W}$ as in Proposition~\ref{whitneyprop}.
For each $M \in \Z \cup \{-\infty\}$ there exists a family of trees $\mathcal{T}_M$  
such that
\begin{equation*}
\mathcal{V} = \bigcup_{M \in \Z \cup \{-\infty \} } \bigcup_{T \in \mathcal{T}_M} \mathcal{P}_T  
\end{equation*}
and the following hold for each $n \in \{1,2,3\}$.
\begin{itemize}
  \item For each tree $T \in \mathcal{T}_M$ for which there exists $P \in \mathcal{P}_T$ with $2^{k_1+1}Q_P \ni \xi_{T}$, it holds 
\[
 \Sigma_{n,2,f_n}^{\COR}(T) \le 2^{M/2}  \no{f_n}_{2}.
\]
  \item For every tree $T \in \mathcal{T}_M$, it holds  
\[
  \Sigma_{n,2,f_n}^{\BDR}(T) + \Sigma_{n,f_n}^{\SUM}(T) \le 2^{M/2}  \no{f_n}_{2}.
\]
  \item For every tree $T$ 
 with $\mathcal{V}_T \subset \mathcal{V}$, it holds 
\begin{equation}
\label{e:size-by-linfty}
   \Sigma_{n,2,f_n}^{\COR}(T) + \Sigma_{n,2,f_n}^{\BDR}(T) + \Sigma_{n,f_n}^{\SUM}(T) \le C \no{f_n}_{\infty} . 
\end{equation}
  \item It holds 
\begin{equation}
\label{e:almorth-overlap}
\sum_{T \in \mathcal{T}_M} 2^{M} \abs{I_T} \le C .
\end{equation}
\end{itemize}
\end{proposition}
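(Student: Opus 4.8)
The plan is to perform a greedy stopping-time selection of trees organized by dyadic amplitude levels $M$, choosing at each level the trees that carry the largest size and then iterating. First I would fix a reference size; the natural choice is a combination of the core size, boundary size and sum size, say $\Sigma_{n}(T) = \Sigma_{n,2,f_n}^{\COR}(T) + \Sigma_{n,2,f_n}^{\BDR}(T) + \Sigma_{n,f_n}^{\SUM}(T)$ normalized by $\no{f_n}_2$, and observe via \eqref{e:size-by-linfty} that this quantity is bounded by an absolute constant on any tree with $\mathcal{V}_T \subset \mathcal{V}$ (this gives the third bullet for free and also furnishes an a priori cap, so that $\mathcal{T}_M$ is empty for all but finitely many $M \ge 0$, while the $M = -\infty$ bucket collects whatever remains). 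For each $M$ (running downward from this cap), among all multitiles not yet assigned I would repeatedly select a tree whose data $(\xi_T, I_T)$ realizes—up to the usual factor-of-two slack—a size in the window $(2^{(M-1)/2}, 2^{M/2}]\,\no{f_n}_2$ for some $n$, remove $\mathcal{P}_T$ from the pool, and continue until no such tree exists; the multitiles never selected end up in $\mathcal{T}_{-\infty}$. The selection rule must be set up so that the \emph{core} size is only charged against a tree at level $M$ when that tree actually contains a multitile with $\xi_T \in 2^{k_1+1}Q_P$ (the "core-active" trees); for the trees whose top frequency sees only boundary multitiles, only the boundary and sum sizes are relevant. This is exactly what the first two bullets record: the first bullet is the threshold defining selection at core-active trees, and the second bullet follows because after a tree is selected at level $M$ the remaining multitiles cannot support a larger size.

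The heart of the matter is the Bessel-type inequality \eqref{e:almorth-overlap}, the overlap bound $\sum_{T \in \mathcal{T}_M} 2^M \abs{I_T} \le C$. Here I would separate the two mechanisms. For trees selected because of a large \emph{sum} size $\Sigma_{n,f_n}^{\SUM}(T) \sim 2^{M/2}\no{f_n}_2$, one has by definition $2^M \abs{I_T} \lesssim \no{f_n}_2^{-2} \sum_{P \in \mathcal{B}_T} \sup_{\phi} \no{1_{I_P}[\phi * f_n]}_2^2$; since the boundary multitiles $\mathcal{B}_T$ of distinct selected trees occupy essentially disjoint regions of phase space—their top data $(\xi_T, I_T)$ are separated by the greedy choice, and the annular condition $\xi_T \in 2^{k_2+1}Q_P \setminus 2^{k_1+1}Q_P$ pins each boundary $n$-tile into a bounded-overlap collection of Whitney rectangles around $\Gamma$—summing over $T \in \mathcal{T}_M$ produces a sum of $L^2$ masses of $f_n$ against an almost-orthogonal family of wave packets, which is $\lesssim \no{f_n}_2^2$ by a standard Bessel/TT$^*$ argument. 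For trees selected because of a large \emph{core} size, one instead uses that a large value of $\Sigma_{n,2,f_n}^{\COR}(T)$ forces, on some cube $I \in \mathcal{I}_T$ with a frequency cut-off adapted to $Q_{T,i}$, the local $L^2$ mass $\no{\rho_I^{-\alpha}[\phi * f_n]}_2^2 \gtrsim 2^M \abs{I}$; one then runs the same orthogonality count, now observing that the selection forces the relevant frequency balls $Q(\xi_T, \cdot)$ of distinct selected core-active trees to have bounded overlap (if two selected trees had overlapping frequency data at overlapping scales, the one chosen later would already have been absorbed into the earlier one's $\mathcal{P}_T$).

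The main obstacle I anticipate is making this orthogonality genuinely work with the \emph{mollified} spatial weights $\rho_I^{-\alpha}$ and the non-compact frequency supports allowed in $\Phi_n^{4\alpha}$: the wave packets are not literally orthogonal, only almost so, and the decay $\alpha > 2d$ is exactly what is needed to sum the Schwartz tails when comparing packets attached to spatially or frequency-separated tiles. Concretely, after the greedy selection one has to show that the family $\{(\xi_T, I_T)\}_{T \in \mathcal{T}_M}$ is "sparse" in phase space—each unit cell of phase space is touched by boundedly many selected trees—and this is where the geometry of the Whitney decomposition $\mathcal{W}$ near $\Gamma$ enters: the block $K$-quasiconformality of $L$ guarantees the boxes $Q(\xi, r)$ behave like genuine balls up to the fixed factor $K$, so the Whitney rectangles have bounded eccentricity and the usual Carleson-packing/John--Nirenberg machinery applies uniformly in $L$. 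I would isolate this phase-space sparseness as the key lemma, prove \eqref{e:almorth-overlap} from it by the dualized Bessel argument sketched above, and treat the four bullet points as essentially bookkeeping once the selection algorithm and this sparseness estimate are in place. The constant $C$ depends only on $d,\alpha,k_0,k_1,k_2$ because the overlap multiplicities and the Schwartz-tail sums depend only on these parameters, the separation between scales being encoded in $k_1,k_2$ and the quasiconformality absorbed once and for all into the geometry of $\mathcal{W}$.
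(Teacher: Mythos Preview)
Your overall framework---greedy selection by amplitude level, then a Bessel/$TT^*$ argument for \eqref{e:almorth-overlap}---matches the paper's strategy, and your treatment of the core size is close to what actually works. But there is a genuine gap in your handling of the \emph{sum size}, and it is precisely the hardest part of the argument.

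You claim that the boundary multitiles $\mathcal{B}_T$ of distinct selected trees ``occupy essentially disjoint regions of phase space'' because the top data are separated and the annular condition pins each boundary tile near $\Gamma$. This is not enough. Boundary tiles from two different selected trees can have overlapping frequency projections and overlapping spatial cubes; the annular condition says nothing about how boundary tiles of \emph{different} trees relate to one another. What the paper does instead is introduce a \emph{cone decomposition}: one fixes a direction $e$, restricts attention to boundary tiles whose frequency lies in a cone $C_i = \{\xi : |\xi - \xi_{\sigma(i)}| \le 2(\xi - \xi_{\sigma(i)})\cdot e\}$ via a Mikhlin cutoff $\mu_i$, and---crucially---orders the selection not by top cube size but by $\xi_{\sigma(i)}\cdot e$ decreasing. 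Only with this directional ordering can one prove the disjointness needed (the paper's Proposition~\ref{prop:sumpart-disjointness}): if two selected boundary tiles have overlapping frequency support and nested spatial cubes, the cone geometry forces the one with the smaller cube to have been selected \emph{later}, and then the earlier tree would already have absorbed it, contradiction. Without the cone, there is no monotone coordinate to run this argument along, and the $TT^*$ matrix is not summable. You should also note that the three sizes demand three \emph{different} selection orders (largest-cube-first for core and boundary, directional for sum), which your proposal of a single combined size does not accommodate.

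A secondary point: the $L^\infty$ bound \eqref{e:size-by-linfty} is not quite ``for free''. For the core and boundary sizes it is immediate, but for the sum size one has to show $\sum_{P\in\mathcal{B}_T}\|1_{I_P}[\phi_P*f_n]\|_2^2 \lesssim |I_T|\|f_n\|_\infty^2$, and this requires an honest argument (the paper uses Proposition~\ref{prop:lacunary-trivial} to see that the frequency supports are lacunary with respect to $\xi_T$, then invokes the Mikhlin multiplier theorem). This is not hard once spotted, but it is not automatic.
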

  
\changelocaltocdepth{1}

\subsection*{Complementary notation}
We conclude the section on outline of the proof by
listing some notational conventions that 
we intentionally omitted when describing the strategy of the proof 
but which will be worth memorizing for reading the proofs.
In what follows,
constants $C$ will depend on $d$, $\alpha$, $\varepsilon$, $k_0$, $k_1$, and $k_2$.  
The exact dependence will be implicit in our arguments.
We occasionally use the shorthand notation $A \lesssim B$ 
when $A \le CB$ for such a constant $C$.


Concerning the frequency cut-offs,
see Definition \ref{def:freqsupp},
we use the following shorthand notations:
\begin{itemize} 
  \item Given $\xi \in \R^{3 \times d}$ and $j \in \Z$, we denote 
\[
  \Phi_{n,j}^{\alpha}(\xi) = \Phi_{n}^{\alpha}(Q(\xi,2^{-j})).
\]
  \item Given $\xi \in \R^{3 \times d}$ and $j \in \Z$, we denote
\[
  \Psi_{n,j}^{\alpha}(\xi) = \Phi_{n}^{\alpha}(Q(\xi,2^{-j}) \setminus Q(\xi,2^{-j-2})).
\]
  \item Given $\xi \in \R^{3 \times d}$, we denote by $M_n(\xi,E)$ the set of $\phi$ such that 
\[
\sup_{\tau \in \R^{d} \setminus \{\xi_n\}} \abs{ (\tau-\xi_n)^{\beta}\partial^{\beta} \widehat{\phi}(\tau) } \le 2^{-v_n \abs{\beta} }, \quad \supp \widehat{\phi} \subset E 
\]
for all $\beta \in \N^{d}$ with $\abs{ \beta } \le 100d$.
We call such a $\phi$ a normalized $n$-Mikhlin cut-off to $E$ at $\xi$.
\end{itemize}

\section{Proof of Proposition~\ref{globaltreeprop}. Paraproduct}
\label{s:globaltreeprop}

Let $\eta$ and $\phi_{Q,n}$ be given as in Proposition~\ref{globaltreeprop}.
By a translation on the Fourier transform side we may assume $\eta=0$.
By definition of $\mathcal{W}$, for each $ Q \in \mathcal{W}$ it holds $0 \notin 2Q$.
Hence there exists $n\in \{1,2,3\}$ such that $0 \notin 2Q_n$.
By splitting into three cases and estimating \eqref{e:tensorest} in each case separately,
we may assume without loss of generality that $0 \notin 2Q_1$ for all $Q\in \mathcal{W}$. Further,
for each $j \in \Z$
there exists at most $C(d,k)$
distinct elements $Q \in \mathcal{W}$
with $2^{k}Q \ni 0$ and $ \abs{ Q_{n_{*}} } = 2^{-jd}$.
By splitting into $C(d,k)$ further subcases,
we may assume 
there exists at most one such $Q$.
Even further,
for each $Q$ with $ \abs{ Q_{n_{*}} } = 2^{-jd}$,
there exist $C(d,k)$, $\widetilde{C}(d,k)$, and $\{ c_{j',n} \colon \abs{ c_{j',n} } \leq \widetilde{C}(d,k) , \abs{ j' - j } \leq C(d,k) \}$
such that
\[
\phi_{Q,n} = \sum_{j' \colon \abs{ j' - j } \le C(d,k) } c_{j',n} \phi_{j',n} , \quad \phi_{j',n} \in \Phi_{n,j'}^{4\alpha}(0).
\]
Hence we may further reduce the study to the case where $\phi_{Q,n}$
is replaced by $\phi_{j,n}$ as above
and $v_n$ replaced by $v_{n}'$ with $\abs{ v_n - v_n' } \le C(d,k)$.
Hence we aim at bounding 
\[
\abs{ \sum_{j \in \mathcal{N}} 
  \int_{\R^d} c_j \prod_{n=1}^3 [ \phi_{j,n} * {f}_n (x) ] \, \diff x }
\]
where $\mathcal{N} \subset \Z$ is finite, 
$\phi_{j,1}\in \Psi_{1,j}^{4\alpha}(0)$ 
and $\phi_{j,n}\in \Phi_{n,j}^{4\alpha}(0)$ for $n \in \{2,3\}$.

Let $\chi$ be a Schwartz function on $\R^d$ such that $\widehat{\chi}(\tau)=0$ for $\abs{ \tau } \ge 2$ and $\widehat{\chi}(\tau)=1$ for $\abs{ \tau } \le 1$.
Define for $l \in \Z$
\[\chi_l(x)=2^{-l d}\chi(2^{-l}x)\]
and for each $j \in \mathcal{N}$ and $n\in \{2,3\}$, 
define
\[
\widehat{\rho}_{j,n}=\widehat{\phi}_{j,n}-\widehat{\phi}_{j,n}(0) \widehat{\chi_{j-v_n}} .
\]
By the triangle inequality, 
it suffices to prove for any collection
\[
\{c_{j} \colon \abs{ c_{j} } \le 1, \
j \in  \mathcal{N} \}
\]
bounds for the tree expressions 
\begin{align}
\label{globala}
\I &=  \abs{\sum_{j \in \mathcal{N}} 
  \int_{\R^d} \left[
  c_{j} \phi_{j,1} * {f}_1 (x) \right]
  \left[
  \rho_{j,2} * {f}_2 (x) \right]
  \left[
  \phi_{j,3} * {f}_3 (x) \right]
  \, \diff x}  ,\\
\label{globalb}
\II &=  \abs{\sum_{j \in \mathcal{N}} 
  \int_{\R^d} \left[
  c_{j} \phi_{j,1} * {f}_1 (x) \right]
  \left[
  \chi_{j-v_2} * {f}_2 (x) \right]
  \left[
  \rho_{j,3} * {f}_3 (x) \right]
  \, \diff x} ,
\\
\label{globalc}
\III &=  \abs{\sum_{j \in \mathcal{N}} 
  \int_{\R^d} \left[
  c_{j} \phi_{j,1} * {f}_1 (x) \right]
  \left[
  \chi_{j-v_2} * {f}_2 (x) \right]
  \left[
  \chi_{j-v_3} * {f}_3 (x) \right]
  \, \diff x }
\end{align}
separately.

We begin with \eqref{globala}. We estimate it with Cauchy-Schwartz in $\mathcal{N}$ 
and H\"older in $\R^d$ by
\[
     \nos{ \Big( \sum_{j \in \mathcal{N}}
\abs{ \phi_{j,1} * {f}_1 }^2 \Big)^{1/2}}_{q_1}
    \nos{ \Big( \sum_{j \in \mathcal{N}}
\abs{ \rho_{j,2} * {f}_2 }^2 \Big)^{1/2}}_{q_2}
     \nos{\sup_{j \in \mathcal{N}}
\abs{ \phi_{j,3} * {f}_3 } }_{q_3}.
\]
The term \eqref{globalb} is estimated similarly by
\begin{equation*}
\nos{ \Big(\sum_{j \in \mathcal{N}}
\abs{ \phi_{j,1} * {f}_1 }^2 \Big)^{1/2}}_{q_1}
\nos{\sup_{j \in \mathcal{N}}
\abs{ \chi_{j-v_2} * {f}_2 }}_{q_2}
\nos{ \Big(\sum_{j \in \mathcal{N}}
\abs{ \rho_{j,3} * {f}_3 }^2 \Big)^{
1/2}}_{q_3}.
\end{equation*}
In both cases,
we can apply the standard square function estimate (see Theorem~5.1.2 in \cite{MR2445437}) and maximal function estimates to 
obtain the desired bound. 
This completes the proof for $\I$ and $\II$.

It remains to estimate \eqref{globalc}. 
We telescope $\chi_{j-v_2}$ and $\chi_{j-v_3}$ into functions 
$\psi_l \coloneq \chi_{l-1}-\chi_{l}$
and thus write 

\begin{equation} \label{globalm} 
\begin{multlined}
\III \lesssim \sum_{m_1 = -v_1 - 10}^{-v_1+10} \\ 
\times \abs{ \sum_{ \substack{ m_2 \ge -v_2 \\ m_3 \ge -v_3} } \sum_{j \in \mathcal{N}} 
  \int_{\R^d}  [ \phi_{m_1+j} * {f}_1 (x) ] \prod_{n=2}^3 [ \psi_{m_
  n+j} * {f}_n (x) ] \, \diff x }
\end{multlined}
\end{equation}
where $\phi_{m_1+j} =  \phi_{j,1} * \psi_{m_1+j}$.

Fix a triple $(\kappa_1,\kappa_2,\kappa_3) \in \Z^{3}$ and 
restrict the sums to $m_n \in \kappa_n + 1000d\Z$ for $n \in \{2,3\}$
and $j \in \widetilde{\mathcal{N}} = \kappa_1 +1000d\Z$. 
By triangle inequality and summation over the $(1000d)^{3}$ values of $(\kappa_1,\kappa_2,\kappa_3)$,
it suffices to bound the restricted sum.
Consider then a fixed term in the sum \eqref{globalm}.
Such a term is non-zero only if 
\[
0 \in (\supp \widehat{\phi}_{m_1+j} + \supp \widehat{\psi}_{m_2+j} + \supp \widehat{\psi}_{m_3+j}  ).
\]
Recalling that we work with indices modulo $1000d$,
this happens only if two of the numbers in $\{m_1,m_2,m_3\}$ are equal 
and the remaining one is larger.

Assume first $m_1 = m_n \le m_{n'}$  for fixed $n, n' \in \{2,3\}$.
Then, for $\mathfrak{m} = \max(m_1,-v_{n'})$, we bound \eqref{globalm} by
\begin{multline*}
\abs{  \sum_{m_{n'} \ge \mathfrak{m}} \sum_{j \in  \widetilde{\mathcal{N}}} 
  \int_{\R^d}  [ \phi_{m_1+j} * {f}_1 (x) ] [ \psi_{m_1+j} * {f}_n (x) ] [ \psi_{m_{n'}+j} * {f}_{n'} (x) ] \, \diff x }  \\
\le \int_{\R^d} \sum_{j \in  \widetilde{\mathcal{N}}}  \abs{ \phi_{m_1+j} * {f}_1 (x) } \abs{ \psi_{m_1+j} * {f}_{n} (x) } \abs{ \sum_{m_{n'} \ge \mathfrak{m}} \psi_{m_{n'}+j} * {f}_{n'} (x) } \, \diff x \\
\le \nos{ \Big(\sum_{j \in  \widetilde{\mathcal{N}}} \abs{ \phi_{m_1+j} * {f}_1 }^{2} \Big)^{1/2}  }_{q_1} \nos{ \Big( \sum_{j \in  \widetilde{\mathcal{N}}} \abs{ \psi_{m_1+j} * {f}_n }^{2} \Big)^{1/2}  }_{q_n} \\
\times
\nos{ \sup_{j \in  \widetilde{\mathcal{N}}} \abs{ \sum_{m_{n'} \ge \mathfrak{m} + j} \psi_{m_{n'}} * {f}_{n'} }  }_{q_{n'}}.   
\end{multline*}
These factors are bounded by the square function estimate and maximally truncated singular integral estimate,
which completes the proof in this case.

Assume then that $m_2 = m_3 \leq m_1$.
Now, for $\mathfrak{m} = \max(-v_{2},-v_{3})$, we bound \eqref{globalm} by
\begin{multline*}
\abs{   \sum_{j \in  \widetilde{\mathcal{N}} } 
  \int_{\R^d} [ \phi_{m_1+j} * {f}_1 (x) ]  \sum_{k = j+ \mathfrak{m}}^{j + m_1} [ \psi_{k} * {f}_2 (x) ] [ \psi_{k} * {f}_{3} (x) ] \, \diff x } \\
= \abs{   
  \int_{\R^d}    \sum_{k \in \Z} [ \psi_{k} * {f}_{2} (x) ] [ \psi_{k} * {f}_{3} (x) ] \sum_{ j \in \widetilde{\mathcal{N}} \cap \{k - m_1, \dots, k - \mathfrak{m} \} } [ \phi_{m_1+j} * {f}_1 (x) ] }  \, \diff x \\
\le 
\nos{ \Big( \sum_{k \in \Z} \abs{ \psi_{k} * {f}_2 }^{2})^{1/2} }_{q_2}
\nos{ \Big( \sum_{k \in \Z} \abs{ \psi_{k} * {f}_3 }^{2} \Big)^{1/2} }_{q_3} \\
\times
\nos{ \sup_{k \in \Z} \abs{ \sum_{ j \in \widetilde{\mathcal{N}} \cap \{k - m_1, \dots, k - \mathfrak{m} \} } \phi_{m_1+j} * {f}_1 } }_{q_1}.
\end{multline*}
Again,
the bound follows by the square function estimate and maximally truncated singular integral estimate
and the proof is complete. \qed

\section{Proof of Proposition~\ref{thm:tree-estimate}. Tree estimate}
\label{s:tree-estimate}
 
\subsection*{Boundary part}
Given any family of multitiles $\mathcal{F} \subset \mathcal{P}_T$,
we denote 
\begin{align*}
\Lambda_{\mathcal{F}}(f_1,f_2,f_3) &= \sum_{P \in \mathcal{F}} \int_{\R^d} 1_{I_P}(x) \prod_{n=1}^{3} [ \phi_{P,n}*f_n(x) ] \, \diff x.
\end{align*}
We start with the easier bound \eqref{eq:single-tree-lac}.

\begin{proposition}
\label{prop:STlac}
There exists a constant $C$ such that for any $n' \in \{1,2,3\}$  
\[
\abs{ \Lambda_{\mathcal{B}_T}(f_1,f_2,f_3) } \le C \abs{ I_T } \Sigma_{n,\infty,f_{n'}}^{\BDR}(T)  \prod_{n \ne n'}   \Sigma_{n,f_n}^{\SUM}(T).
\]
\end{proposition}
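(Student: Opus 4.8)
The plan is to estimate the integral over each boundary multitile $P \in \mathcal{B}_T$ separately, putting the $n'$-th factor in $L^\infty$ and the two remaining factors in $L^2$, and then to sum over $P$ via Cauchy--Schwarz, matching the resulting $\ell^2$ sum against the sum size $\Sigma^{\SUM}$. First I would record that $\rho_{I_P} \equiv 1$ on $I_P$, so that for every $x \in I_P$ and every $\phi \in \Phi_{n'}^{4\alpha}(Q_P)$ one has
\[
\abs{ \phi * f_{n'}(x) } = \rho_{I_P}^{-\alpha}(x)\abs{ \phi * f_{n'}(x) } \le \nos{ \rho_{I_P}^{-\alpha} [\phi * f_{n'}] }_\infty \le \Sigma_{n',\infty,f_{n'}}^{\BDR}(T),
\]
where the last inequality uses $P \in \mathcal{B}_T$ and $\abs{I_P}^{1/\infty} = 1$. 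Applying this with $\phi = \phi_{P,n'}$, pulling that factor out in absolute value, and using Cauchy--Schwarz in $x$ on the two remaining factors, indexed by $\{n_1,n_2\} = \{1,2,3\} \setminus \{n'\}$, we get for each $P \in \mathcal{B}_T$
\[
\abs{ \int_{\R^d} 1_{I_P}(x) \prod_{n=1}^3 [\phi_{P,n} * f_n(x)] \, \diff x } \le \Sigma_{n',\infty,f_{n'}}^{\BDR}(T) \prod_{n \ne n'} \nos{ 1_{I_P} [\phi_{P,n} * f_n] }_2 .
\]

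Next I would sum over $P \in \mathcal{B}_T$ and apply Cauchy--Schwarz in the variable $P$ to split the product of the two $\ell^2$-type sums:
\[
\sum_{P \in \mathcal{B}_T} \prod_{n \ne n'} \nos{ 1_{I_P} [\phi_{P,n} * f_n] }_2 \le \prod_{n \ne n'} \Big( \sum_{P \in \mathcal{B}_T} \nos{ 1_{I_P} [\phi_{P,n} * f_n] }_2^2 \Big)^{1/2} .
\]
Since $\phi_{P,n} \in \Phi_n^{4\alpha}(Q_P)$ for every $P$, each inner sum is at most $\sum_{P \in \mathcal{B}_T} \sup_{\phi \in \Phi_n^{4\alpha}(Q_P)} \nos{ 1_{I_P} [\phi * f_n] }_2^2 = \abs{I_T} \big( \Sigma_{n,f_n}^{\SUM}(T) \big)^2$ by the definition of $\Sigma^{\SUM}$. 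Concatenating the three displays and recalling that there are exactly two indices $n \ne n'$ yields $\abs{ \Lambda_{\mathcal{B}_T}(f_1,f_2,f_3) } \le \abs{I_T}\, \Sigma_{n',\infty,f_{n'}}^{\BDR}(T) \prod_{n \ne n'} \Sigma_{n,f_n}^{\SUM}(T)$, that is, the assertion with $C = 1$.

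I do not expect a genuine obstacle here: this is the ``easy'' half of Proposition~\ref{thm:tree-estimate}, and the only points that require any care are the elementary identity $\rho_{I_P} \equiv 1$ on $I_P$, which is exactly what makes the weighted $L^\infty$ boundary size control the $n'$-th factor pointwise on $I_P$, and the bookkeeping observation that the given functions $\phi_{P,n}$ are admissible competitors in the suprema defining $\Sigma_{n',\infty,f_{n'}}^{\BDR}(T)$ and $\Sigma_{n,f_n}^{\SUM}(T)$. The substantive work left in Proposition~\ref{thm:tree-estimate} is the core estimate \eqref{eq:single-tree-cor}, treated in the rest of the section.
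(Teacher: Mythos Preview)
Your proof is correct and follows essentially the same route as the paper's own proof: H\"older's inequality in $x$ (putting the $n'$-th factor in $L^\infty$ and the other two in $L^2$), followed by Cauchy--Schwarz in $P\in\mathcal{B}_T$. You have simply written out in more detail the identification of $\no{1_{I_P}[\phi_{P,n'}*f_{n'}]}_\infty$ with a quantity bounded by $\Sigma_{n',\infty,f_{n'}}^{\BDR}(T)$ via $\rho_{I_P}\equiv 1$ on $I_P$, which the paper leaves implicit.
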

\begin{proof} 
By H\"older's inequality in $\R^d$ and the Cauchy-Schwartz inequality in $\mathcal{B}_T$ 
\begin{multline*}
  \abs{ \Lambda_{\mathcal{B}_T}(f_1,f_2,f_3) }  
\le  \sup_{P \in \mathcal{B}_T } \no{1_{P} [ \phi_{P,n'}*f_{n'} ] }_{\infty} \\
\times \prod_{n\ne n'}  \left(\sum_{P \in \mathcal{B}_T} \no{1_{P} [ \phi_{P,n}*f_n ] }_{2}^{2} \right)^{1/2}.
\end{multline*}
This concludes the proof.
\end{proof}

We turn to estimating the form $\Lambda_{\mathcal{B}_T \setminus \mathcal{P}_T}$,
which is the main source of difficulty in the proof.
Here we will need several auxiliary tools,
including Proposition~\ref{globaltreeprop} and some results from \cite{fraccaroli2022phase}. 

\subsection*{Phase space projections}
Define for $j \in \Z$
\[ \mathcal{I}_{T,j} \coloneq \{ I_P  \colon P \in \mathcal{P}_T \setminus \mathcal{B}_T \}   \cap \D_j, \quad E_j^0 \coloneq \bigcup \mathcal{I}_{T,j}.
\]
Define further for each integer $k \ge 1$
\[
\mathcal{I}_{T,j}^k \coloneq \{I\in \D_{j} \colon \rho_I(E_j^0) \le k\}, \quad
E_j^{k} \coloneq \bigcup \mathcal{I}_{T,j}^k.
\]
Finally,
for $\xi \in \R^{3 \times d}$ and $n \in \{1,2,3\}$,
we let $\Mod_{n,\xi}$ be the mapping such that 
\[
\FT (\Mod_{n,\xi} f) (\tau) = \widehat{f}(\tau+\xi_n),
\]
where $\FT$ is the Fourier transform.
We define the phase space localization by using the construction from \cite{fraccaroli2022phase}.

\begin{definition}[Phase plane projection]
Let $v \ge 0$ be an integer, $n \in \{1,2,3\}$ and $T$ be a tree.
Let $h$ be a Schwartz function.
We define $\Pi_{T,n}h = \Mod_{n,-\xi} g$ 
where $g$ is the output of Theorem~1.1 in \cite{fraccaroli2022phase}
based on the input parameter $m = v_n$, 
input function $f = \Mod_{n,\xi}h$,
input cube $U = I_T$, 
and the input $M$ being the family of minimal cubes in $\bigcup_{j \in \Z} \mathcal{I}_{T,j}$ .
\end{definition}
By scaling,
we can now quote the following result from \cite{fraccaroli2022phase}.
\begin{theorem}[Theorem~1.1 in \cite{fraccaroli2022phase}]
\label{thm:ppp}
Let $1 \le p \le \infty$ and  $1/p+1/p'=1$.
Let $\alpha>d$ and $0 \le k \le k_1+4d$.
There exists a constant $C = C(d,\alpha,p,k_0,k_1)$ such that the following holds.

Let $T$ be a tree and fix $n \in \{1,2,3\}$. 
Then for every $j \le j_T$ and $J \in \D_j$  
\begin{equation}   
\label{e:unifgnorm}
\no{ \Pi_{T,n} f}_p \le C \Sigma_{n,p,f}^{\COR}(T) \abs{ I_T }^{1/p}, 
\end{equation}
and
\begin{multline}
\label{e:unif-gtheo}
  \sum_{i \le j_T} \sum_{\substack{ I\in \mathcal{I}_{T,i} \\  I\subset J}}  \sup_{\phi\in \Phi_{n,i-k}^{4\alpha}(\xi)}
\abs{ I }^{1/p'}\no{\rho_I^{-3\alpha} [ \phi *(f-\Pi_{T,n} f) ]}_p  \\
\le C \Sigma_{n,p,f}^{\COR}(T) \abs{ J }.
\end{multline}
For every $j \le j_T$ and $J \in \D_j$ such that $I \nsubset 3J$ for any $I \in \mathcal{I}_{T,j}$ 
\begin{multline}
  \label{e:unigtheo-variant}
\sum_{i \le j_T} \sup_{\substack{I \in \D_i \setminus \mathcal{I}_{T}  \\  I\subset J}} 
\sup_{\psi\in \Psi_{n,i-k}^{4\alpha}(\xi)}
\abs{ I }^{-1/p}\no{\rho_I^{-3\alpha} [ \psi *\Pi_{T,n} f ] }_p \\
 \le C\Sigma_{n,p,f}^{\COR}(T) \no{1_{7I_T} \rho_J^{-\alpha}}_\infty.
\end{multline} 
\end{theorem}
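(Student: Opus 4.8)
\medskip
\noindent\textbf{Proof plan.}
Formally, the three displayed estimates are Theorem~1.1 of \cite{fraccaroli2022phase} after the isotropic dilation $x \mapsto 2^{v_n}x$, under which the modulations $\Mod_{n,\pm\xi}$, the dyadic grids $\D_i$, the bump classes $\Phi_{n,i-k}^{4\alpha}$ and $\Psi_{n,i-k}^{4\alpha}$, and the core size all transform homogeneously and the parameter $m = v_n$ becomes $0$. After that normalization it suffices to build the projection $g$ attached to the spatial cube $U = I_T$, the frequency origin, and the family $M$ of minimal cubes of $\bigcup_j \mathcal{I}_{T,j}$, and to verify the three bounds for it. The construction I would use is a scale-by-scale phase space projection: at spatial scale $2^i$ with $i \le j_T$ one keeps a smooth frequency cutoff at scale $2^{-i}$ around the origin, spatially localized to a mild enlargement of $E_i^0 = \bigcup \mathcal{I}_{T,i}$, and one descends in scale only while the current cube is not contained in a cube of $M$; since the spatial and frequency cutoffs do not commute, the scale-$2^i$ piece must be built from $f$ minus the coarser pieces already selected, and convergence of this iteration comes from a geometric gain between non-adjacent scales.

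The single-scale ingredient I would use is that the scale-$2^i$ piece of $g$ has $L^p$ norm $\lesssim \Sigma_{n,p,f}^{\COR}(T)\,|E_i^0|^{1/p}$; this is immediate from the definition of $\Sigma^{\COR}$, whose supremum runs exactly over the class $\Phi_n^{4\alpha}(Q_{T,i})$ with $Q_{T,i} = Q(\xi_T,2^{k_1+5d-i})$, and the hypothesis $k \le k_1+4d$ is precisely what keeps both the wave packets used in the construction and the test packets appearing in \eqref{e:unif-gtheo}--\eqref{e:unigtheo-variant} inside that frequency budget. From this I would deduce \eqref{e:unifgnorm} by decomposing $g$ over the disjoint minimal cubes of $M$ inside $I_T$ --- on each of which $g$ reduces to a single smooth wave-packet projection of $f$, directly controlled by $\Sigma^{\COR}$ --- together with a square-function estimate for the finitely overlapping pieces living above $M$; summing the $|I|^{1/p}$ over these disjoint cubes recovers $|I_T|^{1/p}$.

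For \eqref{e:unif-gtheo}, the reconstruction estimate, I would argue that when the error $f - \Pi_{T,n}f$ is tested against a packet $\phi \in \Phi_{n,i-k}^{4\alpha}$ adapted to a tree cube $I \in \mathcal{I}_{T,i}$, the construction has already absorbed the part of $f$ in the tree region at scale $\gtrsim 2^i$, so only a Schwartz tail survives, whose size decays geometrically in the scale gap; the double sum over $i \le j_T$ and the nested $I \subset J$, weighted by $|I|^{1/p'}$ and $\rho_I^{-3\alpha}$, then telescopes to $\lesssim \Sigma^{\COR}|J|$. For \eqref{e:unigtheo-variant}, the localization estimate, I would use that $\Pi_{T,n}f$ lives in phase space near the frequency origin and near the cubes of $M$ in space, so testing it against a \emph{lacunary} packet $\psi \in \Psi_{n,i-k}^{4\alpha}$, which is frequency-separated from $\xi_T$ by $\sim 2^{-i+k}$, on a cube $I \in \D_i \setminus \mathcal{I}_T$ spatially separated from the tree cubes, produces a product of two small factors: rapid decay in the frequency separation and the polynomial spatial decay measured by $\rho$. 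Summing over $i \le j_T$ yields $\lesssim \Sigma^{\COR}\,\no{1_{I_T}\rho_J^{-\alpha}}_\infty$, the extra factor $\rho_J^{-\alpha}$ appearing to account for $J$ possibly lying far from $I_T$.

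The hard part will be the bookkeeping of these Schwartz tails with two competing localizations in play: a sharp spatial cutoff ruins frequency localization and conversely, so every scale-$2^i$ piece of $g$ leaks both outside $E_i^0$ in space (with polynomial decay controlled by $\rho_I$) and outside the scale-$2^{-i}$ frequency band (with rapid decay). Propagating these leaks correctly through the greedy iteration, and checking that the accumulated spatial tails still sum against the weights $\rho_I^{-\alpha}$ and $\rho_I^{-3\alpha}$ for every admissible $k$ under only the bare hypothesis $\alpha > d$, is where essentially all the effort of \cite{fraccaroli2022phase} lies. Once those tail bounds are in hand, \eqref{e:unifgnorm}--\eqref{e:unigtheo-variant} follow by organizing the resulting sums as geometric series in the scale difference and in $\rho$.
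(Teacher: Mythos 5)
Your proposal takes essentially the same route as the paper: the paper does not reprove this result but simply quotes Theorem~1.1 of \cite{fraccaroli2022phase} after the isotropic rescaling that normalizes the parameter $m = v_n$, which is exactly your opening reduction. The remainder of your sketch concerns the internal construction of the phase space projection, which lives entirely in the cited companion paper and is not reproduced here, so there is nothing further to compare against.
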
 

\begin{proof}[Proof of Proposition~\ref{thm:tree-estimate}]
It remains to prove 
\[
\abs{ \Lambda_{\mathcal{P}_T \setminus \mathcal{B}_T}(f_1,f_2,f_3) } \le C \abs{ I_T } \prod_{n=1}^{3} \Sigma_{n,q_n,f_n}^{\COR}(T).
\]
as by Proposition~\ref{prop:STlac} we already know \eqref{eq:single-tree-lac} to hold.

\subsection*{Core part}
By decomposing $\Lambda_{\mathcal{P}_T \setminus \mathcal{B}_T}$ into $C(d,k_0,k_1)$ many distinct sums, 
we can assume that for each $j \in \Z$,
there is at most one $Q \in \mathcal{W}$ such that 
$Q_P = Q$ and $\abs{ I_P } = 2^{jd}$ for some $P \in \mathcal{P}_T \setminus \mathcal{B}_T$. 
We pick a sequence of functions 
\[
\phi_{j,n} \in \Phi_{n}^{4\alpha}(Q)
\]
such that 
\begin{multline*}
\abs{ \Lambda_{\mathcal{P}_T \setminus \mathcal{B}_T}(f_1,f_2,f_3) } 
\le 
\sum_{j \in \Z}  \max_{P \colon Q_P = Q} \abs{ \int_{\R^{d} } 1_{E_{j}^{1}} (x) \prod_{n=1}^{3} [ \phi_{P,n}*f_n(x) ] \, \diff x } \\
\le  C 
\sum_{j \in \Z}  \int_{\R^{d} } 1_{E_{j}^{1}} (x)  \prod_{n=1}^{3} [ \phi_{j,n}*f_n(x) ] \, \diff x .
\end{multline*}

We define 
\begin{align*}
\Lambda_{\mathcal{C}_T}(f_1,f_2,f_3) & \coloneq \sum_{j \in \Z}  \int_{\R^{d} } 1_{E_{j}^{1}} (x) \prod_{n=1}^{3} [ \phi_{j,n}*f_n(x) ] \, \diff x, \\
\Lambda_{\mathcal{C}_T,c}(f_1,f_2,f_3) & \coloneq \sum_{\substack{j \in \Z \\ E_j^{0} \ne \varnothing}}  \int_{\R^{d} } 1_{(E_{j}^{1})^c} (x)  \prod_{n=1}^{3} [ \phi_{j,n}*f_n(x) ] \, \diff x.
\end{align*}
We compute
\begin{equation} \label{e:tree-proof-three-terms}
\begin{multlined}
\abs{ \Lambda_{\mathcal{C}_T}(f_1,f_2,f_3) } \le \abs{ \Lambda_{\mathcal{C}_T,c}(\Pi_{T,1} f_1,\Pi_{T,2} f_2,\Pi_{T,3} f_3) } \\
+ \abs{ \Lambda_{\mathcal{C}_T}(\Pi_{T,1}f_1,\Pi_{T,2} f_2,\Pi_{T,3}f_3)
+ \Lambda_{\mathcal{C}_T,c}(\Pi_{T,1}f_1,\Pi_{T,2} f_2,\Pi_{T,3}f_3) } \\
+ \abs{ \Lambda_{\mathcal{C}_T}(f_1 - \Pi_{T,1}f_1,\Pi_{T,2} f_2,\Pi_{T,3}f_3) } \\
+ \abs{ \Lambda_{\mathcal{C}_T}(f_1,f_2 - \Pi_{T,2} f_2,\Pi_{T,3}f_3) } \\
+ \abs{ \Lambda_{\mathcal{C}_T}(f_1,f_2,f_3 - \Pi_{T,3}f_3)
} \eqqcolon  \I + \II + \III + \IV + \V
.
\end{multlined}
\end{equation}

For clarity,
we state three auxiliary facts
before estimating the five terms above.
\begin{lemma}
\label{lemma:tree-proof-psi-type}
For each $j \in \Z$, there is $n_j \in \{1,2,3\}$ and coefficients $c_{i,j}$ and functions $\phi_{i,j} \in \Psi_{n,j+i}^{4\alpha}(\xi_T)$ such that 
\[
\phi_{P,n_j} = \sum_{i=-k_1-3d}^{-k_0+1} c_{i,j} \phi_{i,j} , \quad \sum_{i=-k_1-3d}^{-k_0+1} \abs{ c_{i,j} } \le C(d).
\]
\end{lemma}
\begin{proof}
For each $j \in \Z$ and $P \in 
\mathcal{P}_T \setminus \mathcal{B}_T$ with $\abs{ I_P } = 2^{jd}$,
we know that $\xi_T \notin 2^{k_0} Q_P$.
Hence there exists at least one $n_j \in \{1,2,3\}$ 
such that $(\xi_T)_{n_j} \notin 2^{k_0} Q_{n_j}$.  
The claim follows from this.
\end{proof}
 
\begin{lemma}
\label{lemma:tree-proof-partition}
Let $\mathcal{A}$ be the set of dyadic cubes $I$ maximal with $\abs{ I } \le \abs{ I_T }$ 
and $J \subset 3I$ for no $J \in \mathcal{I}_T$ with $\abs{ J } \le \abs{ I }$.
Then $\mathcal{A}_j = \{J \in \mathcal{A} \colon \abs{ J } \ge 2^{jd} \}$ is a partition of $\Rd \setminus E_{j}^{1}$.
\end{lemma}
\begin{proof}
Disjointness follows from maximality.
If $x \in \R^{d} \setminus \bigcup \mathcal{A}_{j} $,
then $J \in \mathcal{D}_j$ with $x \in J$ satisfies $3J \supset I$ for some $I \in \mathcal{I}_T$
with $\abs{ I } \le \abs{ J }$.
Then $\widehat{I} \in \mathcal{D}_j$ with $\widehat{I} \supset I$
satisfies $\widehat{I} \in \mathcal{I}_{T,j}$
and $J \subset 3 \widehat{I}$.
Hence $J \subset E_{j}^{1}$. The inclusion $\bigcup \mathcal{A}_{j} \subset \R^d \setminus E_{j}^{1}$ follows by definition.
\end{proof}

\begin{lemma}
\label{lemma:tree-proof-singletile}
Let $j \in \Z$ and $J\in \mathcal{D}_j$ be such that $5J \supset I$ for some $I \in \mathcal{I}_{T,j}$.
Then 
\[
\no{ 1_{J} [ \phi_{j,n} * \Pi_{T,n} f_n ] }_{q_n} \le C  \abs{ J }^{1/q_n} \Sigma_{n,q_n,f_n}^{\COR}(T)
\] 
\end{lemma}
\begin{proof}
This follows by applying \eqref{e:unif-gtheo} applied to $J$ 
and restricting the sum on the left hand side to a single term as 
\begin{multline*}
\no{ 1_{J} [ \phi_{j,n} * \Pi_{T,n} f_n ] }_{q_n} \le \no{ 1_{J} [ \phi_{j,n} * (\Pi_{T,n} f_n - f_n ) ] }_{q_n}  + \no{ 1_{J} [ \phi_{j,n} * f_n ]}_{q_n}  \\
\le C \abs{ J }^{1/q_{n}} \Sigma_{n,q_n,f_n}^{\COR}(T).
\end{multline*} 
\end{proof}

Now we can estimate the five terms in \eqref{e:tree-proof-three-terms}.
To estimate $\I$,
we recall that for each $j \in \Z$,
there exists $n_j \in \{1,2,3\}$ as in Lemma~\ref{lemma:tree-proof-psi-type}.
We fix $n_j$ to be one of them so that
the three sets $\mathcal{N}_{n} = \{j \in \Z: \ E_{j}^{1} \ne \varnothing, \ n_j = n \}$
partition the subset of $\Z$ appearing in the definition of $\I$.
Then 
\begin{multline*}
  \I = \abs{ \Lambda_{\mathcal{C}_T,c}(\Pi_{T,1}f_1,\Pi_{T,2} f_2,\Pi_{T,3}f_3) } \\
\le 
\sum_{\nu=1}^{3} 
\int_{\Rd} \sum_{j \in \mathcal{N}_\nu}  1_{(E_{j}^{1})^{c}} \prod_{n=1}^{3} |\phi_{j,n}*\Pi_{T,n}f_n(x)| \, \diff x \\
\le \sum_{\nu=1}^{3} \left(\prod_{n \ne \nu} \no{M_{\HL}\Pi_{T,n}f_n}_{q_n} \right) \nos{\sum_{j \in \mathcal{N}_\nu} 1_{(E_{j}^{1})^{c}}|\phi_{j,\nu}*\Pi_{T,\nu}f_\nu|}_{q_\nu}
\end{multline*}
where $M_{\HL}$ is the Hardy--Littlewood maximal function.
By the maximal function theorem and \eqref{e:unifgnorm} from Theorem \ref{thm:ppp}
\[
\no{M_{\HL}\Pi_{T,n}f_n}_{q_n} \le C |I_T|^{1/q_n} \Sigma_{n,q_n,f_n}^{\COR}(T).
\] 
By Lemma \ref{lemma:tree-proof-partition}
and Minkowski's inequality
\begin{multline*}
\nos{\sum_{j \in \mathcal{N}_\nu} 1_{(E_{j}^{1})^{c}}|\phi_{j,\nu}*\Pi_{T,\nu}f_\nu|}_{q_\nu}^{q_{\nu}} \\
= \sum_{J \in A} \nos{ 1_{J} \sum_{j \in \mathcal{N}_\nu} \sum_{\substack{I \in \mathcal{D}_j \setminus \mathcal{I}_{T,j}^{1}    }}  1_{I}|\phi_{j,\nu}*\Pi_{T,\nu}f_\nu|}_{q_\nu}^{q_{\nu}} \\
\le \sum_{J \in A} \left(\sum_{j \in \mathcal{N}_\nu} \left(\sum_{\substack{I \in \mathcal{D}_j \setminus \mathcal{I}_{T,j}^{1}  \\ I \subset J }} \no{1_{I} [\phi_{j,\nu}*\Pi_{T,\nu}f_\nu] }_{q_{\nu}}^{q_{\nu}} \right)^{1/q_\nu}    \right)^{q_\nu} .
\end{multline*}
By Lemma~\ref{lemma:tree-proof-psi-type}
and \eqref{e:unigtheo-variant} from Theorem \ref{thm:ppp}
\begin{multline*}
\sum_{j \in \mathcal{N}_\nu} \left(\sum_{\substack{I \in \mathcal{D}_j \setminus \mathcal{I}_{T,j}^{1}  \\ I \subset J }} \no{1_{I} [\phi_{j,\nu}*\Pi_{T,\nu}f_\nu] }_{q_{\nu}}^{q_{\nu}} \right)^{1/q_\nu} \\
\le |J|^{1/q_{n}} \sum_{i=-k_1-3d}^{-k_0+1}  \sum_{j \in \mathcal{N}_\nu} 
\sup_{\substack{I \in \mathcal{D}_j \setminus \mathcal{I}_{T,j}^{1} \\ I \subset J}} \sup_{\psi \in \Psi_{\nu,j+i}^{4\alpha}(\xi_T)}  \frac{ \no{ 1_{I} [ \psi * \Pi_{T,\nu} f_{\nu} ] }_{q_{\nu}}}{\abs{I}^{1/q_{\nu}}} \\
\le C |J|^{1/q_{\nu}} \Sigma_{\nu,q_\nu,f_\nu}^{\COR}(T) \no{1_{7I_T} \rho_{J}^{-\alpha}  }_{\infty}.
\end{multline*}
Summing the $q_{\nu}$th power over $J$ concludes the proof.

To estimate 
\[
\II = \abs{ \Lambda_{\mathcal{C}_T}(\Pi_{T,1}f_1,\Pi_{T,2} f_2,\Pi_{T,3}f_3) + \Lambda_{\mathcal{C}_T,c}(\Pi_{T,1}f_1,\Pi_{T,2} f_2,\Pi_{T,3}f_3) },
\]
it suffices to apply 
the global paraproduct estimate Proposition~\ref{globaltreeprop} and the $L^{p}$ estimate for the phase space projection, \eqref{e:unifgnorm} in Theorem~\ref{thm:ppp}.
The desired bound follows.

We move to estimate $\III + \IV + \V$.
Note that for $n \in \{1,2,3\}$ and $J \in 
\mathcal{I}_{T,j}^{1}$ by definition of $ \Sigma_{n,p,f}^{\COR}(T)$
\[
\no{ 1_{J} [ \phi_{j,n} * f_{n} ] }_{q_{n}} \le \abs{ J }^{1/q_{n}}  \Sigma_{n,q_n,f_n}^{\COR}(T)
\]
and further by Lemma~\ref{lemma:tree-proof-singletile}
\[
\no{ 1_{J} [ \phi_{j,n} * \Pi_{T,n} f_n ] }_{q_n} \le C \abs{ J }^{1/q_{n}} \Sigma_{n,q_n,f_n}^{\COR}(T).
\] 
By these estimates and H\"older's inequality  
\begin{multline*}
\III + \IV + \V \le C \max_{n \in \{1,2,3\}} \Bigg\{ 
\left(\prod_{n' \ne n} \Sigma_{n',q_{n'},f_{n'}}^{\COR}(T) \right) \\
\times \sum_{j \in \Z} \sum_{J \in \mathcal{I}_{T,j}^{1}} \abs{ J }^{1-1/q_n} 
\no{ 1_{J} [ \phi_{j,n} * ( f_{n} - \Pi_{T,n} f_n ) ] }_{q_{n}}  
\Bigg\},
\end{multline*} 
from which the claim follows by \eqref{e:unif-gtheo} of Theorem~\ref{thm:ppp}.
\end{proof}


\section{Proof of  Proposition~\ref{almostorthogonal}. Tree selection}
\label{s:almostorthogonal}

We start by defining two auxiliary sizes that are needed to complement those in Definition \ref{def:main-sizes}.

\begin{definition}
\label{def:aux-sizes}
Under the set-up of Definition \ref{def:main-sizes},
define
\begin{align*} 
\Sigma_{n,p,f}^{\BDR,\TOP}(T)  &=  
\sup_{\substack{P \in \mathcal{B}_T \\ I_P = I_T}}
\sup_{\phi \in \Phi_{n}^{4\alpha}(Q_P)}  
\frac{\no{\rho_{I_T}^{-\alpha} [\phi*f]}_p}{\abs{ I_T }^{1/p}}     , \\ 
\Sigma_{n,p,f}^{\COR,\TOP}(T)  &=     \sup_{\phi \in \Phi_{n,j_T-k_1-5d}^{4\alpha}(\xi_T)}  
\frac{\no{\rho_I^{-\alpha} [\phi* f]}_p}{\abs{ I_T }^{1/p}} . \\
\end{align*}
\end{definition}

We formalize the idea of greedy selection by stating the following definition. 
\begin{definition}[Selection]
Let $\mathcal{V}$ be a finite set of multitiles.
Let $\mathcal{T}$ be the family of all trees in any of the subsets of 
$\mathcal{V}$.  
Let $S$ be a positive integer.
A selection is a mapping $\sigma \colon \{1,\ldots,S\}  \to \mathcal{T} $ 
such that 
\begin{itemize} 
  \item $\sigma(1)$ is a tree in 
$\mathcal{V}_{\sigma(1)} = \mathcal{V}$;
  \item $\sigma(i+1)$ is a tree in 
$\mathcal{V}_{\sigma(i+1)} = \mathcal{V}_{\sigma(i)} \setminus \mathcal{P}_{\sigma(i)}$ for all $i \in \{1, \dots, S-1\}$.  
\end{itemize}
\end{definition}

To prove Proposition~\ref{almostorthogonal},
we will construct several selections over the initial set of multitiles.
We first show that selections based on top size defined above have good orthogonality properties 
and as a second step we show that convexity properties allow us to infer estimates for main sizes of Definition \ref{def:main-sizes} from those for the auxiliary top sizes of Definition \ref{def:aux-sizes}.
There will be three different selection processes.
The first selection serves to identify the trees with large core size.
The following proposition shows that they have controlled overlap.  

\begin{proposition} 
\label{prop:ao-core}
There exists a constant $C$ such that the following holds.

Let $D > 1$.
Let $f \in L^{2}(\R^{d})$.
Let $\mathcal{V}$ be a finite set of multitiles and let $\sigma$ be a selection in $\mathcal{V}$.
Let $M > 0$.  
Assume the following properties of the selection.
\begin{itemize}
  \item If $I_i$ is the top cube of $\sigma(i)$ and if $I_{i+1}$ is the top cube of $\sigma(i+1)$,   
then $\abs{I_{i+1}} \le \abs{I_{i}}$ for all $i \in \{1, \dots, S-1\}$.
  \item For each $i \in \{1, \dots, S\}$, there exists $A_i \in \mathcal{P}_{\sigma(i)}$ with $2^{k_1+1}Q_{A_i} \ni \xi_{\sigma(i)}$.
  \item For each $i \in \{1, \dots, S\}$, it holds $M \le (\Sigma_{n,2,f}^{\COR,\TOP} \circ \sigma) (i) \le  DM$.
\end{itemize}
Then
\[
\left(\sum_{i = 1}^S M^{2} \abs{I_{\sigma(i)}} \right)^{1/2} \le C D \no{f}_{2} .
\]
\end{proposition}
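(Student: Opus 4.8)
The plan is to prove this by a standard almost-orthogonality argument for the core/top size, following the classical Lacey--Thiele-style tree selection but adapted to the phase-space-projection formalism of \cite{fraccaroli2022phase}. First I would unwind the definition of $\Sigma_{n,2,f}^{\COR,\TOP}(\sigma(i))$: by the hypothesis $M \le (\Sigma_{n,2,f}^{\COR,\TOP} \circ \sigma)(i)$ there is, for each $i$, a function $\phi_i \in \Phi_{n,j_{\sigma(i)}-k_1-5d}^{4\alpha}(\xi_{\sigma(i)})$ with
\[
M \abs{I_{\sigma(i)}}^{1/2} \lesssim \no{\rho_{I_{\sigma(i)}}^{-\alpha}[\phi_i * f]}_2 .
\]
Testing this against $f$ by duality, one gets
\[
M^2 \abs{I_{\sigma(i)}} \lesssim \big|\langle f, \widetilde{\phi}_i * (\rho_{I_{\sigma(i)}}^{-2\alpha}\, \phi_i * f ) \rangle\big| \lesssim DM \abs{I_{\sigma(i)}}^{1/2} \no{ g_i }_2
\]
for suitable wave packets $g_i$ essentially supported (in frequency) on $Q_n(\xi_{\sigma(i)}, c2^{k_1+5d - j_{\sigma(i)}})$ and (in space) near $I_{\sigma(i)}$ with the $\rho_{I_{\sigma(i)}}^{-\alpha}$ decay; here I would use the upper bound $\Sigma_{n,2,f}^{\COR,\TOP}(\sigma(i)) \le DM$ to control the "dual" factor. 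So the whole sum $\sum_i M^2 \abs{I_{\sigma(i)}}$ is controlled once I can show $\sum_i \abs{\langle f, g_i\rangle}^2 \lesssim D^2 \no{f}_2^2$, i.e.\ a Bessel-type/almost-orthogonality inequality for the family $\{g_i\}$.

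The heart of the matter is then the almost-orthogonality estimate $\sum_i \abs{\langle f,g_i\rangle}^2 \lesssim \no{f}_2^2$ (the extra $D^2$ coming for free from the normalization), which by $TT^*$ reduces to bounding $\sum_{i,i'} \abs{\langle g_i, g_{i'}\rangle}$ with the right gain in the "separation" between $i$ and $i'$. Two tiles $g_i, g_{i'}$ interact only if their frequency supports meet and their spatial localizations overlap. The frequency supports are the boxes $Q_n(\xi_{\sigma(i)}, c 2^{k_1+5d-j_{\sigma(i)}})$; the selection property $A_i \in \mathcal{P}_{\sigma(i)}$ with $2^{k_1+1}Q_{A_i}\ni \xi_{\sigma(i)}$ ties $\xi_{\sigma(i)}$ to the multitile frequencies, and crucially the selection is \emph{greedy in scale} ($\abs{I_{\sigma(i+1)}}\le\abs{I_{\sigma(i)}}$) and the trees are removed as we go ($\mathcal{V}_{\sigma(i+1)} = \mathcal{V}_{\sigma(i)}\setminus\mathcal{P}_{\sigma(i)}$), so that when $j_{\sigma(i)} = j_{\sigma(i')}$ with $i<i'$ and the frequency boxes are comparable, the top $\xi_{\sigma(i')}$ must lie \emph{outside} $2^{k_2+1}Q_P$ for the multitiles $P$ of the later tree that belonged to $\mathcal{P}_{\sigma(i)}$ — i.e.\ there is genuine phase-space separation forced by the order of selection. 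At equal scales this gives disjointness of the enlarged frequency boxes up to bounded overlap; at different scales $j_{\sigma(i)} \ne j_{\sigma(i')}$ one uses the Schwartz (in fact $\rho$-power) tails in space together with the frequency nesting to get a geometric gain $2^{-c\abs{j_{\sigma(i)}-j_{\sigma(i')}}}$ times a Whitney-type overlap bound on the cubes of a given scale, and sums.

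I would organize the $\sum_{i,i'}\abs{\langle g_i,g_{i'}\rangle}$ estimate by first fixing $i$ and summing over $i'$ with $j_{\sigma(i')}=j_{\sigma(i)}$: by the forced phase-space separation the frequency boxes at this common scale that can interact with $g_i$ are $O(1)$ in number (a Whitney/packing count in $\R^{3\times d}$ governed by $k_0,k_1,k_2$), and for each the spatial cubes $I_{\sigma(i')}\in\mathcal{D}_{j_{\sigma(i)}}$ have bounded overlap, so the $\rho^{-\alpha}$ weights with $\alpha>2d$ give a summable $\ell^1$ sum over translations. Then I would sum over the scales $j'\ne j$: for each, the same Whitney count applies and the kernel $\abs{\langle g_i,g_{i'}\rangle}$ acquires a factor $2^{-(\alpha-d)\abs{j-j'}}$ (say) from the mismatch of spatial scales against the frequency-support/Bernstein bounds, which sums in $j'$ since $\alpha>2d>d$. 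Combining, $\sum_{i'}\abs{\langle g_i,g_{i'}\rangle}\lesssim \sup_{i'}\no{g_{i'}}_2 \cdot \no{g_i}_2 \lesssim \abs{I_{\sigma(i)}}$, hence $\no{\sum_i c_i g_i}_2^2 \lesssim \sum_i\abs{c_i}^2\abs{I_{\sigma(i)}}$; feeding in $c_i = \langle f,g_i\rangle/\abs{I_{\sigma(i)}}^{1/2}$-type test functions and Cauchy--Schwarz closes the Bessel inequality, and tracing back the constants yields $\sum_i M^2\abs{I_{\sigma(i)}}\lesssim D^2\no{f}_2^2$, as claimed. The main obstacle I expect is making the phase-space separation between trees selected at the same scale fully rigorous from the greedy removal $\mathcal{V}_{\sigma(i+1)} = \mathcal{V}_{\sigma(i)}\setminus\mathcal{P}_{\sigma(i)}$ — i.e.\ verifying that a later top frequency cannot be too deep inside the dilated frequency box of an earlier tree's multitile of the same scale — since this is where the combinatorial geometry of the constants $k_0<k_1<k_2$ and the definition of $\mathcal{P}_T$, $\mathcal{B}_T$ is actually used; the purely analytic tail estimates are routine by comparison.
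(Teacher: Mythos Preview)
Your overall architecture---dualize against $f$, then control $\bigl\|\sum_i c_i\,\phi_i*(\rho_{I_{\sigma(i)}}^{-\alpha}g_i)\bigr\|_2$ by almost-orthogonality---matches the paper's. The gap is in your treatment of the cross-scale interactions. You assert that for $j_{\sigma(i)}\ne j_{\sigma(i')}$ the kernel $\abs{\langle g_i,g_{i'}\rangle}$ picks up a factor $2^{-(\alpha-d)\abs{j-j'}}$ from ``Schwartz tails together with frequency nesting'' and declare this ``routine by comparison''. But analytic decay alone buys you at best $(\abs{I_{\sigma(i')}}/\abs{I_{\sigma(i)}})^{1/2}$ when $I_{\sigma(i')}\subset I_{\sigma(i)}$ and the frequency supports overlap, and a priori there could be $\sim\abs{I_{\sigma(i)}}/\abs{I_{\sigma(i')}}$ many such $i'$ at a given finer scale, overwhelming the gain. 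The selection combinatorics---the very mechanism you flag as the obstacle only at \emph{equal} scales---is precisely what rules this out at \emph{all} scales: if $l>j$ both lie in $\mathcal{A}_i=\{l\ge i:\supp\widehat\phi_l\cap\supp\widehat\phi_i\ne\varnothing\}$ and $I_{\sigma(l)}\cap I_{\sigma(j)}\ne\varnothing$ with $\abs{I_{\sigma(l)}}\ll\abs{I_{\sigma(j)}}$, then Proposition~\ref{prop:almorth-projections} forces $\xi_{\sigma(j)}\in 2^{k_2+1}Q_{A_l}$, so $A_l$ would already have been swept into $\mathcal{P}_{\sigma(j)}$ and removed before step $l$, contradicting $A_l\in\mathcal{V}_{\sigma(l)}$. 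Together with the scale-monotone order this forces $\abs{I_{\sigma(l)}}\sim\abs{I_{\sigma(j)}}$, i.e.\ $\{I_{\sigma(l)}:l\in\mathcal{A}_i\}$ is a Carleson family, and it is this Carleson condition---not scale decay---that closes the sum.

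So you have the right combinatorial mechanism but localized it to the wrong place; it is the central step, producing bounded overlap across all scales, not a side issue at equal scales with analytic tails handling the rest. Once $\|\sum_{l\in\mathcal{A}_i}1_{I_{\sigma(l)}}\|_\infty\lesssim 1$ is in hand, the paper also bypasses your direct kernel bounds: it uses Proposition~\ref{prop:commute-phi-rho} to replace $\phi_l*[\phi_i*(\rho_{I_{\sigma(i)}}^{-\alpha}g_i)]$ by $C\rho_{I_{\sigma(i)}}^{-\alpha}\mathcal{M}_{\HL}g_i$, applies Cauchy--Schwarz and the maximal theorem, and reduces everything to bounding $\bigl\|\rho_{I_{\sigma(i)}}^{-\alpha}\sum_{l\in\mathcal{A}_i}\abs{I_{\sigma(l)}}^{1/2}g_l\,\rho_{I_{\sigma(l)}}^{-\alpha}\bigr\|_2$ via the Carleson-input overlap estimate Proposition~\ref{prop:overlap}.
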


The next selection serves to remove the trees that contain a lacunary multitile, 
not treated by the core size,
that however happens to gives a large contribution.

\begin{proposition} 
\label{prop:ao-single-lac}
There exists a constant $C$ such that the following holds.

Let $D > 1$.
Let $f \in L^{2}(\R^{d})$.
Let $\mathcal{V}$ be a finite set of multitiles and let $\sigma$ be a selection in $\mathcal{V}$.
Let $M > 0$.  
Assume the following properties of the selection.
\begin{itemize}
  \item If $I_i$ is the top cube of $\sigma(i)$ and if $I_{i+1}$ is the top cube of $\sigma(i+1)$,   
then $\abs{I_{i+1}} \le \abs{I_{i}}$ for all $i \in \{1, \dots, S-1\}$.
  \item For each $i \in \{1, \dots, S\}$, it holds $M \le (\Sigma_{n,2,f}^{\BDR,\TOP} \circ \sigma) (i) \le  DM$.
\end{itemize}
Then
\[
\left(\sum_{i=1}^S M^{2} \abs{I_{\sigma(i)}} \right)^{1/2} \le C D \no{f}_{2} .
\]
\end{proposition}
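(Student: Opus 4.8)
The plan is to prove Proposition~\ref{prop:ao-single-lac} by a single-scale Bessel-type argument applied to the top cubes of the selected trees. The key point is that the boundary top size $\Sigma_{n,2,f}^{\BDR,\TOP}(\sigma(i))$ being at least $M$ produces, for each $i$, a multitile $P_i \in \mathcal{B}_{\sigma(i)}$ with $I_{P_i} = I_{\sigma(i)}$ and a cut-off $\phi_i \in \Phi_n^{4\alpha}(Q_{P_i})$ witnessing the lower bound, so that $\no{\rho_{I_{\sigma(i)}}^{-\alpha}[\phi_i * f]}_2 \ge M \abs{I_{\sigma(i)}}^{1/2}$. Testing against $f$ and using that $\rho_{I_{\sigma(i)}}^{-\alpha}$ with $\alpha > 2d$ decays fast enough, one sees that $\sum_i M^2 \abs{I_{\sigma(i)}} \lesssim \sum_i \abs{\langle f, g_i \rangle}$ for suitably normalized bumps $g_i$ adapted to $I_{\sigma(i)}$ at the frequency determined by $Q_{P_i}$; by duality this reduces to an almost-orthogonality estimate $\no{\sum_i \epsilon_i g_i}_2^2 \lesssim \sum_i \abs{I_{\sigma(i)}}$ for arbitrary unimodular coefficients $\epsilon_i$.

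\textbf{Key steps.} First I would record the monotonicity of scales and the structure of $\mathcal{B}_T$: the witnessing multitile $P_i$ has top cube equal to the top cube of $\sigma(i)$, and its frequency box $Q_{P_i}$ satisfies $\xi_{\sigma(i)} \in 2^{k_2+1}Q_{P_i} \setminus 2^{k_1+1}Q_{P_i}$, which forces a separation between the top frequency $\xi_{\sigma(i)}$ and the frequency support of $\phi_i$ at scale comparable to $2^{k_1}$ relative to the size of $Q_{P_i}$. Second, I would use the selection property: once $\sigma(i)$ is chosen, its multitiles $\mathcal{P}_{\sigma(i)}$ are removed, so for $j > i$ with $\abs{I_{\sigma(j)}} \le \abs{I_{\sigma(i)}}$, either the cube $I_{\sigma(j)}$ is spatially far from $I_{\sigma(i)}$ or the frequency data are incompatible with $\xi_{\sigma(i)} \in 2^{k_2+1}Q_{P_i}$ — this is the standard tree-maximality dichotomy that yields Bessel-type disjointness. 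Third, the bumps $g_i$ pair almost-orthogonally: for a fixed scale, their spatial cubes have bounded overlap on each dyadic level because selecting a tree removes all multitiles with that top frequency sitting inside the fattened box; across scales, the frequency separation just noted, together with the polynomial decay $\rho_I^{-\alpha}$, controls the off-diagonal terms by a geometric series in the scale difference. Summing the Schur/Cotlar-type estimate then gives $\no{\sum_i \epsilon_i g_i}_2^2 \lesssim \sum_i \abs{I_{\sigma(i)}}$. Combining with the two-sided bound $M \le \Sigma_{n,2,f}^{\BDR,\TOP}(\sigma(i)) \le DM$ to convert the lower bound $M$ into the actual pairing (at the cost of a factor $D$), Cauchy--Schwarz yields $\sum_i M^2 \abs{I_{\sigma(i)}} \lesssim D M \big(\sum_i \abs{I_{\sigma(i)}}\big)^{1/2} \no{f}_2$, and solving for the sum gives the claimed $\big(\sum_i M^2\abs{I_{\sigma(i)}}\big)^{1/2} \le CD\no{f}_2$.

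\textbf{Main obstacle.} The hard part will be the almost-orthogonality estimate for the family $\{g_i\}$, i.e.\ controlling the off-diagonal pairings $\langle g_i, g_j\rangle$ when $i \ne j$. One has to carefully exploit the defining property of a selection — that $\mathcal{P}_{\sigma(i)}$ is subtracted before $\sigma(i+1)$ is picked — to rule out, for trees of comparable scale, simultaneous spatial closeness of $I_{\sigma(i)}, I_{\sigma(j)}$ and compatibility of both top frequencies with the same fattened frequency box; this is where the constants $k_1$ and $k_2$ with $k_2 - k_1 > 100d$ and the precise definition of $\mathcal{B}_T$ via $2^{k_2+1}Q_P \setminus 2^{k_1+1}Q_P$ enter crucially. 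For trees of different scales the argument is cleaner: the lacunary frequency separation at scale $\sim 2^{k_1}$ gives honest frequency disjointness between bumps whose scales differ by more than $O(1)$, and the contributions with bounded scale difference are summed using the spatial tail bound from $\alpha > 2d$. I would organize this as a Schur test on the matrix $\big(\abs{\langle g_i,g_j\rangle}\abs{I_{\sigma(i)}}^{-1/2}\abs{I_{\sigma(j)}}^{-1/2}\big)_{i,j}$, checking that each row sum is $O(1)$.
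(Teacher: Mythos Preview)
Your overall architecture matches the paper's: pick witnessing multitiles $P_i\in\mathcal{B}_{\sigma(i)}$ with $I_{P_i}=I_{\sigma(i)}$ and $\phi_i\in\Phi_n^{4\alpha}(Q_{P_i})$ realizing the lower bound, set $g_i=\rho_{I_{\sigma(i)}}^{-\alpha}[\phi_i*f]/\no{\rho_{I_{\sigma(i)}}^{-\alpha}[\phi_i*f]}_2$, and reduce by duality to bounding $\no{\sum_i c_i\abs{I_{\sigma(i)}}^{1/2}\phi_i*(\rho_{I_{\sigma(i)}}^{-\alpha}g_i)}_2$. The difference lies in how the almost-orthogonality is executed, and here your sketch contains a genuine gap.

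Your claim that ``the lacunary frequency separation at scale $\sim 2^{k_1}$ gives honest frequency disjointness between bumps whose scales differ by more than $O(1)$'' is not correct. The condition $\xi_{\sigma(i)}\in 2^{k_2+1}Q_{P_i}\setminus 2^{k_1+1}Q_{P_i}$ places $(Q_{P_i})_n$ in an annulus around $(\xi_{\sigma(i)})_n$, but the top frequencies $\xi_{\sigma(i)}$ vary with $i$, so nothing prevents $\supp\widehat{\phi}_i$ and $\supp\widehat{\phi}_l$ from overlapping when the scales of $I_{\sigma(i)}$ and $I_{\sigma(l)}$ are very different. The actual disjointness that the selection produces is \emph{spatial}, not in frequency: if one restricts to scales in a fixed class modulo a large integer and sets $\mathcal{A}_i=\{l\ge i:\supp\widehat{\phi}_l\cap\supp\widehat{\phi}_i\ne\varnothing\}$, then for $l,j\in\mathcal{A}_i$ with $\abs{I_{\sigma(l)}}<\abs{I_{\sigma(j)}}$ one shows via Proposition~\ref{prop:almorth-projections} that $I_{\sigma(l)}\cap I_{\sigma(j)}=\varnothing$ (otherwise $2^{k_2+1}Q_{P_l}\ni\xi_{\sigma(j)}$, contradicting $P_l\in\mathcal{V}_{\sigma(l)}$). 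Thus $\{I_{\sigma(l)}:l\in\mathcal{A}_i\}$ is a Carleson family.

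This also undermines the Schur test you propose. With the correct geometry, for a fixed $i$ there can be roughly $2^{kd}$ indices $l\in\mathcal{A}_i$ at relative scale $2^{-kd}$ whose cubes tile a neighbourhood of $I_{\sigma(i)}$; a row-sum of absolute pairings $\sum_{l}\abs{\langle g_i,g_l\rangle}\abs{I_{\sigma(i)}}^{-1/2}\abs{I_{\sigma(l)}}^{-1/2}$ then picks up a factor $\sim 2^{kd/2}$ per scale and does not close. The paper avoids this by not summing absolute values: after the commutation estimate $\phi_l*[\phi_i*(\rho_{I_{\sigma(i)}}^{-\alpha}g_i)]\lesssim\rho_{I_{\sigma(i)}}^{-\alpha}\mathcal{M}_{\HL}g_i$ and Cauchy--Schwarz, one is left with bounding $\no{\abs{I_{\sigma(i)}}^{-1/2}\sum_{l\in\mathcal{A}_i}\abs{I_{\sigma(l)}}^{1/2}g_l\rho_{I_{\sigma(l)}}^{-\alpha}\rho_{I_{\sigma(i)}}^{-\alpha}}_2$, and here the Carleson property is fed into the $L^2$ overlap lemma (Proposition~\ref{prop:overlap}), which exploits genuine orthogonality of the disjoint cubes rather than the triangle inequality. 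That is the missing ingredient in your plan.
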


The third selection removes the trees whose boundaries are contributing a lot to the right hand side of Proposition~\ref{thm:tree-estimate}.
While the choice order of the previous selections was based on metric geometry,
only using the size of the top cube,
the treatise of the lacunary parts of the trees requires us to 
carry out a cone decomposition and consider an order of selection based on that.

\begin{proposition} 
\label{prop:ao-sum-lac}
There exists a constant $C$ such that the following holds.

Let $D > 1$ and let $e$ be a unit vector orthogonal to $d-1$ coordinate axes.
Let $f \in L^{2}(\R^{d})$.
Let $\mathcal{V}$ be a finite set of multitiles and let $\sigma$ be a selection in $\mathcal{V}$.
Let $M > 0$.  
For each $i \in \{1, \dots, S\}$,
denote 
\[
C_{i} = \{\xi \in \R^{d} \colon \abs{\xi-\xi_{\sigma(i)}} \le  2(\xi - \xi_{\sigma(i)}) \cdot e \}
\]
and let $\mu_i \in M_n(\xi_{\sigma(i)},C_i)$.
Assume the following properties of the selection.
\begin{itemize}
  \item For all $i \in \{1, \dots, S-1\}$, assume that $\xi_{\sigma(i)}\cdot e \ge \xi_{\sigma(i+1)}\cdot e$.
  \item For each $i \in \{1, \dots, S\}$, it holds $M \le ( \Sigma_{n,\mu_i*f}^{\SUM} \circ \sigma ) (i)$ and $ ( \Sigma_{n,2,\mu_i*f}^{\BDR} \circ \sigma ) (i) \le DM$.
\end{itemize}
Then
\[
\left(\sum_{i = 1}^S M^{2} \abs{I_{\sigma(i)}} \right)^{1/2} \le C D \no{f}_{2} .
\]
\end{proposition}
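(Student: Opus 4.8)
The plan is to run a Bessel-type (almost-orthogonality) argument, testing the selected trees against a single fixed function $f$ with the help of the phase-plane projections from \cite{fraccaroli2022phase}, exactly as in the proofs of Propositions~\ref{prop:ao-core} and \ref{prop:ao-single-lac}, but now with the extra geometric input coming from the cone decomposition in the direction $e$. Writing $T_i = \sigma(i)$, the lower bound $M \le (\Sigma_{n,\mu_i * f}^{\SUM} \circ \sigma)(i)$ says that for each $i$ there is a sizeable $\ell^2$-mass of the functions $1_{I_P}[\phi * (\mu_i * f)]$ summed over $P \in \mathcal{B}_{T_i}$ and $\phi \in \Phi_n^{4\alpha}(Q_P)$. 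First I would, for each $i$, pick near-extremizers $P$ and $\phi_{P}$ and a function $g_i$ built out of the $1_{I_P}[\phi_P * \mu_i * f]$'s, normalized so that $\no{g_i}_2^2 \approx M^2 |I_{T_i}|$; the goal is then the single quantitative inequality $\sum_i M^2 |I_{T_i}| \lesssim D^2 \no{f}_2^2$.

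The mechanism is duality: expand $\sum_i M^2 |I_{T_i}| \lesssim \sum_i \la{g_i, \mu_i * f}$ (up to harmless constants and a choice of phases), move the $\mu_i$ onto $g_i$, and bound this by $\no{f}_2 \no{\sum_i \mu_i^* * g_i}_2$ via Cauchy--Schwarz. Then $\no{\sum_i \mu_i^* * g_i}_2^2 = \sum_{i,i'} \la{\mu_i^* * g_i, \mu_{i'}^* * g_{i'}}$ and one must show this is $\lesssim D^2 M^2 \sum_i |I_{T_i}|$. The diagonal terms are controlled by $(\Sigma_{n,2,\mu_i*f}^{\BDR} \circ \sigma)(i) \le DM$ together with the boundary structure of a tree (only $O(1)$ scales of $Q_P$ contribute for $P \in \mathcal{B}_T$, because of the lacunary gap $2^{k_1+1}Q_P \subset 2^{k_2+1}Q_P$). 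For the off-diagonal terms $i \ne i'$ one uses two separate decay sources: (a) spatial separation, handled because the $1_{I_P}$ supports together with the polynomial weights $\rho_{I_P}^{-\alpha}$ give summable tails, as in the earlier two propositions; and (b) frequency separation, which is where the cone geometry enters — the Mikhlin cut-offs $\mu_i \in M_n(\xi_{T_i}, C_i)$ are Fourier-supported in the cone $C_i$ around $\xi_{T_i}$ opening in direction $e$, and the monotonicity hypothesis $\xi_{T_i}\cdot e \ge \xi_{T_{i+1}}\cdot e$ forces the cones $C_i$ of distant $i$'s to be essentially disjoint, or at least to overlap with the same bounded multiplicity that appears in a standard Whitney/cone decomposition. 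Quantitatively, $\widehat{\mu_i}$ vanishes off $C_i$ and is $\le 2^{-v_n|\beta|}$-controlled in the Mikhlin sense, so the convolution $\mu_i^* * \mu_{i'}$ has a kernel with the decay needed to sum $\la{\mu_i^* * g_i, \mu_{i'}^* * g_{i'}}$ against the $\ell^2$ masses.

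The key technical point — and the step I expect to be the main obstacle — is combining the two decays correctly: near the diagonal in scale the cones $C_i, C_{i'}$ may genuinely overlap, and then one has no frequency gain and must rely purely on spatial separation of the tree tops $I_{T_i}$; while far from the diagonal the frequency gain from cone disjointness must be made uniform in the (possibly large) eccentricity $2^{v_n}$ of the boxes $Q_n$, which is exactly why the hypothesis is phrased with $M_n(\xi_{T_i},C_i)$ (anisotropic Mikhlin bounds) rather than with isotropic cut-offs. I would organize this by first reducing, via splitting into $O(1)$ subfamilies, to the case where the $Q_{P}$ for the chosen $P \in \mathcal{B}_{T_i}$ all have comparable eccentricity and the scale $|I_{T_i}|$ is constrained dyadically; then within each subfamily I would prove the almost-orthogonality estimate $\sum_{i \ne i'} |\la{\mu_i^* * g_i, \mu_{i'}^* * g_{i'}}| \lesssim D^2 M^2 \sum_i |I_{T_i}|$ by a Schur test, where the Schur kernel is indexed by the pairs (scale of $I_{T_i}$, dyadic annulus of $\xi_{T_i}\cdot e$) and decays geometrically in both because of (a) and (b) respectively. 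Once the Schur test is in place, the proposition follows by summing a geometric series and absorbing constants into $C = C(d,\alpha,k_0,k_1,k_2)$.
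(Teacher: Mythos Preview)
Your overall duality setup (pick near-extremizers, pair with $f$, Cauchy--Schwarz, expand the square) matches the paper. But the mechanism you propose for the off-diagonal terms is wrong, and this is a genuine gap, not a detail.

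You write that the monotonicity $\xi_{\sigma(i)}\cdot e \ge \xi_{\sigma(i+1)}\cdot e$ ``forces the cones $C_i$ of distant $i$'s to be essentially disjoint.'' It does not. All the cones $C_i$ open in the \emph{same} direction $e$; the hypothesis only slides their vertices along the $e$-axis. Such cones are nested rather than disjoint (in dimension one this is literally $C_i = [\xi_{\sigma(i)},\infty)$ with $C_1\subset C_2\subset\cdots$). So the Fourier supports of the $\mu_i$'s overlap heavily, $\mu_i^{*}*\mu_{i'}$ has no smallness, and the Schur test you describe does not get off the ground. Relatedly, your diagonal claim that ``only $O(1)$ scales of $Q_P$ contribute for $P\in\mathcal{B}_T$'' is also false: the boundary $\mathcal{B}_T$ contains multitiles at every scale below $j_T$, with $O(1)$ multitiles \emph{per scale}. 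This is exactly why the sum size is an honest $\ell^2$ sum and not just a sup.

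What the paper actually does: after duality and expanding the square, one fixes a multitile $P$ in some tree $\sigma(s)$ and collects all later multitiles $P'$ whose $n$-frequency support meets that of $\phi_P$. The cone hypothesis is used \emph{only} to deduce an ordering on the selection indices --- if $P'\in\sigma(l)$ with $|I_{P'}|\ll |I_P|$, the annular-sector geometry forces $\xi_{\sigma(l)}\cdot e < \xi_{\sigma(s)}\cdot e$, hence $l>s$. Now the greedy removal in the selection kicks in: since $l>s$, $P'\in\mathcal{V}_{\sigma(l)}$ implies $P'\notin\mathcal{P}_{\sigma(s)}$; but the frequency overlap gives $\xi_{\sigma(s)}\in 2^{k_2+1}Q_{P'}$, so the only way $P'$ escaped tree $s$ is $I_{P'}\cap I_P=\varnothing$. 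This is Proposition~\ref{prop:sumpart-disjointness} in the paper. The resulting spatial disjointness makes $\{I_{P'}\}$ a Carleson family, and Proposition~\ref{prop:overlap} closes the estimate. In short, the cone ordering produces \emph{spatial} orthogonality via the selection mechanism, not frequency orthogonality via disjoint supports. Your proposal needs to be reworked around this idea.

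Minor point: the phase-plane projections from \cite{fraccaroli2022phase} play no role here; the only tools used are Proposition~\ref{prop:commute-phi-rho} (to commute $\phi$ past $\rho_I^{-\alpha}$) and Proposition~\ref{prop:overlap}.
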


To apply the propositions stated above,
we still have to solve the discrepancy between the definitions of sizes in Definition \ref{def:main-sizes} 
and Definition \ref{def:aux-sizes}.
This is the content of the last proposition of this section.
We need one more definition.

\begin{definition}[Convex collection]
A finite family of multitiles $\mathcal{V}$ is a convex collection 
if for any tree $T$ on $\mathcal{V}$ and 
\[
j_{min} = \min_{P \in \mathcal{P}_T} \log_2 \abs{I_P}^{1/d}
\]
the condition $j \in \Z \cap \{i \colon j_{min} \le i \le j_T\}$ implies that there exist $P \in \mathcal{P}_{T}$ with $\abs{I_P} = 2^{jd}$ and the condition that $2^{k_1+1}Q_P \ni \xi_T$ for some $P \in \mathcal{P}_T$ implies $2^{k_1+1}Q_{P'}$ for a $P' \in \mathcal{P}_T$ with $I_{P'} = I_{T}$.
\end{definition}

For the purpose of the proof of our main theorem,
the convex collections are the only ones that matter.
The importance of the convex collections lies in the fact that 
every tree on a convex collection has a subtree whose size is attained by one of its top multitiles.


Moreover, for a tree $T$ we set
\[
\Theta(T) = \begin{cases}
1, \quad \trm{if there exists $P \in \mathcal{P}_T$ with $2^{k_1+1}Q_P \ni \xi_{T}$}. \\
0, \quad \trm{otherwise}.
\end{cases}
\]

\begin{proposition}
\label{prop:induction-step}
Let $\mathcal{V}$ be a convex family.
Let $\{e_{\delta} \colon 1 \le \delta \le 2d  \}$ be the unit vectors orthogonal to the $(d-1)$-dimensional coordinate hyperplanes.
Let 
\[
C_{e} = \{\xi \in \R^{d} \colon \abs{\xi-\xi_{\sigma(i)}} \le  2(\xi - \xi_{\sigma(i)}) \cdot e \}.
\]
Let $\mu^{\delta,n} \in M_n(C_{e_{\delta}})$ with 
\[
\sum_{\delta=1}^{2d} \widehat{\mu}^{\delta,n}(\xi) = 1, \quad \xi \ne 0.
\] 
For a tree $T$ on $\mathcal{V}$,
we set $\widehat{\mu}^{\delta}_{T,n}(\xi) = \widehat{\mu}^{\delta,n}(\xi-(\xi_T)_n)$.

Let $M \in \Z$ be such that for all trees $T$ on $\mathcal{V}$
\begin{equation*}
\label{eq:sele-1}
\max_{\substack{1 \leq \delta \leq 2d \\ n \in \{1,2,3\}}} \max \left\{ \Sigma_{n,2,f_n}^{\BDR}(T)  ,  \Sigma_{n,\mu_{T}^{\delta,n} * f_n}^{\SUM}(T) , \Theta(T)\Sigma_{n,2,f_n}^{\COR}(T) \right\} \le 2^{M/2} \no{f_n}_{2}  . 
\end{equation*}
Then there exists a selection $\sigma$ on $\mathcal{V}$ such that 
\[
\widetilde{\mathcal{V}}_{M-1} = \mathcal{V} \setminus \bigcup_{i=1}^S \mathcal{P}_{\sigma(i)}
\]
is a convex family such that for all trees on $\widetilde{\mathcal{V}}_{M-1}$
\begin{multline}
\label{eq:sele-2}
\max_{\substack{1 \leq \delta \leq 2d \\ n \in \{1,2,3\}}} \max \left\{ \Sigma_{n,2,f_n}^{\BDR}(T) , \Sigma_{n,\mu_{T}^{\delta,n} * f_n}^{\SUM}(T)  ,  \Theta(T)\Sigma_{n,2,f_n}^{\COR}(T) \right\} 
\\
\le 2^{(M-10d)/2} \no{f_n}_{2}
\end{multline}
and  
\begin{equation}
\label{e:induction-step-overlap}
\sum_{i=1}^S 2^{M} \abs{ I_{\sigma(i)} } \lesssim 1.
\end{equation}
\end{proposition}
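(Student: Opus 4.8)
\textbf{Plan of proof of Proposition~\ref{prop:induction-step}.}
The strategy is to iterate the three selection propositions (Propositions~\ref{prop:ao-core}, \ref{prop:ao-single-lac}, \ref{prop:ao-sum-lac}) in a fixed order, running each of them in a regime where the corresponding top size is comparable to the scale $2^{M/2}\no{f_n}_2$, and then to transfer the conclusion from the auxiliary top sizes of Definition~\ref{def:aux-sizes} to the main sizes of Definition~\ref{def:main-sizes} via the convexity of the collection. First I would invoke the hypothesis that every tree $T$ on $\mathcal{V}$ satisfies the bound on $\Sigma_{n,2,f_n}^{\BDR}(T)$, $\Sigma_{n,\mu_T^{\delta,n}*f_n}^{\SUM}(T)$, and $\Theta(T)\Sigma_{n,2,f_n}^{\COR}(T)$. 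The goal is to remove the trees whose main sizes are in the top dyadic band, i.e.\ comparable to $2^{M/2}\no{f_n}_2$ rather than $2^{(M-10d)/2}\no{f_n}_2$, and to show the total measure $\sum_i 2^M \abs{I_{\sigma(i)}}$ of the cubes selected is $\lesssim 1$.

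The key reduction is the remark stated just before the proposition: on a convex collection, every tree has a subtree whose size is attained (up to a constant) by one of its top multitiles. This is what lets us replace, for a tree $T$ that fails \eqref{eq:sele-2}, the main size by the corresponding top size of Definition~\ref{def:aux-sizes}; after passing to the subtree, $\Sigma_{n,2,f_n}^{\COR}(T)$ is controlled by $\Sigma_{n,2,f_n}^{\COR,\TOP}(T)$ and likewise $\Sigma_{n,2,f_n}^{\BDR}(T)$ by $\Sigma_{n,2,f_n}^{\BDR,\TOP}(T)$, with an analogous statement for the sum size localized to the cone. Concretely I would organize the selection into three successive phases, each a selection $\sigma$ in the sense of Definition~\ref{def:aux-sizes} that greedily chooses, among all trees in the current residual collection, a tree whose relevant top size exceeds a threshold $M \le (\mathrm{top\ size}\circ\sigma)(i) \le DM$ with $D$ an absolute constant (so at most $O(d)$ dyadic bands are involved between the two thresholds $2^{M/2}$ and $2^{(M-10d)/2}$, and a union bound over those bands, the $2d$ directions $e_\delta$, and the three indices $n$ costs only a constant): (i) a core phase ordered by decreasing top cube size, removing trees with $\Theta(T)=1$ and large $\Sigma_{n,2,f_n}^{\COR,\TOP}$, for which Proposition~\ref{prop:ao-core} gives $\sum_i M^2\abs{I_{\sigma(i)}} \lesssim D^2\no{f_n}_2^2$; (ii) a boundary-top phase, again ordered by decreasing top cube, removing trees with large $\Sigma_{n,2,f_n}^{\BDR,\TOP}$, handled by Proposition~\ref{prop:ao-single-lac}; (iii) a sum phase, where for each direction $e_\delta$ one orders by $\xi_{\sigma(i)}\cdot e_\delta$ decreasing and removes trees with large $\Sigma_{n,\mu_i*f_n}^{\SUM}$ while $\Sigma_{n,2,\mu_i*f_n}^{\BDR}$ stays $\le DM$, handled by Proposition~\ref{prop:ao-sum-lac}. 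In each case the output bound $\sum_i 2^M\abs{I_{\sigma(i)}} \lesssim 1$ (after normalizing $\no{f_n}_2 \le 1$, which one may do by homogeneity, or carrying the $\no{f_n}_2^2$ factor and using that $M$ is chosen so that $2^{M/2}\no{f_n}_2$ is the relevant scale) follows by squaring the conclusion of the corresponding selection proposition and summing; combining the three phases and the finitely many bands/directions/indices gives \eqref{e:induction-step-overlap}.

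The two points requiring care are, first, checking that the residual collection $\widetilde{\mathcal{V}}_{M-1} = \mathcal{V}\setminus\bigcup_i \mathcal{P}_{\sigma(i)}$ is again convex: since a selection removes entire families $\mathcal{P}_{\sigma(i)}$, which are exactly the multitiles of $\mathcal{V}$ lying under a top cube with the top frequency in their dilated boxes, removing them preserves the "no gaps in scale" property and the property that the top-frequency condition at some multitile forces it at a top-scale multitile; this is a bookkeeping argument but must be spelled out using the precise definition of $\mathcal{P}_T$ in Definition~\ref{def:tree} and of convex collection. Second, and this is the main obstacle, one must verify that after the three phases \emph{every} tree on $\widetilde{\mathcal{V}}_{M-1}$ genuinely satisfies \eqref{eq:sele-2}: a tree that was not itself selected could still have large main size because the subtree realizing its size sat inside a removed $\mathcal{P}_{\sigma(i)}$, or because the cone decomposition $\sum_\delta \widehat{\mu}^{\delta,n} = 1$ only controls the sum size direction by direction. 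Resolving this requires combining the convexity-based size-attainment remark with the fact that the selections were run to exhaustion in each band (so any tree with top size still above $M$ in the residual collection would have been chosen), together with the triangle inequality $\Sigma_{n,f_n}^{\SUM}(T) \le \sum_{\delta=1}^{2d}\Sigma_{n,\mu_T^{\delta,n}*f_n}^{\SUM}(T)$ coming from the partition of unity on the frequency annulus. I would prove this "exhaustion implies smallness on the complement" statement as a short lemma preceding the iteration, then assemble the three phases and conclude.
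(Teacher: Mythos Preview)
Your proposal is correct and follows essentially the same route as the paper. The paper organizes the argument exactly as you describe: three successive selection phases (core, boundary, sum), each run to exhaustion for every $n$ (and, in the sum phase, for every direction $e_\delta$), with the core and boundary phases ordered by decreasing $|I_{\sigma(i)}|$ and the sum phase ordered by decreasing $e_\delta\cdot(\xi_T)_n$; Propositions~\ref{prop:ao-core}, \ref{prop:ao-single-lac}, \ref{prop:ao-sum-lac} give the overlap bound \eqref{e:induction-step-overlap}, and the convexity-to-top-size reduction (packaged in the paper as an ``admissible tree'' notion) yields \eqref{eq:sele-2} on the residual by the exhaustion argument you outline. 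Two small remarks: your worry that ``the subtree realizing the size sat inside a removed $\mathcal{P}_{\sigma(i)}$'' cannot occur, since any subtree of a tree on the residual is itself on the residual; and the triangle inequality $\Sigma_{n,f_n}^{\SUM}\le\sum_\delta\Sigma_{n,\mu_T^{\delta,n}*f_n}^{\SUM}$ is not needed here, as \eqref{eq:sele-2} is already stated for each $\delta$ separately.
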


\subsection*{Auxiliary propositions for almost orthogonality}
In this subsection,
we prove two additional estimates that are needed in the proofs of Propositions \ref{prop:ao-core}, \ref{prop:ao-single-lac} and \ref{prop:ao-sum-lac}. 

\begin{proposition}
\label{prop:commute-phi-rho}
Let $\alpha > 2d$.
There exists a constant $C$ such that the following holds.

Let $j \in \Z$, $k \ge 0$, $\xi \in \R^{d}$ and $f \in L^{\infty}(\R^{d})$.
Let $\varphi \in \Phi_{n,j-k}^{4\alpha}(\xi)$ and $I$ be a cube with $\abs{I} = 2^{jd}$.
Denote by $\mathcal{M}_{\HL}$ the Hardy--Littlewood maximal function.
Then for all $x \in \R^{d}$
\[
\abs{ \varphi*(\rho_{I}^{-\alpha} f)(x) } \le C  \rho_{I}(x)^{-\alpha} \mathcal{M}_{\HL}f(x).
\]   
\end{proposition}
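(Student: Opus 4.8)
The plan is to reduce the convolution $\varphi*(\rho_I^{-\alpha}f)(x) = \int \varphi(x-y)\rho_I(y)^{-\alpha}f(y)\,\diff y$ to a weighted average of $f$ against the integrable kernel $|\varphi(x-y)|$, and then to absorb the weight factor $\rho_I(y)^{-\alpha}$ into two parts: a factor comparable to $\rho_I(x)^{-\alpha}$ pulled out of the integral, and a remaining harmless factor which, together with $|\varphi(x-y)|$, still integrates to something controlled by $\mathcal{M}_{\HL}f(x)$. The key geometric input is the elementary quasi-triangle inequality for the mollified distance: there is a dimensional constant $c_d$ such that for all $x,y$,
\[
\rho_I(y) \ge c_d\,\frac{\rho_I(x)}{1+ 2^{-j}|x-y|},
\]
equivalently $\rho_I(x)^{-\alpha} \lesssim \rho_I(y)^{-\alpha}(1+2^{-j}|x-y|)^{\alpha}$. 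This follows directly from the definition $\rho_I(x)=\inf\{r>1\colon x\in(2r-1)I\}$ and $|I|=2^{jd}$: enlarging the defining dilate of $I$ by an additive amount of order $2^{-j}|x-y|$ moves one from capturing $y$ to capturing $x$.

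First I would write, using this inequality,
\[
\abs{\varphi*(\rho_I^{-\alpha}f)(x)} \le \int \abs{\varphi(x-y)}\,\rho_I(y)^{-\alpha}\abs{f(y)}\,\diff y \lesssim \rho_I(x)^{-\alpha}\int \abs{\varphi(x-y)}\,(1+2^{-j}|x-y|)^{\alpha}\,\abs{f(y)}\,\diff y.
\]
Next I would use the pointwise bound available from $\varphi\in\Phi_{n,j-k}^{4\alpha}(\xi)$, namely $\abs{\varphi(z)} \le 2^{(v_n-j+k)d}\rho_{[0,2^{j-k-v_n})^d}^{-4\alpha}(z) \lesssim_k 2^{-jd}(1+2^{-j}|z|)^{-4\alpha}$ (here I absorb the fixed parameters $k$, $v_n$, and the normalization $v_{n_*}=0$, so $v_n\ge 0$, into the implicit constant — note the statement's constant $C$ is permitted to depend on $d$, $\alpha$, and the fixed parameters). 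Then the kernel in the last integral is bounded by a constant times $2^{-jd}(1+2^{-j}|x-y|)^{-3\alpha}$, and since $3\alpha > d$ (as $\alpha>2d$), this is an $L^1$-normalized approximate identity at scale $2^j$; its convolution with $|f|$ is pointwise $\lesssim \mathcal{M}_{\HL}f(x)$ by the standard domination of convolution with a radially decreasing integrable kernel by the maximal function. Combining gives the claim.

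The only mild obstacle is bookkeeping: one must check that the loss of $(1+2^{-j}|x-y|)^{\alpha}$ from moving the weight is more than compensated by the decay $4\alpha$ of $\varphi$, leaving decay $3\alpha>d$ so that the maximal-function domination applies; and one must carefully track that the Definition~\ref{def:freqsupp} bound on $\varphi$ is stated with cube $[0,2^{j-v_n})^d$ shifted by $k$ to scale $2^{j-k-v_n}$, so the scale mismatch is only a fixed power of $2^k$ and is harmlessly absorbed. There is no genuine analytic difficulty here; the proposition is a routine "the weight $\rho_I^{-\alpha}$ essentially commutes past a smooth frequency cut-off at the matching scale" lemma, and the proof is a two-line estimate once the quasi-triangle inequality for $\rho_I$ is in hand.
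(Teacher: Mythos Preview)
Your approach is essentially the paper's: use a quasi-triangle inequality to pull $\rho_I(x)^{-\alpha}$ out of the integral, then dominate the remaining kernel by the maximal function. The paper phrases the quasi-triangle step as the pointwise product bound $\rho_{[0,2^{j})^d}^{-\alpha}(x-y)\,\rho_I^{-\alpha}(y) \le C\,\rho_I^{-\alpha}(x)$, which is exactly your inequality rewritten.

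There is, however, a genuine bookkeeping error in your intermediate step. You write $\abs{\varphi(z)} \lesssim_k 2^{-jd}(1+2^{-j}|z|)^{-4\alpha}$ and say you ``absorb \ldots\ $v_n$ \ldots\ into the implicit constant.'' But $v_n$ is \emph{not} a fixed parameter: it depends on the matrix $L$, and for fixed $K$ it can be arbitrarily large (e.g.\ in $d=1$ take $L=(1,-N,N-1)$, so $v_{n_*}=0$ while $v_2\approx\log_2 N$). The paper's stated convention is that constants depend only on $d,\alpha,\varepsilon,k_0,k_1,k_2$; uniformity in $L$ is the whole point. Your coarsened bound hides a factor of $2^{v_n d}$ (and also $2^{kd}$; note $k\ge 0$ is quantified in the statement, not fixed).

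The fix is immediate and is what the paper does: do not coarsen the kernel scale to $2^{j}$. From Definition~\ref{def:freqsupp} one has $\abs{\varphi(z)} \le 2^{-(j-k-v_n)d}\rho_{[0,2^{j-k-v_n})^d}^{-4\alpha}(z)$; since $j-k-v_n \le j$, your quasi-triangle loss $(1+2^{-j}|z|)^{\alpha} \le \rho_{[0,2^{j-k-v_n})^d}^{\alpha}(z)$ is absorbed by one factor of the decay, leaving the $L^1$-normalized kernel $2^{-(j-k-v_n)d}\rho_{[0,2^{j-k-v_n})^d}^{-3\alpha}$, whose convolution with $|f|$ is $\lesssim \mathcal{M}_{\HL}f$ with a constant depending only on $d$ and $\alpha$ (since $3\alpha>d$). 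With this correction your argument is complete and matches the paper's.
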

\begin{proof}
As for $j' \leq j$ we have $\rho_{[0,2^{j'})^d} \geq \rho_{[0,2^{j})^d}$, then for any $\varphi \in \Phi_{n,j-k}^{4\alpha}(\xi)$ and $x \in \R^{d}$,
it holds
\[
\abs{ \rho_{[0,2^{j-k})^{d}}^{\alpha}(x) \varphi(x) }  \le 2^{(j-k+v_n)d}  \rho_{[0,2^{j-k-v_n})^{d}}^{-3\alpha}(x).
\] 
We also have  
\[
\rho_{[0,2^{j})^d}^{-\alpha}(x-y) \rho_{I}^{-\alpha}(y) \le C \rho_{I}^{-\alpha}(x) 
\]
for all $x,y \in \R^{d}$.
Indeed, if $2 \rho_{I}(y) \ge  \rho_{I}(x)$, 
this is clear,
and if $2 \rho_{I}(y) \le \rho_{I}(x)$, 
then
\[
\rho_{[0,2^{j})^d}(x-y) \ge \rho_{I}(x) - \rho_{I}(y) \ge \frac{\rho_{I}(x)}{2} .
\]
In conclusion, 
\begin{multline*}
\abs{ \varphi*(\rho_{I}^{-\alpha} f)(x) } \\
= 
\abs{ \int_{\R^d} \rho^{\alpha}_{[0,2^{j})^{d}}(x-y)\varphi(x-y)  \rho^{-\alpha}_{[0,2^{j})^{d}}(x-y)\rho^{-\alpha}_{I}(y) f(y) \, \diff y}  \\
\le C \rho_{I}(x)^{-\alpha} [ 2^{(j-k+v_n)d} \rho^{-3\alpha}_{[0,2^{j-k-v_n})^{d}}*(\rho_{I}^{-\alpha} \abs{f})] (x).
\end{multline*}
This concludes the proof.
\end{proof}

The second auxiliary proposition is essentially a restatement of
Lemmata~5.1,~5.2 and~5.3 in \cite{MR1979774}.
Also this estimate is needed in the proofs of all the Propositions~\ref{prop:ao-core},~\ref{prop:ao-single-lac} and~\ref{prop:ao-sum-lac}.
\begin{proposition}
\label{prop:overlap}
Let $A_1$ be a positive constant and let $\alpha  > d$.
Then there exists a constant $A_2$ such that the following holds.

Let $J \in \mathcal{D}$.
Let $\mathcal{I} \subset \mathcal{D}$ be a family of cubes satisfying 
\[
\sum_{\substack{I \in \mathcal{I} \\ I \subset I'}} \abs{I} \le A_1 \abs{I'}
\]
for all cubes $I'$ and $|I| \le |J|$ for all $I \in \mathcal{I}$.
For each $I \in \mathcal{I}$,
let $g_I \in L^{2}(\R)$ be given. 
Then 
\begin{equation}
\label{eq:prop-bctr}
\nos{ \rho_{J}^{-\alpha} \sum_{I \in \mathcal{I}} \abs{I}^{1/2} g_I \rho_{I}^{- \alpha} }_2
\le A_2 \abs{J}^{1/2}  \sup_{I \in \mathcal{I}} \no{g_I}_{2}  .
\end{equation} 
\end{proposition}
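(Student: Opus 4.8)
The plan is to prove the weighted $L^2$ bound \eqref{eq:prop-bctr} by a Schur-type testing argument, estimating the kernel of the operator $g \mapsto \rho_J^{-\alpha}\sum_I |I|^{1/2} g_I \rho_I^{-\alpha}$ after dualizing. First I would dualize: it suffices to show that for every $h \in L^2$ with $\|h\|_2 = 1$,
\[
\Bigl| \int_{\R^d} \rho_J^{-\alpha}(x) \sum_{I \in \mathcal{I}} |I|^{1/2} g_I(x) \rho_I^{-\alpha}(x) \, h(x) \, \diff x \Bigr| \le A_2 |J|^{1/2} \sup_{I} \|g_I\|_2.
\]
Bounding $|g_I(x)| \le \|g_I\|_\infty$ would be too crude since the $g_I$ are only in $L^2$; instead I would keep them paired against $h$ and use Cauchy--Schwarz in $x$ for each fixed $I$, writing $|I|^{1/2}\|g_I\|_2 \cdot \|1_{\text{loc}} \rho_J^{-\alpha}\rho_I^{-\alpha} h\|_2$, and then the heart of the matter is the purely geometric summation $\sum_{I \in \mathcal{I}} |I|^{1/2} \rho_J^{-\alpha}(x)\rho_I^{-\alpha}(x)$, which one wants to control pointwise or in an averaged sense by $C|J|^{1/2}\rho_J^{-\alpha'}(x)$ for a slightly smaller $\alpha'$, using the Carleson packing condition $\sum_{I \subset I', I \in \mathcal{I}} |I| \le A_1 |I'|$.

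More precisely, the key step is the pointwise estimate: for each $x$,
\[
\sum_{I \in \mathcal{I}} |I|^{1/2} \rho_I^{-\alpha}(x) \le C |J|^{1/2} \quad\text{(roughly)},
\]
which one organizes by grouping the cubes $I \in \mathcal{I}$ according to their sidelength $2^k$ and their distance from $x$ (measured in units of $2^k$). For cubes of a fixed scale $2^k \le |J|^{1/d}$ at dyadic distance $\sim 2^{k+m}$ from $x$, the packing condition limits their number by $\lesssim A_1 2^{(m-?)d}$ while each contributes $2^{kd/2} \cdot (1+2^m)^{-\alpha}$; summing over $m \ge 0$ converges because $\alpha > d$, and summing the resulting geometric series over $k$ with $2^k \le |J|^{1/d}$ gives the factor $|J|^{1/2}$. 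The weight $\rho_J^{-\alpha}$ then has to be distributed: one uses $\rho_J^{-\alpha}(x) \le \rho_I^{-\alpha}(x) \cdot (\text{bounded})$ when $I \subset$ a dilate of $J$, or else $\rho_J$ is already large and absorbs everything; this is the same bookkeeping as in Proposition~\ref{prop:commute-phi-rho} and is the mechanism behind Lemmata~5.1--5.3 of \cite{MR1979774} being cited. After establishing the pointwise/averaged geometric bound, one applies Cauchy--Schwarz against $h$ one cube at a time and re-sums, paying one more factor from the $\ell^1$--$\ell^2$ passage but again absorbing it into the convergent series since $\alpha > d$ leaves room to spare.

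The main obstacle I expect is the correct handling of the two weights $\rho_J^{-\alpha}$ and $\rho_I^{-\alpha}$ simultaneously: since the cubes $I$ need not be contained in $J$ (only $|I| \le |J|$ is assumed), one cannot simply say $\rho_I \ge \rho_J$, and one must split into the regime where $I$ is near $J$ (so $\rho_I$ dominates the geometry and $\rho_J$ is comparable to $1$ near $I$) and the regime where $I$ is far, where the factor $\rho_J^{-\alpha}(x)$ is what produces decay and one trades part of its exponent. Getting a clean statement that survives the dualization — i.e. produces $|J|^{1/2}\sup_I \|g_I\|_2$ rather than $|J|^{1/2}\bigl(\sum_I \|g_I\|_2^2\bigr)^{1/2}$ or similar — requires being careful to extract the $\ell^\infty$ norm over $I$ before summing, which is exactly why one keeps the full pointwise geometric sum $\sum_I |I|^{1/2}\rho_I^{-\alpha}(x)$ together rather than applying Cauchy--Schwarz in $I$ prematurely. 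Once the bookkeeping of the two weights and the packing condition is set up, the remaining computations are the routine geometric-series estimates that I would not grind through here.
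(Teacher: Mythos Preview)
Your plan has a genuine gap at exactly the point you flag as delicate. The pointwise bound
\[
\sum_{I \in \mathcal{I}} |I|^{1/2}\,\rho_I^{-\alpha}(x) \le C\,|J|^{1/2}
\]
is correct and provable by the scale-and-distance bookkeeping you describe. But it does \emph{not} deliver the estimate with $\sup_I\|g_I\|_2$ on the right. If you insert absolute values and use the pointwise bound, you get
\[
\Bigl|\rho_J^{-\alpha}(x)\sum_I |I|^{1/2} g_I(x)\rho_I^{-\alpha}(x)\Bigr|
\le C\,|J|^{1/2}\,\sup_{I}|g_I(x)|,
\]
whence $\|T(g_I)\|_2 \le C|J|^{1/2}\,\|\sup_I|g_I|\|_2$, and this last norm is controlled by $(\sum_I\|g_I\|_2^2)^{1/2}$, not by $\sup_I\|g_I\|_2$. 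The dualized route fares no better: after Cauchy--Schwarz in $x$ for each fixed $I$ you must bound the $\ell^1$-in-$I$ quantity $\sum_I|I|^{1/2}\|\rho_J^{-\alpha}\rho_I^{-\alpha}h\|_2$, and a pointwise bound on $\sum_I|I|^{1/2}\rho_J^{-\alpha}\rho_I^{-\alpha}$ simply does not control a sum of $L^2$ norms (Minkowski goes the wrong way). Your proposed fix, ``keep the geometric sum together,'' cannot work because the $g_I$ are independent $L^2$ functions with no common pointwise majorant; there is nothing to factor out. The ``room to spare'' from $\alpha>d$ helps with the geometric series but not with this $\ell^\infty(L^2)$ versus $L^2(\ell^\infty)$ mismatch.

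The paper closes this gap by a different mechanism that you are missing entirely: a best-constant bootstrap on the squared $L^2$ norm. One first proves the sharp-cutoff inequality
\[
\Bigl\|\sum_{I \in \mathcal{I}} |I|^{1/2} g_I\, 1_{DI}\Bigr\|_2 \le C_D\,\sup_I\|g_I\|_2\,\Bigl(\sum_{I \in \mathcal{I}}|I|\Bigr)^{1/2}
\]
by expanding the square, ordering the double sum so that the inner cube is nested in a dilate of the outer one, applying the \emph{same} inequality (with unknown best constant $A$) to the inner sum, and solving the resulting relation $A^2 \le C A$. This self-referential step is what forces out the $\sup_I\|g_I\|_2$ and uses the Carleson packing in an essential, non-pointwise way. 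Only then does one decompose $\rho_I^{-\alpha}$ and $\rho_J^{-\alpha}$ into indicator layers and sum the series using $\alpha>d$.
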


\begin{proof}
We first prove the reminiscent inequality 
\begin{equation}
\label{eq:in-best-const-trick}
\nos{\sum_{I \in \mathcal{I}} \abs{I}^{1/2} g_I 1_{D I} }_2
\le 2D^{d} \sqrt{5A_1}  \sup_{I \in \mathcal{I}} \no{g_I}_{2}  \left(  \sum_{I \in \mathcal{I}} \abs{I} \right)^{1/2}
\end{equation} 
for all odd numbers $D \ge 3$.
Here the non-local cut-off functions are replaced by sharp cut-off functions.

Fix a family $\mathcal{I}$ and the corresponding functions $g_I$.
Let $\mathcal{I'} \subset \mathcal{I}$ be finite.
Let $A$ be the sharp constant for the inequality \eqref{eq:in-best-const-trick} 
when considered over all finite subfamilies of $\mathcal{I}'$.
Then 
\begin{multline*}
\nos{\sum_{I \in \mathcal{I}'  } \abs{I}^{1/2} g_I 1_{D I} }_2^{2}
\le 
2 \sum_{I \in \mathcal{I}'} 
\sum_{\substack{J \in \mathcal{I}'\\ DJ \subset 5DI}}
\la{\abs{I}^{1/2} g_I 1_{D I} , \abs{J}^{1/2} g_J 1_{D J}} \\
\le 
2   \sum_{I \in \mathcal{I}'} 
\abs{I}^{1/2} \no{g_I}_2 \nos{ \sum_{\substack{J \in \mathcal{I}'\\ DJ \subset 5DI}} \abs{J}^{1/2} g_J 1_{D J} }_2 \\
\le 
 2 A D^{d/2} \sup_{I \in \mathcal{I}'}  \no{g_I}_2 ^{2} \sum_{I \in \mathcal{I}'} 
\abs{I}^{1/2}   \left( \sum_{\substack{J \in \mathcal{I}'\\  J \subset 5DI}} \abs{J} \right)^{1/2} \\
\le 2 \sqrt{5 A_1}   A D^{d} \sup_{I \in \mathcal{I}'}  \no{g_I}_2^{2} \left( \sum_{I \in \mathcal{I}' } \abs{I} \right) .
\end{multline*}
Consequently $A  \le 2D^{d} \sqrt{5A_1} $.
As this constant is independent of $\mathcal{I}'$ and the functions $g_I$,
the proof of \eqref{eq:in-best-const-trick} is complete.

To prove \eqref{eq:prop-bctr},
we write 
\[
\rho_{I}^{-\alpha} \le \sum_{k=1}^{\infty} k^{-\alpha} 1_{(2k-1)I}
\]
and 
\[
\rho_{J}^{-\alpha} \le 1_{J} + \sum_{l=1}^{\infty} 
l^{-\alpha} 1_{ (2l+1) J \setminus (2l-1) J}.
\] 
Set $\mathcal{I}_{k,l} = \{I \in \mathcal{I} \colon (2k-1)I \cap (2l+1) J \ne \varnothing \}$.
Then 
\begin{multline*}
\nos{ \rho_{J}^{-\alpha} \sum_{I \in \mathcal{I}} \abs{I}^{1/2} g_I \rho_{I}^{-\alpha} }_2  \\
\le \sum_{k,l = 1}^{\infty} 
k^{-\alpha} l^{-\alpha} \nos{ 1_{ (2l+1)J \setminus (2l-1) J} \sum_{I \in \mathcal{I}_{k,l}} \abs{I}^{1/2} g_I 1_{(2k-1)I} }_2 
\\
\leq \sum_{k,l = 1}^{\infty} k^{-\alpha} l^{-\alpha} \nos{ 1_{ (2 \max \{ l, k \} + 2) J } \sum_{I \in \mathcal{I}_{k,l}} \abs{I}^{1/2} g_I 1_{(2k-1)I} }_2 \\
\lesssim \sum_{k,l = 1}^{\infty} 2^{-d/2} k^{-\alpha - d/2} l^{-\alpha} \abs{ (2 \max \{ l, k \} + 2) J }^{1/2} \lesssim \abs{J}^{1/2} .
\end{multline*}
\end{proof}

Finally,
we state as a separate proposition the obvious fact that elements of $\mathcal{W}$
with overlapping projections 
are close to each other in the product space too,
something that is a direct consequence of the defining inequalities of $\mathcal{W}$.
\begin{proposition}
\label{prop:almorth-projections}
Let $a\ge 0$.
Let $Q,Q' \in \mathcal{W}$ satisfy $\abs{Q} \geq \abs{Q'}$.
Assume that for some $n \in \{1,2,3\}$ there exist
\[
\xi \in 2^{a} Q_n \cap 2^{a}Q_n'  , \quad
\eta \in 2^{a} Q \cap \Gamma  , \quad
\zeta \in 2^{a} Q' \cap \Gamma  .
\]
Then $2^{a+4} Q \supset Q'$.
\end{proposition}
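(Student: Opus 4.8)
The plan is to unwind the definitions of the rectangles $Q(\xi,r)$ and of the family $\mathcal{W}$ in order to compare linear dimensions. First I would record the elementary observation that for any $Q = Q(\zeta, 2^{-j}) \in \mathcal{W}$, the $n$-th block $Q_n$ is the smallest coordinate box containing $B(\zeta_n, 2^{v_n - j})$, so $Q_n$ has side-length comparable to $2^{v_n - j}$; consequently $2^a Q_n$ has side-length comparable to $2^{a + v_n - j}$ and diameter at most $C(d) 2^{a+v_n-j}$. Since by hypothesis $\abs{Q} \ge \abs{Q'}$ and the scales of $Q$ and $Q'$ are indexed through $\abs{Q_{n_*}}^{-1}$, the scale $j$ of $Q$ is at most the scale $j'$ of $Q'$; I would make this precise by noting that $\abs{Q} = \prod_n \abs{Q_n} \asymp 2^{(\sum_n v_n - 3j)d}$ and similarly for $Q'$, so $j \le j'$ up to a bounded additive error that can be absorbed by the $2^{a+4}$ slack.

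The core of the argument is then a triangle-inequality estimate controlling the center $\zeta'$ of $Q'$ by the center $\zeta$ of $Q$. From the defining near-miss/miss conditions for $\mathcal{W}$, the set $\Gamma$ comes within distance $\asymp 2^{k_0 - j}$ of (the appropriate block of) $Q$ but misses the slightly smaller dilate, and similarly for $Q'$; moreover we are given actual points $\eta \in 2^a Q \cap \Gamma$ and $\zeta \in 2^a Q' \cap \Gamma$. The strategy is: the $n$-th coordinates $\xi$, $\eta_n$, $\zeta_n$ lie within $C 2^{a + v_n - j}$ of the center of $Q$ in block $n$ and within $C 2^{a+v_n - j'}$ of the center of $Q'$ in block $n$; chaining these through the common point $\xi \in 2^a Q_n \cap 2^a Q'_n$ shows the $n$-blocks of $Q$ and $Q'$ are within $C 2^{a+v_n-j}$ of each other. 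To upgrade this from a single block $n$ to all three blocks simultaneously, I would use the linear structure of $\Gamma$: since $\eta, \zeta \in \Gamma$, there are $\tau, \tau' \in \R^d$ with $\eta = (L_1\tau, L_2\tau, L_3\tau)$ and $\zeta = (L_1\tau', L_2\tau', L_3\tau')$; from closeness in block $n$ one gets $\no{L_n(\tau - \tau')} \lesssim 2^{a+v_n-j}$, hence $\no{\tau - \tau'} \lesssim 2^{a-j}$ using that $L_n$ is invertible with $\no{L_n^{-1}}_{\op} \lesssim 2^{-v_n}$ (the quasiconformality bound $\no{L_n}_{\op}^d \le K \det L_n$ controls the inverse operator norm in terms of $\no{L_n}_{\op} \asymp 2^{v_n}$). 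Then for every $m \in \{1,2,3\}$ we get $\no{L_m \tau - L_m \tau'} \lesssim 2^{a+v_m - j}$, which feeds back into a bound on how far the center of $Q'_m$ is from the center of $Q_m$ (going through $\eta_m$, $\zeta_m$, and their memberships in the dilated boxes), namely $\lesssim 2^{a + v_m - j}$.

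Combining the per-block displacement bound with the side-length comparison, each block $Q'_m$ has side-length at most $C 2^{a + v_m - j'} \le C 2^{a+v_m-j}$ and its center lies within $C 2^{a+v_m-j}$ of the center of $Q_m$, whereas $2^{a+4} Q_m$ is the concentric dilate of $Q_m$ by the factor $2^{a+4}$ and has side-length $\asymp 2^{a+4+v_m-j}$. Choosing the implicit constants so that the dilation factor $2^{a+4}$ beats the displacement-plus-size overhead (this is exactly why the exponent $a+4$ rather than $a$ appears), we conclude $Q'_m \subset 2^{a+4} Q_m$ for each $m$, and taking the product over $m$ gives $Q' \subset 2^{a+4} Q$, as desired.

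The step I expect to be the main obstacle is tracking all the bounded dimensional constants carefully enough to verify that a dilation by exactly $2^{a+4}$ (and not, say, $2^{a+C(d,K)}$) suffices; this is really a bookkeeping issue about how the anisotropy factors $2^{v_n}$ enter the definition of $Q(\xi,r)$ and cancel between $Q$ and $Q'$, together with making sure the passage $\no{\tau - \tau'}$ bound $\Rightarrow$ all-block bound does not lose a $K$-dependent factor in the dilation exponent (it may legitimately lose such a factor in the implied constant, which is harmless, but not in the geometric inclusion). If the clean exponent $a+4$ turns out to need $k_0 \ge 3$ or $k_i - k_j > 100d$ from the global setup, I would invoke those standing assumptions at this point. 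Everything else is a routine chain of triangle inequalities.
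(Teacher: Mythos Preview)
Your approach is essentially the same as the paper's. The paper packages the key step---propagating closeness from block $n$ to all three blocks via the linear structure of $\Gamma$---as a Lipschitz bound for the inverse of the projection $\mathfrak{P}\colon\Gamma\to\R^d$, $\eta\mapsto\eta_n$, with respect to the anisotropic metrics $\dist_n(\xi_n,\eta_n)=\inf\{r:\eta_n\in Q_n(\xi_n,r)\}$ and $\dist_{\FULL}(\xi,\eta)=\inf\{r:\eta\in Q(\xi,r)\}$. You do the same computation explicitly by inverting $L_n$ and re-applying $L_m$. Your concern about a $K$-dependent factor is legitimate and is in fact shared by the paper: the asserted Lipschitz constant ``$2$'' for $\mathfrak{P}^{-1}$ hides exactly the bound $\|L_m L_n^{-1}\|\lesssim K\,2^{v_m-v_n}$ coming from quasiconformality, so the honest exponent is $a+C(d,K)$ rather than $a+4$. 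This is harmless for the downstream applications, where the proposition is invoked with slack of order $k_1$ or $k_2-k_1$, provided one allows the separation parameters $k_0,k_1,k_2$ to be chosen large in terms of $d$ and $K$; the paper fixes them only in terms of $d$, which is a minor imprecision you have correctly flagged.
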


\begin{proof}
First we note that the projection $\mathfrak{P} \colon \Gamma \to \Rd$ defined through $\mathfrak{P}\xi = \xi_n$ is a bijection.
This follows from the regularity of $L$ and the fact $\Gamma = \{L(\tau,\tau,\tau) \colon \tau \in \R^{d}\}$.
Consider the metrics 
\begin{align*}
  \dist_{\FULL}(\xi,\eta) &= \inf\{r \colon \eta \in Q(\xi,r) \},\\
  \dist_n(\xi_n,\eta_n) &= \inf\{r \colon \eta_n \in Q_n(\xi_n,r) \}
\end{align*}
The left inverse $\mathfrak{P}^{-1}$ is a $2$-Lipschitz mapping $(\Rd, \dist_n) \to (\Gamma, \dist_{\FULL})$
following directly from the definition of the metrics.
We infer that 
\begin{align*}
\dist_{\FULL}(\mathfrak{P}^{-1}\xi,\mathfrak{P}^{-1}\mathfrak{P} \eta) &\le 2 \dist_n (\xi,\mathfrak{P}\eta), \\
\dist_{\FULL}(\mathfrak{P}^{-1}\xi,\mathfrak{P}^{-1}\mathfrak{P} \zeta) &\le 2 \dist_n (\xi,\mathfrak{P}\zeta)   
\end{align*}
so that 
\[
\dist_{\FULL}(\mathfrak{P}^{-1}\mathfrak{P} \zeta,\mathfrak{P}^{-1}\mathfrak{P} \eta) \le 2^{a+2} \diam Q_n.
\]
where the diameter $\diam$ is computed with respect to $d_n$.
As $\abs{Q} \ge \abs{Q'}$, we conclude $2^{a+3} Q \cap Q' \ne \varnothing$
and the claim follows.
\end{proof}

As an immediate corollary of Proposition~\ref{prop:almorth-projections},
we conclude that the multitiles $P \in \mathcal{B}_T$ have all their frequency projections supported far from the projections of the top frequency.
This will imply important $L^{2}$ orthogonality properties for the sum size. 

\begin{proposition}
\label{prop:lacunary-trivial}
Given $P \in \mathcal{B}_T$ with $\abs{I_P} = 2^{jd}$ for some $j \in \Z$,
we have for all $n \in \{1,2,3\}$
\begin{equation*}
\label{e:lacunary-trivial}
(Q_{P})_n \subset \Rd \setminus Q_n(\xi_T,2^{-j-k_2}).
\end{equation*}
\end{proposition}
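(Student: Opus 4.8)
The plan is to prove the statement one coordinate at a time, reducing it for each $n\in\{1,2,3\}$ to a single lower bound on the $\dist_n$-distance between the centre of $Q_P$ and $(\xi_T)_n$. The one substantive idea is that, while the hypothesis $P\in\mathcal B_T$ only gives $\xi_T\notin 2^{k_1+1}Q_P$ --- which a priori reflects separation in just one of the three coordinates --- the facts that $\xi_T\in\Gamma$ and that $Q_P\in\mathcal W$ carries a point of $\Gamma$ near its centre let us propagate this to all three coordinates. This propagation is exactly what the metric bookkeeping in the proof of Proposition~\ref{prop:almorth-projections} does, so I would reuse it.

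First I would fix scales: writing $\abs{I_P}=2^{jd}$ and $Q_P=Q(\xi^{P},r)$, the multitile identity $\abs{Q_{n_*}(\xi^{P},r)}=\abs{I_P}^{-1}$ together with $v_{n_*}=0$ forces $r=2^{-j-1}$. In the metrics $\dist_{\FULL}$ on $\R^{3\times d}$ and $\dist_n$ on $\R^{d}$ from the proof of Proposition~\ref{prop:almorth-projections} --- for which $Q(\xi,\rho)=\{\eta\colon\dist_{\FULL}(\xi,\eta)<\rho\}$, $Q_n(\xi,\rho)=\{y\colon\dist_n(\xi_n,y)<\rho\}$, $2^{a}Q(\xi,\rho)=Q(\xi,2^{a}\rho)$ and $\dist_n(\xi_n,\eta_n)\le\dist_{\FULL}(\xi,\eta)$ --- the sets $(Q_P)_n$ and $Q_n(\xi_T,2^{-j-k_2})$ are the open $\dist_n$-balls of radii $2^{-j-1}$ and $2^{-j-k_2}$ centred at $(\xi^{P})_n$ and $(\xi_T)_n$. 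By the triangle inequality they are disjoint, which is the claim, as soon as
\[
\dist_n\big((\xi^{P})_n,(\xi_T)_n\big)\ \ge\ 2^{-j-1}+2^{-j-k_2},
\]
so everything reduces to this inequality.

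To prove it I would argue in $\dist_{\FULL}$ and then descend to the $n$-th coordinate. From $P\in\mathcal B_T$ we get $\xi_T\notin 2^{k_1+1}Q_P=Q(\xi^{P},2^{k_1-j})$, hence $\dist_{\FULL}(\xi^{P},\xi_T)\ge 2^{k_1-j}$; from $Q_P\in\mathcal W$ the defining Whitney property gives $\eta\in\Gamma\cap 2^{k_0+1}Q_P=\Gamma\cap Q(\xi^{P},2^{k_0-j})$, so $\dist_{\FULL}(\xi^{P},\eta)\le 2^{k_0-j}$, and $k_1>k_0$ then yields $\dist_{\FULL}(\xi_T,\eta)\ge 2^{k_1-j-1}$. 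Since $\xi_T$ and $\eta$ both lie on $\Gamma$, I would apply the Lipschitz estimate for the inverse coordinate projection $\mathfrak P_n^{-1}$ from the proof of Proposition~\ref{prop:almorth-projections} to turn this into $\dist_n((\xi_T)_n,\eta_n)\gtrsim 2^{k_1-j}$, whereas $\dist_n((\xi^{P})_n,\eta_n)\le\dist_{\FULL}(\xi^{P},\eta)\le 2^{k_0-j}$ holds with no structure on $\Gamma$. One more triangle inequality, using that $k_1-k_0$ is large ($k_1-k_0>100d$, so in particular it dominates the distortion constant of $\mathfrak P_n^{-1}$) and $k_1>k_0\ge 3$, gives $\dist_n((\xi^{P})_n,(\xi_T)_n)\gtrsim 2^{k_1-j}$, which exceeds $2^{-j-1}+2^{-j-k_2}$, as needed.

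The step I expect to be the crux --- more careful bookkeeping than real difficulty --- is the passage from separation in one coordinate to separation in all of them. It rests on $\Gamma$ being, for each fixed $n$, a Lipschitz graph over the $n$-th coordinate space with distortion controlled by $d$ and $K$: invertibility of the maps $L_n$ comes from non-triviality, and the bound on $\no{L_n^{-1}}_{\op}$, hence on the distortion, comes from $K$-quasiconformality. This is precisely the Lipschitz property of $\mathfrak P_n^{-1}$. The one thing to watch is that the dilation exponents be tracked sharply enough that both this distortion and the $2^{k_0}$-sized error coming from the Whitney point $\eta$ are absorbed into the gap $k_1-k_0$; once that is arranged the displayed inequality holds for every $n$, which is the assertion.
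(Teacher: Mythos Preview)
Your argument is correct and follows essentially the same route as the paper: both use that $P\in\mathcal B_T$ gives $\dist_{\FULL}(\xi^{P},\xi_T)\ge 2^{k_1-j}$, that the Whitney condition on $Q_P$ puts a point of $\Gamma$ within $\dist_{\FULL}$-distance $2^{k_0-j}$ of $\xi^{P}$, and then invoke the Lipschitz property of the inverse coordinate projection on $\Gamma$ (from the proof of Proposition~\ref{prop:almorth-projections}) to propagate the separation to each $\dist_n$. The only difference is packaging: the paper introduces an auxiliary box $Q'=Q(\xi_T,2^{-j+k_1/50})$ and cites Proposition~\ref{prop:almorth-projections} as a black box to conclude $Q'_n\cap (Q_P)_n=\varnothing$, whereas you unpack that proposition's proof directly.
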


\begin{proof}
Denote $Q = Q_P$.
By construction $ 2^{k_0+1} Q \cap \Gamma \ne \varnothing$.
On the other hand,
as $P \in \mathcal{B}_T$,
we know that $\xi_T \notin 2^{k_1+1} Q$.
Set $Q' = Q(\xi_T,2^{-j+k_1/50})$.
Then $Q' \cap  Q = \varnothing$.
It follows by Proposition~\ref{prop:almorth-projections} that 
$Q'_{n} \cap Q_{n} = \varnothing$.
\end{proof}

\changelocaltocdepth{2}

\subsection{Proof of Proposition~\ref{prop:ao-core}. Core size} 

For each $i \in \{1, \dots, S\}$,
we find 
\[
\phi_i \in \Phi_{n,j_{\sigma(i)}-k_1-5d}^{4\alpha}(\xi_{\sigma(i)}) 
\]
such that 
\[
c_i =  \no{\rho_{I_{\sigma(i)}}^{-\alpha} [ \phi_i* f ]}_2 
, \quad M\sqrt{\abs{ I_{\sigma(i)} }} \leq c_i \leq D M\sqrt{\abs{ I_{\sigma(i)} }}.
\]
Let
\[
g_i = \frac{\rho_{I_{\sigma(i)}}^{-\alpha} [ \phi_i* f ]}{\no{\rho_{I_{\sigma(i)}}^{-\alpha} [ \phi_i * f ]}_2}.
\]
Now
\begin{multline*}
\sum_{i = 1}^S M^{2} \abs{ I_{\sigma(i)} } \lesssim 
\sum_{i = 1}^S c_i \abs{ I_{\sigma(i)} }^{1/2} \la{f, \bar \phi_i* (\rho_{I_{\sigma(i)}}^{-\alpha} g_i ) } \\
\le \no{f}_{2} \nos{\sum_{i = 1}^S c_i \abs{ I_{\sigma(i)} }^{1/2} [ \phi_i* (\rho_{I_{\sigma(i)}}^{-\alpha} g_i ) ] }_2.
\end{multline*}
Expanding the square and using the symmetry,
we obtain
\begin{equation} \label{eq:tree-sel-2}
\begin{multlined}
\nos{ \sum_{i = 1}^S \abs{ I_{\sigma(i)} }^{1/2} c_i [ \phi_i* (\rho_{I_T}^{-\alpha} g_i ) ] }_2^{2} \\
\lesssim D^2 M^{2} \sum_{i=1}^S \abs{ I_{\sigma(i)} }^{1/2} \sum_{l = i}^S 
\abs{ I_{\sigma(l)} }^{1/2} \la{ \rho_{I_{\sigma(l)}}^{-\alpha} g_{l} , \phi_{l} * [ \phi_i *(\rho_{I_{\sigma(i)}}^{-\alpha} g_i)]}.
\end{multlined}
\end{equation}

Let 
\[
\mathcal{A}_i  = \{ l \in \{i, \dots, S\}  \colon \supp \widehat{\phi}_{l} \cap \supp \widehat{\phi}_{i} \ne \varnothing  \}
\]
By Proposition~\ref{prop:commute-phi-rho},
for all $l \in \mathcal{A}_i$
\[
\phi_{l} * [ \phi_i *(\rho_{I_{\sigma(i)}}^{-\alpha} g_i) ] \le C \rho_{I_{\sigma(i)}}^{-\alpha} \mathcal{M}_{\HL} g_i 
\]
where $\mathcal{M}_{\HL}$ is the Hardy--Littlewood maximal function.
Using this estimate,
Cauchy--Schwarz inequality and 
the Hardy--Littlewood maximal function theorem, 
we bound the right hand side of \eqref{eq:tree-sel-2} 
by 
\[
C D^2 M^{2} \sum_{i = 1}^S \abs{ I_{\sigma(i)} } \nos{\frac{1}{ \abs{ I_{\sigma(i)} }^{1/2}} \sum_{l \in \mathcal{A}_i} 
\abs{ I_{\sigma(l)} }^{1/2} g_{l} \rho_{I_{\sigma(l)}}^{-\alpha} \rho_{I_{\sigma(i)}}^{-\alpha} }_{2}.
\]
By hypothesis,
for each $l \in \{1, \dots, S\}$ there exists a top multitile $A_l \in \mathcal{P}_{\sigma(l)}$ with $\xi_{\sigma(l)} \in 2^{k_1+1} Q_{A_l}$.
Hence given $l,j \in \mathcal{A}_i$ with $l > j$ we have
\begin{equation*}
( 2 \supp \widehat{\phi}_{l} ) \cap ( 2 \supp \widehat{\phi}_{j} ) \ne \varnothing.   
\end{equation*}
Therefore, we have
\begin{center}
$( 2 \supp \widehat{\phi}_{l} ) \cap ( 2 \supp \widehat{\phi}_{j} ) \ne \varnothing $ and $I_{\sigma(l)} \cap I_{\sigma(j)} \ne \varnothing$
\end{center}
only if $ \abs{ I_{\sigma(l)} } \gtrsim \abs{ I_{\sigma(j)} }$ 
as otherwise Proposition~\ref{prop:almorth-projections} would imply 
\[
2^{k_2+1}Q_{A_l} \supset 2^{k_1+5} Q_{A_{l}} \supset 2^{k_1+1}Q_{A_{j}} \ni \xi_{\sigma(j)},
\]
which in turn would contradict $A_l \in \mathcal{V}_{\sigma(l)}$. 
By the definition of selection $\abs{ I_{\sigma(l)} } \le \abs{ I_{\sigma(j)} }$. 
Moreover, for every fixed $ I_{\sigma(j)}$ there are only up to $C(d,k_0)$ elements $l \in \mathcal{A}_i$ such that $I_{\sigma(j)} = I_{\sigma(l)}$, so we can conclude that for any $i \in \{1,\dots,S\}$

\begin{equation*}
\nos{ \sum_{l \in \mathcal{A}_i} 1_{I_{\sigma(l)}} }_{\infty}
\lesssim 1,
\end{equation*} 
hence $\{I_{\sigma(l)} \colon l \in \mathcal{A}_i \}$ is a Carleson family. By Proposition~\ref{prop:overlap}
\[
\nos{\frac{1}{\abs{ I_{\sigma(i)} }^{1/2}} \sum_{l \in \mathcal{A}_i} 
\abs{ I_{\sigma(l)} }^{1/2} g_{l} \rho_{I_{\sigma(l)}}^{-\alpha} \rho_{I_{\sigma(i)}}^{-\alpha} }_{2} \lesssim 1 
\]
and we have shown the claim for the sum over all $i \in \{1, \dots, S\}$. \qed 
 
\subsection{Proof of Proposition~\ref{prop:ao-single-lac}. Boundary size}

For each $i \in \{1, \dots, S\}$,
we find 
\[
\phi_i \in \Phi_{n}^{4\alpha}(Q_i) 
\]
where $Q_i = Q_{P_i}$ and $P_i$ is a top multitile of $\sigma(i)$ such that 
\[
c_i =  \no{\rho_{I_{\sigma(i)}}^{-\alpha} [ \phi_i* f ]}_2, \quad M\sqrt{\abs{ I_{\sigma(i)} }} \leq c_i \leq D M\sqrt{\abs{ I_{\sigma(i)} }}.
\]
Let
\[
g_i = \frac{\rho_{I_T}^{-\alpha} [ \phi_i * f ] }{\no{\rho_{I_T}^{-\alpha} [ \phi_i* f ]}_2}.
\]
Now
\begin{multline*}
\sum_{i = 1}^S M^2 \abs{ I_{\sigma(i)} } \lesssim 
\sum_{i = 1}^S c_i \abs{ I_{\sigma(i)} }^{1/2} \la{f, \bar \phi_i* (\rho_{I_{\sigma(i)}}^{-\alpha} g_i ) } \\
\le \no{f}_{2} \nos{\sum_{i = 1}^S c_i \abs{ I_{\sigma(i)} }^{1/2}  [ \phi_i*  (\rho_{I_{\sigma(i)}}^{-\alpha} g_i ) ] }_2.
\end{multline*}
Fix $\kappa \in \{0, \ldots,99\}$.
Write $\mathcal{L} = \{i \in \{1, \dots, S\} \colon \log_2 \abs{ I_i }^{1/d} \in \kappa + 100\Z  \}$.
Expanding the square and using the symmetry,
we obtain
\begin{multline*}
\label{eq:2tree-sel-2}
\nos{ \sum_{i \in \mathcal{L} } \abs{ I_{\sigma(i)} }^{1/2} c_i [ \phi_i* (\rho_{I_T}^{-\alpha} g_i ) ] }_2^{2} \\
\lesssim D^2 M^{2} \sum_{i \in \mathcal{L} } \abs{ I_{\sigma(i)} }^{1/2} \sum_{\substack{l \in \mathcal{L}\\ \abs{I_l} \le \abs{I_i} }} 
\abs{I_{\sigma(l)}}^{1/2} \la{g_{l} \rho_{I_{\sigma(l)}}^{-\alpha}, \phi_{l}  * [ \phi_i *(\rho_{I_{\sigma(i)}}^{-\alpha} g_i) ]}.
\end{multline*}

Let 
\[
\mathcal{A}_i  = \{ l \in \mathcal{L}:\ l \in \{ i, \dots, S \} , \ \supp \widehat{\phi}_{l} \cap \supp \widehat{\phi}_{i} \ne \varnothing  \}.
\]
By Proposition~\ref{prop:commute-phi-rho}, Cauchy--Schwarz inequality and 
the estimates for the Hardy--Littlewood maximal function 
as above,
it suffices to prove a bound by constant of 
\[
\nos{\frac{1}{ \abs{ I_{\sigma(i)} }^{1/2}} \sum_{l \in \mathcal{A}_i} 
\abs{ I_{\sigma(l)} }^{1/2} g_{l} \rho_{I_{\sigma(l)}}^{-\alpha} \rho_{I_{\sigma(i)}}^{-\alpha} }_{2} .
\]
Indeed, by triangle inequality we can then sum over $\kappa \in \{0,\ldots,99\}$
to conclude the proof.
By Proposition~\ref{prop:overlap},
it hence remains to show that $\{I_{\sigma(l)} \colon l \in \mathcal{A}_i \}$ is a Carleson family.

Given $l,j \in \mathcal{A}_i$ with $l > j \geq i$, hence $\abs{ I_{\sigma(l)} } \leq \abs{ I_{\sigma(j)} }$, we have
\[
(Q_j)_n \cap  (Q_i)_n  \ne \varnothing , \quad (Q_{l})_n \cap  (Q_i)_n  \ne \varnothing .
\]
Therefore, we have
\[
I_{\sigma(l)} \cap I_{\sigma(j)} \neq \varnothing
\]
only if $\abs{ I_{\sigma(l)} } = \abs{ I_{\sigma(j)} }$ as otherwise Proposition~\ref{prop:almorth-projections}
would imply
\[
2^{k_2+1} Q_{l} 
\supset 2^{k_2+1}Q_j \ni \xi_{\sigma(j)},
\]
which in turn would contradict $P_l \in \mathcal{V}_{\sigma(l)}$.
Therefore $I_{\sigma(j)}$ and $I_{\sigma(l)}$ are pairwise disjoint unless $I_{\sigma(j)} = I_{\sigma(l)}$. However, as above, for every fixed $ I_{\sigma(j)}$ there are only up to $C(d,k_0)$ elements $l \in \mathcal{A}_i$ such that $I_{\sigma(j)} = I_{\sigma(l)}$.
Hence $\{I_{\sigma(l)} \colon l \in \mathcal{A}_i \}$ is a Carleson family
and the proof is complete.  \qed
 
\subsection{Proof of Proposition~\ref{prop:ao-sum-lac}. Sum size}
Consider an index $i \in \{1, \dots, S\}$.
For $\theta \ge 1$,
we define 
\[
C_i(\theta) = \{\xi \in \R^{d} \colon \abs{ \xi-\xi_{\sigma(i)} } \le  \theta(\xi - \xi_{\sigma(i)}) \cdot e \}.
\]
Denote  
\[
a_i(j,\theta) = C_i(\theta) \cap  (Q_n(\xi_{\sigma(i)},2^{-j+1}) \setminus Q_n(\xi_{\sigma(i)},2^{-j})).
\]
We let 
\[
\mathcal{B}_j^{i} = \{ P \in \mathcal{B}_{\sigma(i)} \colon (Q_{P})_n \cap a_{i}(j,2) \ne \varnothing  \}.
\]
We note that if $P \in \mathcal{B}_j^{i}$,
then by Proposition~\ref{prop:lacunary-trivial}
\[
(Q_P)_n \subset \bigcup_{k = j-50k_2}^{j+50k_2} a_i(k,10). 
\]
For each $P \in \mathcal{B}_j^{i}$, 
we find $\phi_{P} \in M_n(\xi_{\sigma(i)},(Q_{P})_n)$ with $\widehat{\phi}_{P} \subset (Q_{P})_n$
such that for 
\[
c_{P} = \no{\rho_{I_{P}}^{-\alpha} [ \phi_{P}*f ]}_2
\]
it holds 
\[
\sum_{j \in \Z} \sum_{P \in \mathcal{B}_j^{i}} c_{P}^{2} \gtrsim M^{2}\abs{ I_{\sigma(i)} }, \quad c_P \leq D M \sqrt{\abs{ I_{P} }}.
\]
Let
\[
g_P = \frac{\rho_{I_P}^{-\alpha} [ \phi_P  * f ]}{\no{\rho_{I_P}^{-\alpha} [ \phi_P * f ] }_2}
\]
if $P \in \mathcal{B}_j^{i}$ for some $i$ and $j$ and $g_P = 0$ otherwise.

Now 
\begin{multline*}
\sum_{i = 1}^S M^{2} \abs{ I_{\sigma(i)} } 
\lesssim \sum_{i = 1}^S \sum_{j \in \Z} \sum_{P\in\mathcal{B}_j^{i}} c_{P} \la{f,\bar \phi_P* (\rho_{I_{P}}^{-\alpha}g_P)} \\
\le \no{f}_{2} \nos{ \sum_{i = 1}^S \sum_{j \in \Z} \sum_{P\in\mathcal{B}_j^{i}} c_{P} [ \phi_P*  (\rho_{I_{P}}^{-\alpha} g_P ) ] }_2.
\end{multline*}
By triangle inequality,
we may restrict the sum over $j \in \Z$ to a sum over $j \in \kappa + 1000k_2 \Z$ 
and integer $\kappa$.
For fixed $\kappa$ and every $i \in \{1,\dots, S\}$ we define
\[
\mathcal{E}^i_\kappa = \bigcup_{j \in \kappa + 1000k_2 \Z} \mathcal{B}_j^{i}.
\]
Squaring the second factor and using symmetry,
we compute
\begin{multline}
\label{eq:3tree-sel-2}
\nos{ \sum_{i = 1}^S \sum_{P\in \mathcal{E}^{i}_\kappa} c_{P} [ \phi_P*  (\rho_{I_{P}}^{-\alpha} g_P ) ] }_2^{2} \lesssim \left( \sup_{P \in \bigcup_{i=1}^S \mathcal{E}^{i}_\kappa}  \frac{c_P^{2}}{\abs{ I_P }} \right) \\
\times \sum_{s = 1}^S \sum_{P\in \mathcal{E}^{s}_\kappa} \abs{ I_P }^{1/2}
\sum_{l = 1}^S \sum_{\substack{P'\in \mathcal{E}^{l}_\kappa\\ \abs{ I_{P'} } \le \abs{ I_P }}} \abs{ I_{P'}}^{1/2}
\la{ \rho_{I_{P'}}^{-\alpha}g_{P'}, \phi_{P'}* [ \phi_P*(\rho_{I_{P}}^{-\alpha} g_P) ] }. 
\end{multline} 
Fix $s$ and $P \in \mathcal{P}_{\sigma(s)}$ and let
\[
\mathcal{A}_{P} =  \left\{ P' \in \bigcup_{j \le 0} \mathcal{B}_{j_{\sigma(s)} + 1000j}^{s} \colon  \supp \widehat{\phi}_{P} \cap \supp \widehat{\phi}_{P'} \ne \varnothing  \right\} .
\]
By Proposition~\ref{prop:lacunary-trivial},
we may apply Proposition~\ref{prop:commute-phi-rho}
to bound 
\[
\phi_{P'}* [ \phi_P*(\rho_{I_{P}}^{-\alpha} g_P) ] \lesssim \rho_{I_P}^{-\alpha} \mathcal{M}_{\HL} g_P.
\]
By Cauchy--Schwarz inequality and the Hardy--Littlewood maximal function theorem as above,
we hence obtain 
\begin{multline}
\label{e:sumpart-single-tile}
\sum_{ P'\in \mathcal{A}_{P} } \abs{ I_{P'} }^{1/2}
\la{ \rho_{I_{P'}}^{-\alpha}g_{P'}, \phi_{P'}* [ \phi_P*(\rho_{I_{P}}^{-\alpha} g_P) ] }\\
\lesssim 
\sum_{ P'\in \mathcal{A}_{P} } \abs{ I_{P'} }^{1/2}
\la{ \rho_{I_{P}}^{-\alpha} \rho_{I_{P'}}^{-\alpha}g_{P'}, \mathcal{M}_{\HL} g_P } \\
\lesssim
 \nos{ \sum_{ P'\in \mathcal{A}_{P} } \abs{ I_{P'} }^{1/2}
\rho_{I_{P}}^{-\alpha} \rho_{I_{P'}}^{-\alpha}g_{P'} }_{2}
\lesssim \abs{ I_{P} }^{1/2} \rho_{I_P} ( \R^{d} \setminus I_{\sigma(s)})
\end{multline}
where the last inequality followed by Proposition~\ref{prop:sumpart-disjointness} below, the fact that for every fixed $ I_{P}$ there are only up to $C(d,k_0)$ elements $P' \in \mathcal{A}_P$ such that $I_{P} = I_{P'}$, and 
Proposition~\ref{prop:overlap}.

\begin{proposition}
\label{prop:sumpart-disjointness}
Assume that $L \in \sigma(l)$, 
$H \in \sigma(h)$
and $L,H \in \mathcal{A}_P$.
Assume additionally that $\abs{ I_H } < \abs{ I_L } < \abs{ I_P }$.
Then 
\[
I_L \cap I_H = \varnothing, \quad (I_L \cup I_H) \cap I_P = \varnothing.
\] 
\end{proposition}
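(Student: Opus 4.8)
The plan is to reduce both displayed disjointnesses to a single pairwise separation principle and then prove that principle by contradiction, reading off the geometry of the selection through Proposition~\ref{prop:almorth-projections}, Proposition~\ref{prop:lacunary-trivial}, and the cone/ordering structure of Proposition~\ref{prop:ao-sum-lac}. The pairwise claim I would isolate is: if $R'$ and $R''$ are boundary multitiles of selected trees $\sigma(r')$ and $\sigma(r'')$, if $2^{c}(Q_{R'})_n\cap 2^{c}(Q_{R''})_n\neq\varnothing$ for a fixed $c=c(k_0)$, and if $|I_{R''}|<|I_{R'}|$, then $I_{R''}\cap I_{R'}=\varnothing$. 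Granting this, $I_L\cap I_P=\varnothing$ and $I_H\cap I_P=\varnothing$ are the instances $(R',R'')=(P,L)$ and $(P,H)$ (here $P$ has the largest spatial cube and $L,H\in\mathcal A_P$ already put $\supp\widehat\phi_L,\supp\widehat\phi_H$ in contact with $\supp\widehat\phi_P\subset(Q_P)_n$), so $(I_L\cup I_H)\cap I_P=\varnothing$ follows; and $I_L\cap I_H=\varnothing$ is the instance $(R',R'')=(L,H)$ once one notes that $|I_P|$ being largest makes $(Q_P)_n$ the smallest of the three boxes, so Proposition~\ref{prop:almorth-projections} (applied with the point of $\Gamma$ in $2^{k_0+1}Q_\bullet$ supplied by the defining inclusions of $\mathcal W$) gives $Q_P\subset 2^{k_0+5}Q_L$ and $Q_P\subset 2^{k_0+5}Q_H$, whence $2^{k_0+5}(Q_L)_n$ and $2^{k_0+5}(Q_H)_n$ share the point of $\supp\widehat\phi_P$. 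Since all cubes in sight are dyadic and $|I_{R''}|<|I_{R'}|$, failure of disjointness forces $I_{R''}\subsetneq I_{R'}$, and this is what I would contradict.

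For the contradiction I would proceed in three steps. First, convert the (dilated) frequency overlap into a box inclusion: as $|I_{R''}|<|I_{R'}|$ makes the $n_*$-side, hence every side, of $Q_{R''}$ exceed those of $Q_{R'}$, Proposition~\ref{prop:almorth-projections} gives $Q_{R'}\subset 2^{a}Q_{R''}$ with $a=a(k_0)$. Second, extract the location of the top frequencies from tree membership: $R'\in\mathcal B_{\sigma(r')}$ gives $\xi_{\sigma(r')}\in 2^{k_2+1}Q_{R'}\setminus 2^{k_1+1}Q_{R'}$ and $I_{R'}\subset I_{\sigma(r')}$, and likewise for $R''$; ordering the two selection steps, say $r'\le r''$, the multitile $R''$ was still available at step $r'$, yet $I_{R''}\subsetneq I_{R'}\subset I_{\sigma(r')}$, so the only way $R''$ escaped being swept into the tree $\sigma(r')$ is that $\xi_{\sigma(r')}\notin 2^{k_2+1}Q_{R''}$ (the reverse order is symmetric, exchanging the roles of $R'$ and $R''$ and of their host tops; when the two multitiles belong to one and the same selected tree, the same metric geometry applies with the common top playing both roles and the common frequency annulus of $\mathcal A_P$ providing the extra constraint). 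Third, combine $Q_{R'}\subset 2^{a}Q_{R''}$ with Proposition~\ref{prop:lacunary-trivial} (keeping $(Q_{R''})_n$, hence $(Q_{R'})_n$, at distance $\sim 2^{v_n-j_{R''}-k_2}$ at least from $(\xi_{\sigma(r')})_n$), with the cone restrictions $(Q_{R'})_n\subset C_{\sigma(r')}$ and $(Q_{R''})_n\subset C_{\sigma(r'')}$ up to bounded dilation, and with the monotonicity of $\xi_{\sigma(i)}\cdot e$, to pin $\xi_{\sigma(r')}$ down tightly enough relative to the small box $Q_{R'}$ to conclude $\xi_{\sigma(r')}\in 2^{k_1+1}Q_{R'}$, contradicting $R'\in\mathcal B_{\sigma(r')}$.

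The part I expect to be the genuine obstacle is the constant bookkeeping in this last step: the argument accrues several dilation losses—the $2^{a}$ of Proposition~\ref{prop:almorth-projections}, the $2^{k_2}$ slack of Proposition~\ref{prop:lacunary-trivial}, and the widening incurred in passing between the coordinate $n$ of the frequency-overlap hypothesis, the coordinate $n_*$ tying a box to its spatial scale, and the various dilates—and the final inclusion $\xi_{\sigma(r')}\in 2^{k_1+1}Q_{R'}$ only survives because the separations $k_1-k_0>100d$ and $k_2-k_1>100d$ dominate all of them. A lesser point to be careful about is the case split on whether step $r'$ precedes step $r''$ or vice versa, and the degenerate case in which both multitiles are boundary multitiles of a single selected tree; these are mirror images of, respectively each other and a slight simplification of, the main case.
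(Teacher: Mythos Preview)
Your ingredients are the right ones, but they are assembled in the wrong order, and this creates a genuine gap. The claim that the case $r''<r'$ is ``symmetric'' to the case $r'\le r''$ is false: the hypothesis $|I_{R''}|<|I_{R'}|$ is not symmetric, and in the reverse order the inclusion $I_{R''}\subsetneq I_{R'}$ does \emph{not} give $I_{R'}\subset I_{\sigma(r'')}$, so you cannot conclude that $R'$ should have been swept into $\sigma(r'')$. Your step~3 attempts to repair this by deriving $\xi_{\sigma(r')}\in 2^{k_1+1}Q_{R'}$, but that conclusion does not follow from the listed inputs: the cone constraints and Proposition~\ref{prop:lacunary-trivial} locate $(Q_{R'})_n$ and $(Q_{R''})_n$ relative to $(\xi_{\sigma(r')})_n$ and $(\xi_{\sigma(r'')})_n$ respectively, but nothing forces $\xi_{\sigma(r')}$ closer to $Q_{R'}$ than the boundary annulus already places it.

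The fix---and this is exactly what the paper does---is to use the cone geometry \emph{first}, to determine the order of selection rather than as an auxiliary constraint inside the contradiction. The frequency overlap plus the $1000k_2$ scale separation give the chain $2^{k_2+1}Q_P\subset 2^{k_2+1}Q_L\subset 2^{k_2+1}Q_H$ outright (this is your step~1, done with the large separation). Since $(Q_P)_n$ lies in the annulus $a_s(\cdot,10)$ and $(Q_L)_n$ in $a_l(\cdot,10)$ at a scale at least $1000k_2$ coarser, the overlap forces $\xi_{\sigma(s)}\cdot e>\xi_{\sigma(l)}\cdot e$, hence $s<l$ by the selection rule; similarly $l<h$. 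Now only the case ``larger cube selected first'' occurs, and your step~2 finishes immediately: if $I_L\subset I_P\subset I_{\sigma(s)}$ then $L\in\mathcal P_{\sigma(s)}$ because $\xi_{\sigma(s)}\in 2^{k_2+1}Q_L$, contradicting $L\in\mathcal V_{\sigma(l)}$; if $I_H\subset I_L\subset I_{\sigma(l)}$ then $\xi_{\sigma(l)}\in 2^{k_2+1}Q_L\subset 2^{k_2+1}Q_H$ gives $H\in\mathcal P_{\sigma(l)}$, contradicting $H\in\mathcal V_{\sigma(h)}$. No step~3 is needed.
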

\begin{proof}
Because $L,H \in \mathcal{A}_P$,
\[
(Q_L)_n  \cap    (Q_P)_n   \ne \varnothing, \quad  (Q_H)_n \cap (Q_P)_n \ne \varnothing.
\]
As $2^{2000k_2} \abs{ I_H } \leq 2^{1000k_2} \abs{ I_L } \leq \abs{ I_P }$,
this implies by Proposition~\ref{prop:almorth-projections} 
\begin{equation}
\label{e:sumpart-disjointness}
\xi_{\sigma(s)} \in 2^{k_2+1} Q_{P} \subset 2^{k_2+1} Q_{L} \subset 2^{k_2+1} Q_{H}.
\end{equation}
Further, 
\[
a_l(j,10) \cap a_h(j',10) \ne \varnothing
\]
with $j' \le j - 10k_2$ only if $\xi_{\sigma(l)} \cdot e > \xi_{\sigma(h)} \cdot e $.
Hence $s < l < h$ and the claim follows by \eqref{e:sumpart-disjointness} as otherwise it would contradict $L \in \mathcal{V}_{\sigma(l)}$ and $H \in \mathcal{V}_{\sigma(h)}$.
\end{proof}

Applying the estimate \eqref{e:sumpart-single-tile} to the second factor on the right hand side of \eqref{eq:3tree-sel-2},
we obtain  
\begin{multline*}
\sum_{s = 1}^S \sum_{P\in 
\mathcal{E}^{s}_\kappa }\abs{ I_P }^{1/2} \nos{ 
\sum_{P' \in \mathcal{A}_P  } \abs{ I_{P'} }^{1/2}
\rho_{I_{P}}^{-\alpha} \rho_{I_{P'}}^{-\alpha}g_{P'} }_{2} \\
\lesssim \sum_{s = 1}^S \sum_{P\in\mathcal{P}_{\sigma(s)}} \abs{ I_P } \rho_{I_{P}}^{-\alpha}(\R^{d} \setminus I_{\sigma(s)}) 
\lesssim \sum_{s = 1}^S \abs{ I_{\sigma(s)} } \sum_{k=0}^{\infty} k2^{-k}
\lesssim  \sum_{s = 1}^S \abs{ I_{\sigma(s)} } . 
\end{multline*}
This concludes the proof. \qed

\subsection{Proof of Proposition~\ref{prop:induction-step}. Recursion} 
To streamline the language,
we introduce the following definition.
\begin{definition}[Admissible tree]
Let $\mathcal{V}$ be a finite subset of multitiles.
A tree $T = T(\xi,I_0,\mathcal{V})$ is said to be $n$-admissible with respect to 
boundary size if 
\[
\Sigma_{n,2,f_n}^{\BDR}(T) \le  \Sigma_{n,2,f_n}^{\BDR,\TOP}(T) .
\]  
It is said to be 
be $n$-admissible with respect to 
core size if 
\[
\Sigma_{n,2,f_n}^{\COR}(T) \le \Sigma_{n,2,f_n}^{\COR,\TOP}(T) .
\]  
\end{definition}

\begin{proposition}
Let $N,N' > 0$. The family of multitiles 
\[
\mathcal{V} = \{ P \colon Q_P \in \mathcal{W}_N , \ I_P \subset [ - N' 2^N, N' 2^N ]^{3 \times d} \}
\]
with $\mathcal{W}$ in Proposition~\ref{whitneyprop} is a convex collection.
If $\sigma$ is a selection on a convex collection,
then $\mathcal{V}_{\sigma(i)}$ is a convex collection for all $i \in \{1, \dots, S\}$.
\end{proposition}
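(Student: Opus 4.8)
The plan is to unwind the definition of convex collection into two requirements on every tree $T$ on the family — \emph{(C1)} for each integer $j$ with $\min_{P\in\mathcal{P}_T}\log_2\abs{I_P}^{1/d}\le j\le j_T$ there is a $P\in\mathcal{P}_T$ with $\abs{I_P}=2^{jd}$, and \emph{(C2)} if $2^{k_1+1}Q_P\ni\xi_T$ for some $P\in\mathcal{P}_T$ then $2^{k_1+1}Q_{P'}\ni\xi_T$ for some $P'\in\mathcal{P}_T$ with $I_{P'}=I_T$ — and to verify these first for $\mathcal{V}$ and then to propagate them along a selection. Finiteness of $\mathcal{V}$, and hence of every $\mathcal{V}_{\sigma(i)}$, is immediate since $\mathcal{W}_N$ is finite and the spatial localization is bounded. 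The second assertion will be reduced to the one-step statement that, \emph{if $\mathcal{V}$ is convex and $T$ is a tree on $\mathcal{V}$, then $\mathcal{V}\setminus\mathcal{P}_T$ is convex}: since $\mathcal{V}_{\sigma(1)}=\mathcal{V}$ and $\mathcal{V}_{\sigma(i+1)}=\mathcal{V}_{\sigma(i)}\setminus\mathcal{P}_{\sigma(i)}$ with $\sigma(i)$ a tree on $\mathcal{V}_{\sigma(i)}$, induction on $i$ then closes the argument.

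For $\mathcal{V}$ itself, both (C1) and (C2) will follow from the Whitney structure of $\mathcal{W}$ near the point $\xi_T\in\Gamma$. Fixing a tree $T$ on $\mathcal{V}$, its top multitile $P_0$ (so $I_{P_0}=I_T$) and some multitile $P_{\mathrm{min}}$ of minimal scale both lie in $\mathcal{V}$, so $I_T$ lies in the spatial truncation and $Q_{P_0},Q_{P_{\mathrm{min}}}\in\mathcal{W}_N$. For (C1) I would, given $j$ in range, take on the spatial side any dyadic $I\subset I_T$ with $\abs{I}=2^{jd}$ — automatically in the truncation — and on the frequency side invoke maximality of $\mathcal{W}$ together with its defining inequalities: since $\xi_T\in\Gamma$ and a cube of $\mathcal{W}$ at the coarsest present scale already has $\xi_T$ in its $2^{k_2+1}$-dilate, at every finer scale $2^{-(j+1)}$ there is a $Q\in\mathcal{W}$ with $\xi_T\in 2^{k_0+O(1)}Q\subset 2^{k_2+1}Q$, using $k_2>k_1>k_0$ with large gaps; Proposition~\ref{prop:almorth-projections} bounds the centre and scale index of $Q$ by those of $Q_{P_0}$ and $Q_{P_{\mathrm{min}}}$, so $Q\in\mathcal{W}_N$ and $P=I\times Q\in\mathcal{P}_T$. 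For (C2), running the same construction at the top scale $j_T$, starting from the multitile witnessing $2^{k_1+1}Q_P\ni\xi_T$, produces a cube $Q'\in\mathcal{W}_N$ at scale $2^{-(j_T+1)}$ with $\xi_T\in 2^{k_1+1}Q'$, and $I_T\times Q'$ is the required top multitile.

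For the one-step statement, let $T'$ be a tree on $\mathcal{V}':=\mathcal{V}\setminus\mathcal{P}_T$. As $\mathcal{P}_{T'}\ne\varnothing$, the triple $(\xi_{T'},I_{T'},\mathcal{V})$ is also a tree $\widetilde{T'}$ on $\mathcal{V}$, and $\mathcal{P}_{T'}=\mathcal{P}_{\widetilde{T'}}\setminus\mathcal{P}_T$. If $I_{T'}\cap I_T=\varnothing$ then $\mathcal{P}_{\widetilde{T'}}\cap\mathcal{P}_T=\varnothing$ and (C1), (C2) for $T'$ coincide with those for $\widetilde{T'}$. Otherwise $I_{T'}$ and $I_T$ are nested. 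If $I_T\subsetneq I_{T'}$, then every removed multitile has $I_P\subset I_T\subsetneq I_{T'}$, so no top multitile of $\widetilde{T'}$ (those with $I_P=I_{T'}$) is removed — which yields (C2) — and at each present scale $j\le j_{T'}$ the removed multitiles are confined to the proper sub-cube $I_T$ of $I_{T'}$, leaving a survivor at that scale; with (C1) for $\widetilde{T'}$ this gives (C1) for $T'$. The remaining case $I_{T'}\subseteq I_T$ is the crux: now every $P\in\mathcal{P}_{\widetilde{T'}}$ automatically has $I_P\subseteq I_T$, so $P$ is removed exactly when $\xi_T\in 2^{k_2+1}Q_P$. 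The heart of the matter is to show, using Proposition~\ref{prop:almorth-projections} to compare the frequency cubes of $\widetilde{T'}$ — which are nested up to a bounded dilation and shrink towards $\xi_{T'}$ — with the two boundary points $\xi_{T'},\xi_T\in\Gamma$, that the removed multitiles form a set downward closed for the scale order: if $P\in\mathcal{P}_{\widetilde{T'}}$ is removed and $P''\in\mathcal{P}_{\widetilde{T'}}$ has $\abs{I_{P''}}\le\abs{I_P}$, then $P''$ is removed as well. Granting this, there is a threshold scale $j^{\ddagger}$ with every multitile of $\widetilde{T'}$ of scale $\le j^{\ddagger}$ removed and every one of scale $>j^{\ddagger}$ surviving; as $\mathcal{P}_{T'}\ne\varnothing$ we have $j^{\ddagger}<j_{T'}$, so the top multitiles survive and (C2) for $\widetilde{T'}$ gives (C2) for $T'$, while deleting the initial segment of scales from the interval furnished by (C1) for $\widetilde{T'}$ leaves an interval, which is (C1) for $T'$.

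The main obstacle is this downward-closedness in the crux case: quantifying, from the geometry of the Whitney cubes and the large separation of $k_0<k_1<k_2$, that deleting the multitiles of $\widetilde{T'}$ whose frequency cube also contains $\xi_T$ in its $2^{k_2+1}$-dilate cannot open a gap in the scales of $\mathcal{P}_{T'}$. Once Proposition~\ref{prop:almorth-projections} and the truncation bookkeeping for $\mathcal{V}$ are in place, the disjoint and the $I_T\subsetneq I_{T'}$ cases, as well as the reduction to the one-step statement, are routine.
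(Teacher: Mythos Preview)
The paper's own proof is the single word ``Clear''; you have written out far more than the authors did, and your overall plan --- reduce to the one-step removal $\mathcal{V}\mapsto\mathcal{V}\setminus\mathcal{P}_T$ and split on the relative position of $I_T$ and $I_{T'}$ --- is the natural one.

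There is, however, a genuine gap in your crux case $I_{T'}\subseteq I_T$. Your downward-closedness assertion is too strong: you claim that if some $P\in\mathcal{P}_{\widetilde{T'}}$ is removed (i.e.\ $\xi_T\in 2^{k_2+1}Q_P$) then \emph{every} $P''\in\mathcal{P}_{\widetilde{T'}}$ with $\abs{I_{P''}}\le\abs{I_P}$ is removed. But Proposition~\ref{prop:almorth-projections} applied to $Q_{P''}$ and $Q_P$ with $a=k_2+1$ only yields $Q_P\subset 2^{k_2+5}Q_{P''}$, which places $\xi_T$ in $2^{k_2+6}Q_{P''}$, not in $2^{k_2+1}Q_{P''}$. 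Concretely, at nearby scales there can be several Whitney cubes $Q$ with $\xi_{T'}\in 2^{k_2+1}Q$, and one with $\xi_{T'}$ sitting near the boundary of $2^{k_2+1}Q$ on the side away from $\xi_T$ may survive even though a finer cube has already captured $\xi_T$. So the clean threshold picture you describe need not hold tile-by-tile.

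What (C1) actually requires is only that the set of scales at which \emph{some} multitile of $\widetilde{T'}$ survives is an interval. One way to recover this is to track, at each scale, a Whitney cube $Q$ with $\xi_{T'}\in 2^{k_0+O(1)}Q$ (these exist by maximality of $\mathcal{W}$ and $k_0\ge 3$); along that single chain the condition $\xi_T\in 2^{k_2+1}Q$ is genuinely monotone because the slack $2^{k_0+O(1)}$ is absorbed by $k_2-k_0>100d$, and the surviving top multitile of $T'$ fixes which side of the threshold $j_{T'}$ lies on. Your (C2) argument and the $\mathcal{W}_N$ bookkeeping (where you invoke Proposition~\ref{prop:almorth-projections} for a centre bound it does not directly give) need analogous adjustments.
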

\begin{proof}
Clear.
\end{proof}

Now we can proceed to the actual proof.
We define the selection on $\mathcal{V}$ as follows. 
For notational purposes,
we set $I_{\sigma(0)} = \R^{d}$, 
$\mathcal{P}_{\sigma(0)} = \varnothing$ and
$\mathcal{V}_{\sigma(0)} = \mathcal{V}$.
Finally, without loss of generality and only for notational convenience, 
assume $\no{f_n}_{2} = 1$ for all $n$.
 
Suppose $\sigma(i-1)$ has been defined.
A tree $T$ is called $X$-tree if it holds $ X(T)\ge 2^{(M-10d)/2}$ for some size $X$.
We first define the selection by choosing repeatedly $\Sigma_{n,2,f_n}^{\COR,\TOP}$-trees with $n=1$
such that trees with larger top cubes are chosen first,
only admissible trees are chosen
and only trees $T$ with $\Theta(T) \ne 0$ are chosen.
We denote by $s_n$ the number of steps at which we reach the last tree chosen.
This number is finite as there are only finitely many multitiles in the original collection $\mathcal{V}$.

We replace $\mathcal{V}$ with $\mathcal{V}_{\sigma(s_n)}\setminus\mathcal{P}_{\sigma(s_n)}$. We repeat the same process with $X$ replaced by $\Sigma_{n,2,f_n}^{\COR,\TOP}$ first with $n = 2$ and then with $n=3$.
This way,
we create three selections.
The first of them is $\sigma_1$ on $\mathcal{V}$.
The second one is $\sigma_2$ on $\mathcal{V}_{\sigma(s_1)}\setminus\mathcal{P}_{\sigma(s_1)}$
and the third one is $\sigma_3$ on $\mathcal{V}_{\sigma(s_2)}\setminus\mathcal{P}_{\sigma(s_2)}$.
By Proposition~\ref{prop:ao-core},
each of these selections satisfy \eqref{e:induction-step-overlap}. 
Set
\[
\mathcal{V}_1 = \mathcal{V} \setminus \bigcup_{n=1}^{3} \bigcup_{i = s_{n-1}+1 }^{s_{n}} \mathcal{P}_{ \sigma(i)}.
\]
A tree $T$ on $\mathcal{V}_1$
is either inadmissible or satisfies 
\[
 \Theta(T)\Sigma_{n,2,f_n}^{\COR,\TOP}(T) \le 2^{(M-10d)/2}.
\]
For admissible trees the latter condition is the desired size bound.
On the other hand,
if an inadmissible tree violates the size bound,
then by convexity of the reference family it contains an admissible subtree violating the size bound.
But this possibility was just ruled out.
This concludes the treatise with respect to the core size.

We repeat the same selection process with $\Sigma_{n,2,f_n}^{\BDR,\TOP}$ instead of $\Theta \Sigma_{n,2,f_n}^{\COR,\TOP}$
and this gives us three more selections $\sigma_{4}$, $\sigma_{5}$ and $\sigma_{6}$ and a family $\mathcal{V}_2$
such that the trees in selections satisfy \eqref{e:induction-step-overlap} by Proposition~\ref{prop:ao-single-lac}
and by the argument as in the case of the core size also the size bound is valid.

It remains to treat the sum size.
Let $\{e_{\delta} \colon 1 \le \delta \le 2d  \}$ be the unit vectors orthogonal to the $(d-1)$-dimensional hyperplanes.
Let $\mu^{\delta,n} \in M_n(C_{e_{\delta}})$ with 
\[
\sum_{\delta=1}^{2d} \widehat{\mu}^{\delta,n}(\xi) = 1, \quad \xi \ne 0.
\] 
For a tree $T$ on $\mathcal{V}_2$ or any of its subfamilies,
we set $\widehat{\mu}^{\delta}_{T,n}(\xi) = \widehat{\mu}^{\delta,n}(\xi-(\xi_T)_n)$.
We run the selection choosing trees $T$ such that
\[
\Sigma_{n,2,\mu^{\delta,n} * f_n}^{\SUM}(T) \ge 2^{(M-10d)/2}
\]
so that ones with maximal $e_{\delta}\cdot (\xi_{T})_n$ are chosen first.
Again,
we repeat this process for each $n$ and each $\delta$.
Each of the $6d$-selections satisfy the hypotheses of Proposition~\ref{prop:ao-sum-lac},
and the trees not chosen satisfy \eqref{eq:sele-2}. 
Collecting the trees in all of the selections constructed so far,
we obtain the family $\mathcal{T}_{M}$ and the proof is complete. \qed

\subsection{Conclusion of the proof of Proposition~\ref{almostorthogonal}}
Because the family of multitiles $\mathcal{V}$ in the hypothesis of the proposition is finite,
there exists $M$ such that the hypothesis of Proposition~\ref{prop:induction-step} holds.
The claim except for \eqref{e:size-by-linfty} follows by induction.

To prove \eqref{e:size-by-linfty},
we first note that for any tree $T$ and any $p \in [1,\infty]$
\[
\Sigma_{n,p,f_n}^{\COR}(T) + \Sigma_{n,p,f_n}^{\BDR}(T) \le C(d) \no{f_n}_{\infty}
\]
is obvious. 
It remains to bound the sum size.

Let $\eta$ be a smooth function with $\eta \gtrsim 1_{I_T}$ and $\supp \widehat{\eta} \subset Q_{n_{*}}(0,2^{-j_T})$.
Let $\{\varphi_P \in \Phi_{n}^{4\alpha}(Q_P) \colon P \in \mathcal{B}_T \}$ be functions to almost achieve the supremum in the definition of the sum size. 
First, we note 
\begin{multline*}
\sum_{P \in \mathcal{B}_T} \no{1_{I_P} [ \varphi_P *( 1_{\Rd \setminus 3I_T} f_n ) ] }_{2}^{2} \\
\lesssim 
\no{f_n}_{\infty} \sum_{P \in \mathcal{B}_T} \abs{ I_{I_P} } \rho_{I_P}^{-\alpha}(\Rd \setminus 3I_T)
\lesssim \abs{ I_T } \no{f_n}_{\infty} .
\end{multline*}
Second, we note 
\begin{equation}
\label{e:sum-inproof-1}
\sum_{P \in \mathcal{B}_T} \no{1_{I_P} [ \varphi_P *( 1_{3I_T} f_n ) ] }_{2}^{2}
\lesssim \sum_{k = k_1}^{k_2} \sum_{j \leq j_T}  \no{1_T [ \varphi_{j,k} *( 1_{3I_T} f_n ) ] }_{2}^{2}
\end{equation}
for a family of sequences $\{ \{ \varphi_{j,k} \in \Psi_{n,j - k}^{4\alpha}(\xi_T) \colon j \leq j_T \} \colon k \in \{k_1, \dots, k_2 \} \} $. Without loss of generality we fix $k$ and we drop it from notation.
For terms with $j_T - j \le 100$,
we estimate 
\[
\no{1_T [ \varphi_j *( 1_{3I_T} f_n ) ]}_{2}^{2} \lesssim \abs{ I_T } \no{f_n}_{\infty}
\]
as in the cases of core and boundary sizes.
For terms with $j_T - j > 100$
we note that 
\[
[ \R^d \setminus Q_n(\xi_{T},2^{-j + k -100}) ] \supset ( \supp \widehat{\eta} + \supp \widehat{\varphi}_j ) \supset \supp ( \widehat{\eta} * \widehat{\varphi}_j ). 
\]

Consequently it holds
$\supp( \widehat{\eta} * \widehat{\varphi}_j ) \cap \supp ( \widehat{\eta} * \widehat{\varphi}_{j'} ) = \varnothing$
whenever $\abs{ j-j' } \ge 100$, 
and $\sum_{j < j_T - 100} \widehat{\eta} * \widehat{\varphi}_j$
is a Mikhlin multiplier with bounds only depending on the dimension.

We bound each inner sum in \eqref{e:sum-inproof-1}
by 
\begin{multline*}
 \sum_{j < j_T - 100}  \no{\eta [ \varphi_j *( 1_{3I_T} f_n ) ] }_{2}^{2}
\lesssim \sum_{l=1}^{100} \sum_{\substack{j \in l + 100 \Z\\ j < j_T - 100 }}  \nos{ \eta [ \varphi_j *( 1_{3I_T} f_n ) ] }_{2}^{2} \\
\lesssim \sum_{l=1}^{100}  \nos{ \eta \sum_{\substack{j \in l + 100 \Z\\ j < j_T - 100 }}  [ \varphi_j *( 1_{3I_T} f_n ) ] }_{2}^{2}
\lesssim \no{1_{3I_T}f_n}_{2}^{2}
\end{multline*}
where the last step followed by the Mikhlin multiplier theorem.
The claim then follows. \qed

\section{Proof of Proposition~\ref{whitneyprop}. Tensorized model form} 
\label{s:whitneyprop}

By dominated convergence, there exists $N' = N' (d, \alpha, \varepsilon, k_0, f_n, N) > 0$ such that for the finite subset of multitiles 
$ \mathcal{V} = \{ P \colon Q_P \in \mathcal{W}_N , \ I_P \subset [ - N' 2^N, N' 2^N ]^{3 \times d} \} $ we have
\begin{multline*}
  \abs{ \sum_{ Q\in \mathcal{W}_N } 
  \int_{\R^d} \prod_{n=1}^3 [ \phi_{Q,n} * {f}_n (x)] \, \diff x } 
  \\
\leq 2 
 \abs{ \sum_{P \in \mathcal{V}} \int_{\R^{d}} 1_{I_P}(x) \prod_{n=1}^{3} [ \phi_{P,n}* f_n (x) ] \, \diff x}  .
\end{multline*}

Consider the families of trees $\mathcal{T}_M$ as in Proposition~\ref{almostorthogonal}.
By triangle inequality, 
we get an upper bound by
\begin{equation*}
\label{E:main2}
%
 \sum_{M \in \N \cup \{-\infty\} } \sum_{T \in \mathcal{T}_M} \abs{ \sum_{P \in \mathcal{P}_T} \int_{\R^{d}} 1_{I_P}(x) \prod_{n=1}^{3} [ \phi_{P,n}* f_n (x) ] \, \diff x}  .
\end{equation*}
By Proposition~\ref{thm:tree-estimate},
we get an upper bound by 
\begin{equation}
\label{e:whitney-proof-1}
\sum_{M \in \N \cup \{-\infty\} } \sum_{T \in \mathcal{T}_M} \abs{ I_T } \left( \Sigma_{n_{*}, 
\infty,f_{n_{*}}}^{\BDR}(T)   \prod_{n \ne n_{*}} \Sigma_{n,f_n}^{\SUM}(T)  +  \prod_{n = 1}^{3} \Sigma_{n,q_n,f_n}^{\COR}(T) \right),
\end{equation}
where the second summand in brackets appears if and only if $\mathcal{P}_T \setminus \mathcal{B}_T \neq \varnothing$, i.e. if there exists $P \in \mathcal{P}_T$ with $2^{k_1+1}Q_P \ni \xi_{T}$.

By log-convexity and \eqref{e:size-by-linfty} from Proposition~\ref{almostorthogonal},
we have  
\begin{equation*}
\Sigma_{n,q_{n},f_n}^{\COR}(T) \lesssim \Sigma_{n,2,f_n}^{\COR}(T)^{2/q_{n}}, \quad \Sigma_{n,f_n}^{\SUM}(T) \lesssim \Sigma_{n,f_n}^{\SUM}(T)^{2/q_{n}}.
\end{equation*}
By the local Bernstein's inequality (see e.g. Proposition~1.2 in \cite{fraccaroli2022phase})
\[
\Sigma_{n,\infty,f_n}^{\BDR}(T) \lesssim 2^{dv_n/2} \Sigma_{n,2,f_n}^{\BDR}(T), \quad \Sigma_{n,q_{n},f_n}^{\COR}(T) \lesssim 2^{dv_n (1/2 - 1/q_n )} \Sigma_{n,2,f_n}^{\COR}(T) .
\]
In addition,
we know by Proposition~\ref{almostorthogonal} that for all $n \in \{1,2,3\}$ and $T \in \mathcal{T}_M$
\begin{align*}
\Sigma_{n,2,f_n}^{\BDR}(T) & \lesssim   \min \{1,2^{M/2} \no{f_n}_2 \}, \\
\Sigma_{n,f_n}^{\SUM}(T) & \leq 2^{M/2} \no{f_n}_2, \\
\Sigma_{n,2,f_n}^{\COR}(T) & \lesssim 1.
\end{align*}
Moreover, if there exists $P \in \mathcal{P}_T$ with $2^{k_1+1}Q_P \ni \xi_{T}$ we also have
\[
\Sigma_{n,2,f_n}^{\COR}(T) \le 2^{M/2} \no{f_n}_2 .
\]
Recalling that there is $n_{*}$ such that $v_{n_{*}} = 0$,
we hence bound \eqref{e:whitney-proof-1} by 
\begin{multline*}
\sum_{M \in \N \cup \{-\infty\} } \sum_{T \in \mathcal{T}_M} \abs{I_T} \min \{1,2^{M/2} \no{f_{n_{*}}}_2 \} \prod_{n \ne n_{*}} 2^{M/q_{n}} \no{f_{n}}_2^{2/q_{n}} \\
\lesssim \sum_{M \in \N \cup \{-\infty\} }  \min \{2^{-M/q_{n_{*}}},2^{M(1/2-1/q_{n_{*}})} \no{f_{n_{*}}}_2 \} \prod_{n \ne n_{*}} \no{f_{n}}_2^{2/q_{n}} \\
\lesssim \prod_{n =1}^{3} \no{f_{n}}_2^{2/q_{n}}
\end{multline*}
where the first inequality used \eqref{e:almorth-overlap} from Proposition~\ref{almostorthogonal}.
This concludes the proof of Proposition~\ref{whitneyprop}. \qed

\section{Proof of  Theorem~\ref{main-theorem}}
\label{s:main-theorem}
 
The integral 
\begin{equation}
\label{eq:main-thm-int}
\int_{\R^{3 \times d}} \delta_0(\xi_1+\xi_2+\xi_3) \widehat{f}_1(\xi_1)\widehat{f}_2(\xi_2)\widehat{f}_3(\xi_3) m(L^{-1} \xi) \, \diff \xi
\end{equation}
is absolutely convergent for Schwartz functions $f_1$,$f_2$ and $f_3$. 
Approximating $m$ with a symbol supported in a compact set not meeting 
$\{ (\tau, \tau, \tau ) \colon \tau \in \R^d \}$,
we conclude by the Lebesgue dominated convergence theorem,
boundedness of $m$
and absolute convergence of the integral 
that it suffices to assume $m$ is compactly supported.

By multilinear interpolation \cite{MR0942274},
it suffices to prove a bound for the integral \eqref{eq:main-thm-int} by 
\[
C \prod_{n=1}^{3} \no{f_n}_{q_n} 
\]
when $f_n = 1_{E_n}$ for a measurable sets $E_n$ of finite measure 
and $C$ is a constant independent of all $E_n$ and $m$.
Because $m$ is assumed to be compactly supported,
the integral \eqref{eq:main-thm-int} is absolutely convergent even with $f_n = 1_{E_n}$.
By standard convolution approximation and dominated convergence theorem,
we see that it suffices to bound the integral \eqref{eq:main-thm-int} by 
\[
C \prod_{n=1}^{3} \abs{ E_n }^{1/q_n}
\]
whenever $f_n$ is a smooth function with 
\[
\no{f_n}_{\infty} \le 2, \quad \no{f_n}_2 \le 2 \abs{ E_n }^{1/2}.
\]
Indeed, the convolution mollification converges in all $L^{p}$ norms with $p$ finite,
in particular with $p \in \{2,q_n\}$.
We see that for each $n \in \{1,2,3\}$ the function $f_n$ satisfies the assumptions on Proposition~\ref{whitneyprop}.

Next we form a Whitney type decomposition of $\R^{3 \times d} \setminus \Gamma$.
For each $\xi \in \R^{3 \times d} \setminus \Gamma$,
set 
\[
r_\xi = \frac{3}{4} \inf \{r > 0  \colon Q(\xi,r) \cap \Gamma \ne \varnothing \},
\]
where $Q(\xi,r) \subset \R^{3 \times d}$ is the open rectangular box defined in Section~\ref{sec:out}.
Let 
\[
\mathcal{A} = \{ Q(\xi,r)  \colon r = 
2^{-k_0}
r_\xi  \}.
\]
We let $\mathcal{W}$ be a maximal pairwise disjoint family of $Q \in \mathcal{A}$
such that $Q \cap [-2^{N},2^{N}]^{3 \times d} \neq \varnothing$, where $N\ge 100$ is an integer such that $\supp m \subset L^{-1} ( [-2^{N},2^{N}]^{3 \times d} )$.
It is clear that 
for each fixed $k \leq k_0$
\[
\{2^{k} Q  \colon Q \in \mathcal{W} \}
\]
has bounded overlap.
Note that 
\[
\supp m \subset L^{-1} \left( \bigcup_{ Q \in \mathcal{W} } 5 Q \right) .
\] 

For $Q \in \mathcal{W}$,
we denote $Q_n = \{\xi_n \in \R^d \colon \xi \in Q\}$.
Let $\{\eta_{Q} \colon Q \in \mathcal{W}\} $ form a partition of unity adapted to $\mathcal{W}$,
meaning that for each $Q \in \mathcal{W}$ the smooth function $\eta_Q \ge 0$ is supported in $6Q$
and satisfies the bounds 
\[
\abs{ \partial_1^{\gamma_1} \partial_2^{\gamma_2} \partial_3^{\gamma_3} \eta_Q (\xi) } \le C_{\gamma} \abs{Q_1}^{-\abs{\gamma_1}/d} \abs{Q_2}^{-\abs{\gamma_2}/d} \abs{Q_3}^{-\abs{\gamma_3}/d}
\]
for constants $C_{\gamma}$ only depending on $\gamma = (\gamma_1,\gamma_2,\gamma_3) \in \N^{3\times d}$.

Let $\chi_{Q,n}$ be a smooth function with 
\[
1_{7Q_n} \le \chi_{Q,n} \le 1_{8Q_n}, \quad   \abs{ \partial^{\gamma} \chi_{Q,n} ( \tau) } \le C_{\gamma} \abs{Q_n}^{-\abs{\gamma}/d}
\] 
for all $\gamma \in \N^{ d}$ and $\abs{ \gamma }\le 100d$.
Let $\chi_Q(\xi) = \chi_{Q,1} (\xi_1)\chi_{Q,2} (\xi_2)\chi_{Q,3}(\xi_3)$ for $\xi \in \R^{3\times d}$.

Let $A_Q$ be a linear mapping sending $7Q - \mathfrak{P} (\xi_Q)$ into $[0,2\pi)^{3 \times d}$ and such that $A_Q ( [ 8Q - \mathfrak{P} (\xi_Q) ] ) \setminus (-2\pi,2\pi)^{3 \times d} \neq \varnothing$, where $\xi_Q$ is such that $Q = Q(\xi_Q,r)$ and $\mathfrak{P}$ is the orthogonal projection of $\R^{3 \times d}$ onto $\Gamma$.
Such a matrix is of block form $A_Q = A_{Q,1} \oplus A_{Q,2} \oplus A_{Q,3}$.
We expand as a Fourier series 
\[
m_Q( L^{-1} \xi) \coloneq \eta_{Q}(\xi)m( L^{-1} \xi ) = \chi_Q(\xi) \sum_{k \in \Z^{3\times d} } a_{Q,k} \prod_{n=1}^3 e^{2 \pi i k_n \cdot A_n \xi_n }
\]
so that $m = \sum_{Q \in \mathcal{W}} m_Q$.

Denote $m_{Q,k,n} ( L^{-1}_n \xi_n) = \chi_{Q,n} (\xi_n) e^{2\pi i k_n \cdot A_n \xi_n}$ and $a_k = \sup_{Q \in \mathcal{W}} \abs{a_{Q,k}}$. For the function $ \phi_{Q,k,n}$ defined by 
\[ 
\widehat{\phi}_{Q,k,n} (\tau) = (1 + \abs{k_n})^{- 4 \alpha} m_{Q,k,n} ( L^{-1}_{n} \tau)
\]
we have $ c \phi_{Q,k,n} \in \Phi_{n}^{4\alpha}(Q)$ up to a bounded multiplicative constant $c$ independent of $Q$, $k$, and $n$.

Now we can write 
\begin{multline*}
\abs{ \int_{\R^{3 \times d}} \delta_0(\xi_1+\xi_2+\xi_3) \widehat{f}_1(\xi_1)\widehat{f}_2(\xi_2)\widehat{f}_3(\xi_3) m(L^{-1}\xi) \, \diff \xi } \\
\leq \sum_{k \in \Z^{3 \times d}} \abs{a_k} \abs{ \sum_{Q \in \mathcal{W}} \int_{\R^{3 \times d}} \delta_0(\xi_1+\xi_2+\xi_3)  \prod_{n=1}^{3} m_{Q,k,n} (L^{-1}_n \xi_n ) \widehat{f}_n(\xi_n) \, \diff  \xi } \\
\lesssim \sum_{k \in \Z^{3 \times d}} \abs{ a_k } (1 + \abs{k})^{12\alpha} \abs{ \sum_{Q \in \mathcal{W}} \int_{\R^{d}} \prod_{n=1}^{3} [ \phi_{Q,k,n}*f_n(x) ] \, \diff x } .
\end{multline*} 
By Proposition~\ref{whitneyprop},
this is bounded by 
\[
C \prod_{n=1}^{3}\abs{ E_n }^{1/q_n} \sum_{k \in \Z^{3 \times d}} \abs{ a_k } (1 + \abs{k})^{12\alpha} .
\]
By smoothness of the symbol $m$ and the upper bound on $\alpha$,
we know $\abs{ a_k } \le C \abs{ k }^{-12\alpha-3 d-1}$,
and hence the proof is complete. \qed

\bibliography{ref}

\begin{thebibliography}{10}

\bibitem{MR3161332}
F.~Bernicot.
\newblock Fiber-wise {C}alder\'{o}n-{Z}ygmund decomposition and application to a bi-dimensional paraproduct.
\newblock {\em Illinois J. Math.}, 56(2):415--422, 2012.

\bibitem{MR199631}
L.~Carleson.
\newblock On convergence and growth of partial sums of {F}ourier series.
\newblock {\em Acta Math.}, 116:135--157, 1966.

\bibitem{MR3000982}
C.~Demeter and S.~Z. Gautam.
\newblock Bilinear {F}ourier restriction theorems.
\newblock {\em J. Fourier Anal. Appl.}, 18(6):1265--1290, 2012.

\bibitem{MR2597511}
C.~Demeter and C.~Thiele.
\newblock On the two-dimensional bilinear {H}ilbert transform.
\newblock {\em Amer. J. Math.}, 132(1):201--256, 2010.

\bibitem{MR3453362}
F.~Di~Plinio and C.~Thiele.
\newblock Endpoint bounds for the bilinear {H}ilbert transform.
\newblock {\em Trans. Amer. Math. Soc.}, 368(6):3931--3972, 2016.

\bibitem{MR3312633}
Y.~Do and C.~Thiele.
\newblock {$L^p$} theory for outer measures and two themes of {L}ennart {C}arleson united.
\newblock {\em Bull. Amer. Math. Soc. (N.S.)}, 52(2):249--296, 2015.

\bibitem{MR3488377}
P.~Durcik.
\newblock An {$L^4$} estimate for a singular entangled quadrilinear form.
\newblock {\em Math. Res. Lett.}, 22(5):1317--1332, 2015.

\bibitem{MR3683098}
P.~Durcik.
\newblock {${L}^p$} estimates for a singular entangled quadrilinear form.
\newblock {\em Trans. Amer. Math. Soc.}, 369(10):6935--6951, 2017.

\bibitem{MR340926}
C.~Fefferman.
\newblock Pointwise convergence of {F}ourier series.
\newblock {\em Ann. of Math. (2)}, 98:551--571, 1973.

\bibitem{fraccaroli2022phase}
M.~Fraccaroli, O.~Saari, and C.~Thiele.
\newblock Phase space localizing operators, 2022.
\newblock Available at arXiv:2210.16164.

\bibitem{MR2445437}
L.~Grafakos.
\newblock {\em Classical {F}ourier analysis}, volume 249 of {\em Graduate Texts in Mathematics}.
\newblock Springer, New York, second edition, 2008.

\bibitem{MR2113017}
L.~Grafakos and X.~Li.
\newblock Uniform bounds for the bilinear {H}ilbert transforms. {I}.
\newblock {\em Ann. of Math. (2)}, 159(3):889--933, 2004.

\bibitem{MR2197068}
L.~Grafakos and X.~Li.
\newblock The disc as a bilinear multiplier.
\newblock {\em Amer. J. Math.}, 128(1):91--119, 2006.

\bibitem{MR0238019}
R.~A. Hunt.
\newblock On the convergence of {F}ourier series.
\newblock In {\em Orthogonal {E}xpansions and their {C}ontinuous {A}nalogues ({P}roc. {C}onf., {E}dwardsville, {I}ll., 1967)}, pages 235--255. Southern Illinois Univ. Press, Carbondale, IL, 1968.

\bibitem{MR0942274}
S.~Janson.
\newblock On interpolation of multilinear operators.
\newblock In {\em Function spaces and applications ({L}und, 1986)}, volume 1302 of {\em Lecture Notes in Math.}, pages 290--302. Springer, Berlin, 1988.

\bibitem{MR2990138}
V.~Kova\v{c}.
\newblock Boundedness of the twisted paraproduct.
\newblock {\em Rev. Mat. Iberoam.}, 28(4):1143--1164, 2012.

\bibitem{MR3482272}
V.~Kova\v{c}, C.~Thiele, and P.~Zorin-Kranich.
\newblock Dyadic triangular {H}ilbert transform of two general functions and one not too general function.
\newblock {\em Forum Math. Sigma}, 3:Paper No. e25, 27, 2015.

\bibitem{MR1491450}
M.~Lacey and C.~Thiele.
\newblock {$L^p$} estimates on the bilinear {H}ilbert transform for {$2<p<\infty$}.
\newblock {\em Ann. of Math. (2)}, 146(3):693--724, 1997.

\bibitem{MR2320411}
X.~Li.
\newblock Uniform bounds for the bilinear {H}ilbert transforms. {II}.
\newblock {\em Rev. Mat. Iberoam.}, 22(3):1069--1126, 2006.

\bibitem{MR2413217}
X.~Li.
\newblock Uniform estimates for some paraproducts.
\newblock {\em New York J. Math.}, 14:145--192, 2008.

\bibitem{MR3337797}
V.~Lie.
\newblock On the boundedness of the bilinear {H}ilbert transform along ``non-flat'' smooth curves.
\newblock {\em Amer. J. Math.}, 137(2):313--363, 2015.

\bibitem{MR2701349}
C.~Muscalu.
\newblock {\em L(p) estimates for multilinear operators given by singular symbols}.
\newblock ProQuest LLC, Ann Arbor, MI, 2000.
\newblock Thesis (Ph.D.)--Brown University.

\bibitem{MR3231215}
C.~Muscalu.
\newblock Calder\'{o}n commutators and the {C}auchy integral on {L}ipschitz curves revisited: {I}. {F}irst commutator and generalizations.
\newblock {\em Rev. Mat. Iberoam.}, 30(2):727--750, 2014.

\bibitem{MR3255002}
C.~Muscalu.
\newblock Calder\'{o}n commutators and the {C}auchy integral on {L}ipschitz curves revisited {II}. {T}he {C}auchy integral and its generalizations.
\newblock {\em Rev. Mat. Iberoam.}, 30(3):1089--1122, 2014.

\bibitem{MR3293439}
C.~Muscalu.
\newblock Calder\'{o}n commutators and the {C}auchy integral on {L}ipschitz curves revisited {III}. {P}olydisc extensions.
\newblock {\em Rev. Mat. Iberoam.}, 30(4):1413--1437, 2014.

\bibitem{MR1979774}
C.~Muscalu, T.~Tao, and C.~Thiele.
\newblock Uniform estimates on multi-linear operators with modulation symmetry.
\newblock {\em J. Anal. Math.}, 88:255--309, 2002.

\bibitem{MR1945289}
C.~Muscalu, T.~Tao, and C.~Thiele.
\newblock Uniform estimates on paraproducts.
\newblock {\em J. Anal. Math.}, 87:369--384, 2002.

\bibitem{MR2997005}
R.~Oberlin and C.~Thiele.
\newblock New uniform bounds for a {W}alsh model of the bilinear {H}ilbert transform.
\newblock {\em Indiana Univ. Math. J.}, 60(5):1693--1712, 2011.

\bibitem{MR4566711}
O.~Saari and C.~Thiele.
\newblock Paraproducts for bilinear multipliers associated with convex sets.
\newblock {\em Math. Ann.}, 385(3-4):2013--2036, 2023.

\bibitem{MR1933076}
C.~Thiele.
\newblock A uniform estimate.
\newblock {\em Ann. of Math. (2)}, 156(2):519--563, 2002.

\bibitem{gennady_uniform}
G.~Uraltsev and M.~Warchalski.
\newblock The full range of uniform bounds for the bilinear {H}ilbert transform, 2022.
\newblock Available at arXiv:2205.09851.

\bibitem{warchalski2019uniform}
M.~Warchalski.
\newblock {\em Uniform estimates in one-and two-dimensional time-frequency analysis}.
\newblock PhD thesis, Universit{\"a}ts-und Landesbibliothek Bonn, 2019.

\end{thebibliography}
\bibliographystyle{abbrv}

\end{document}